\newtheorem{thm}{Theorem}[section]
\newtheorem*{thm*}{Theorem}
\newtheorem{lem}[thm]{Lemma}
\newtheorem{prop}[thm]{Proposition}
\newtheorem{coro}[thm]{Corollary}
\theoremstyle{definition}
\newtheorem{defn}[thm]{Definition}
\newtheorem{rem}[thm]{Remark}
\newtheorem{nota}[thm]{Notation}
\numberwithin{equation}{section}
\newcommand{\codim}{\textnormal{codim}}
\renewcommand{\exp}{\textnormal{exp}}
\renewcommand{\o}[1]{\overline{ #1 }}
\newcommand{\gl}{\textnormal{GL}}
\renewcommand{\epsilon}{\varepsilon}
\renewcommand{\phi}{\varphi}
\newcommand{\stab}{\textnormal{Stab}}
\newcommand{\trop}{\textnormal{Trop}}
\newcommand{\aff}{\textnormal{aff}}
\newcommand{\Log}{\textnormal{Log}}
\newcommand{\am}{\mathcal{A}_W}
\newcommand{\ctimes}{\mathbb{C}^\times}
\newcommand{\ini}{\textnormal{in}}
\newcommand{\relint}{\textnormal{relint}}
\renewcommand{\star}{\textnormal{star}}
\newcommand{\supp}{\textnormal{Supp}}
\newcommand{\val}{\textnormal{val}}
\newcommand{\res}{\textnormal{res}}
\newcommand{\lin}{\textnormal{lin}}
\begin{document}

\title{Likely intersections in powers of the multiplicative group}

\author[G. A. Dill]{Gabriel A. Dill}
\address[G. A. Dill]{Institut de Math\'ematiques, Universit\'e de Neuch\^atel, Rue Emile-Argand 11, 2000 Neuch\^atel, Switzerland}
\email{gabriel.dill@unine.ch}

\author[F. Gallinaro]{Francesco Gallinaro}
\address[F. Gallinaro]{Dipartimento di Matematica, Universit\`a di Pisa, Largo Bruno Pontecorvo 5, 56127, Pisa, Italy}
\email{francesco.gallinaro@dm.unipi.it}

\date{\today}

\subjclass[2020]{14L10, 03C98, 11U09, 14T90.}
\keywords{Equidistribution, Exponential-Algebraic Closedness, rotundity, tropical geometry, unlikely intersections.}
\date{\today}

\begin{abstract}
	We derive two finiteness properties as consequences of the \emph{geometrical non-degeneracy} of an algebraic subvariety $W$ of a power of the multiplicative group, concerning the intersections of $W$ with translates of a subtorus $H$ of dimension greater than or equal to the codimension of $W$. The first one is that every translate of $H$ intersects $W$, unless $H$ is contained in one of finitely many proper subtori depending only on $W$. The second one is that every translate of $H$ by a torsion point intersects $W$, unless the translate is contained in one of finitely many proper algebraic subgroups, again depending only on $W$. We use methods from tropical geometry and equidistribution, as well as some very mild model theory.
\end{abstract}

\maketitle

\tableofcontents
\newpage
\section{Introduction}

Let $K$ be an algebraically closed field of characteristic $0$. We denote by $\mathbb{G}_{m,K}$ the multiplicative group over $K$ and identify $\mathbb{G}_{m,K}^n(K)$ with $(K^\times)^n$ as usual. An irreducible algebraic subvariety $W$ of $\mathbb{G}_{m,K}^n$ is called \textit{geometrically non-degenerate} if for every algebraic subgroup $J$ of $\mathbb{G}_{m,K}^n$, with associated algebraic quotient map $\pi_J:\mathbb{G}_{m,K}^n \twoheadrightarrow \mathbb{G}_{m,K}^n/J$, we have that $\dim \pi_J(W)=\min\{\dim W, n-\dim J\}$. For example, it is easy to see that if $W$ is a curve, then it is geometrically non-degenerate if and only if it is not contained in any coset of a proper algebraic subgroup of $\mathbb{G}_{m,K}^n$; and that if $W$ is a hypersurface, then it is geometrically non-degenerate if and only if there is no infinite algebraic subgroup $J$ of $\mathbb{G}_{m,K}^n$ such that $J \cdot W=W$. 

We now assume that $K = \mathbb{C}$ and write $\mathbb{G}^n_m$ instead of $\mathbb{G}^n_{m,\mathbb{C}}$. Given equations defining $W$, the dimension condition needs to be checked for only finitely many algebraic subgroups $J$ and these can be determined effectively; see \cite[Theorem 1.4]{BMZ07} and \cite[Sections 1.3.3 and 1.3.4]{ZannierBook}. One can also check that $W$ is geometrically non-degenerate if and only if for every subtorus $J$ of $\mathbb{G}_{m}^n$ of dimension $n - \dim W$ we have that $\dim \pi_J(W)=\dim W$. Note that, since $\pi_J(W)$ is in general just constructible and not necessarily Zariski closed, this does not imply that $W$ intersects every translate of $J$. Let us remark that the notion of geometrical non-degeneracy is not new: to the best of our knowledge, it made its first appearance in 1981 in \cite{Ran1980/81} (for subvarieties of abelian varieties, but the translation to $\mathbb{G}_m^n$ is immediate).

In this paper, we prove the following results on geometrically non-degenerate subvarieties of $\mathbb{G}_m^n$.

\begin{thm}\label{thm:main}
	Let $W \subseteq \mathbb{G}^n_{m}$ be an irreducible geometrically non-degenerate subvariety.
	
	Then there exists a finite set $\mathcal{H} = \{H_1,\hdots,H_N\}$ such that $H_i \subsetneq \mathbb{G}^n_{m}$ is a subtorus for all $H_i \in \mathcal{H}$ and such that for every subtorus $H \subseteq \mathbb{G}^n_{m}$ with $\dim H + \dim W \geq n$, one of the following holds:
	\begin{enumerate}
		\item[(i)] For every $z \in (\ctimes)^n$, $W(\mathbb{C}) \cap z \cdot H(\mathbb{C}) \neq \emptyset$ or
		\item[(ii)] $H \subseteq H_i$ for some $H_i \in \mathcal{H}$.
	\end{enumerate}
\end{thm}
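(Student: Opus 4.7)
The plan is to reformulate condition (i) as a surjectivity statement and then analyze its failure via the boundary behaviour of $W$ in a toric compactification combined with tropical intersection theory.

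First, for any subtorus $H$ and any $z \in (\ctimes)^n$, the intersection $W \cap zH$ is non-empty if and only if $z \in W \cdot H^{-1} = W \cdot H$ (using $H = H^{-1}$ since $H$ is a subgroup). Hence condition (i) is equivalent to $W \cdot H = \mathbb{G}^n_m$, i.e., to the composition $\pi_H|_W \colon W \hookrightarrow \mathbb{G}^n_m \twoheadrightarrow \mathbb{G}^n_m/H$ being surjective on $\mathbb{C}$-points. By geometric non-degeneracy and $\dim H \geq n - \dim W$, the map $\pi_H|_W$ is already dominant; failure of (i) therefore means that $\pi_H(W)$ omits a non-empty constructible subset of $\mathbb{G}^n_m/H$ of dimension strictly less than $n - \dim H$.

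To detect this omission, I would fix a smooth projective toric compactification $X \supseteq \mathbb{G}^n_m$ whose fan refines the recession fan of $\trop(W)$, so that the closure $\overline{W} \subseteq X$ meets each torus-invariant stratum $O_\sigma$ in the expected dimension. The boundary $\overline{W} \setminus W$ then decomposes into finitely many irreducible subvarieties $B_1, \ldots, B_N$, each contained in some stratum $O_{\sigma_i}$. Since $X$ is proper and $\dim \overline{zH} + \dim \overline{W} \geq n$, the closures $\overline{zH}$ and $\overline{W}$ always intersect; the failure of (i) at some $z$ forces this intersection to be confined to the boundary, so some $B_i$ must meet $\overline{zH}$. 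Tropical intersection theory then translates ``the entire intersection $\overline{W} \cap \overline{zH}$ lies in the boundary'' into a combinatorial condition on $H$: the cocharacter data of $H$ must be compatible with $\sigma_i$ in a specific way, forcing $H$ into a proper subtorus $T_i \subsetneq \mathbb{G}^n_m$ that depends only on $\sigma_i$ and $B_i$. Taking $\mathcal{H} := \{T_1, \ldots, T_N\}$ produces the desired finite family, with properness of each $T_i$ coming from the geometric non-degeneracy of $W$ (otherwise $B_i$ would violate the dimensional constraint for $\pi_{T_i}(W)$).

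The main obstacle is the step from ``boundary intersection in $O_{\sigma_i}$'' to the precise containment $H \subseteq T_i$. Tropical intersection theory bounds the weighted contribution of each stratum to the scheme-theoretic intersection, but forcing \emph{all} the intersection weight into the boundary (rather than merely part of it) is a stringent constraint that is not automatic from the combinatorics alone. This is where the equidistribution input enters: one needs archimedean control over translates $zH$ for arbitrary (not only torsion or algebraic) $z \in (\ctimes)^n$ in order to rule out ``limit'' configurations where interior intersections degenerate under a sequence of translates. The mild model theory should then supply a compactness or definability argument converting ``finitely many combinatorial obstruction types'' into ``finitely many proper subtori containing every bad $H$''; without such uniformity, one would only obtain an a priori countable family.
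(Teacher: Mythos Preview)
Your reformulation of condition (i) as surjectivity of $\pi_H|_W$ is correct, and reaching for a toric compactification is natural. But the gap you flag is genuine and, as far as I can see, is not closed by the mechanism you sketch. The obstruction ``$\overline{W}\cap\overline{zH}$ lies entirely in the boundary'' depends on both $H$ and the particular translate $z$, while the conclusion you need is a constraint on $H$ alone. Stable tropical intersection numbers do not see whether the classical intersection has drifted to infinity, and the tropicalization of $zH$ is the linear space $\mathrm{Lie}(H)_{\mathbb{R}}$ independently of $z$, so tropical intersection theory gives the same picture for every translate and cannot single out the bad ones. Worse, the combinatorial condition one does extract from a boundary contribution in a stratum $O_{\sigma_i}$ is of the form $\sigma_i\subseteq \mathrm{Lie}(H)_{\mathbb{R}}$, i.e.\ $H$ \emph{contains} a fixed one-parameter subgroup; this is the opposite type of constraint from $H\subseteq H_i$. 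Your appeal to equidistribution to ``rule out limit configurations'' is too vague to bridge this.

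The paper's route is quite different. Rather than compactifying, it proves an analytic uniformity statement (Theorem~\ref{thm:main-tropical}): there is a single $\epsilon>0$, depending only on $W$, such that for every linear subspace $L$ of dimension $\codim W$ defined over $\mathbb{R}$ and every $z$, the set $z^{-1}\cdot W(\mathbb{C})/\exp(L(\mathbb{C}))$ contains a Euclidean ball of radius $\epsilon$ centred somewhere on $\mathbb{S}_1^n(\mathbb{C})$. Tropical geometry enters here through the characterization of rotundity in Proposition~\ref{prop:characterize-rotund} (amoeba and tropicalization conditions), and the uniformity in $L$ and $z$ is obtained by a compactness argument carried out in a proper elementary extension of $\mathbb{R}_{\exp,\sin}$; this is where the ``mild model theory'' actually lives, not in packaging obstruction types. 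Equidistribution is then used in a concrete way (Lemma~\ref{lem:subgroups_equidistribute}, via Bilu): unless $H$ lies in one of finitely many proper subtori determined by $\epsilon$, the compact group $H(\mathbb{C})\cap\mathbb{S}_1^n(\mathbb{C})$ contains torsion points with large Galois orbits and hence meets every $\epsilon$-ball on $\mathbb{S}_1^n(\mathbb{C})$. Applying this to the ball produced by Theorem~\ref{thm:main-tropical} and using $H$-invariance of $W\cdot H\cdot z^{-1}$ gives $1\in W\cdot H\cdot z^{-1}$, i.e.\ $W\cap zH\neq\emptyset$. So equidistribution controls the density of $H\cap\mathbb{S}_1^n$ in the compact torus, not degenerations of translates.
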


\begin{thm}\label{thm:main-two}
	Let $W \subseteq \mathbb{G}^n_{m}$ be an irreducible geometrically non-degenerate subvariety.
	
	Then there exists a finite set $\mathcal{G} = \{G_1,\hdots,G_N\}$ such that $G_i \subsetneq \mathbb{G}^n_{m}$ is an algebraic subgroup for all $G_i \in \mathcal{G}$ and such that for every subtorus $H \subseteq \mathbb{G}^n_{m}$ with $\dim H + \dim W \geq n$ and for every torsion point $\zeta \in \mathbb{G}^n_{m}(\mathbb{C})$, one of the following holds:
	\begin{enumerate}
		\item[(i)] $W(\mathbb{C}) \cap \zeta \cdot H(\mathbb{C}) \neq \emptyset$ or
		\item[(ii)] $\zeta \cdot H \subseteq G_i$ for some $G_i \in \mathcal{G}$.
	\end{enumerate}
\end{thm}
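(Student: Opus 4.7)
My strategy is to derive Theorem~\ref{thm:main-two} from Theorem~\ref{thm:main} by combining Laurent's Manin--Mumford theorem for $\mathbb{G}_m^n$ with Bilu's arithmetic equidistribution of Galois orbits of torsion points. Theorem~\ref{thm:main} yields a finite set $\mathcal{H} = \{H_1,\ldots,H_N\}$ of proper subtori; when $H \not\subseteq H_i$ for any $i$, Theorem~\ref{thm:main} already ensures $W \cap \zeta H \neq \emptyset$ for every $\zeta$, torsion or not. So it suffices to produce, for each $i$, a finite collection $\mathcal{G}_i$ of proper algebraic subgroups handling all $H \subseteq H_i$ satisfying $\dim H + \dim W \geq n$; then $\mathcal{G} := \bigcup_i \mathcal{G}_i$ works.

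\textbf{Quotient picture and easy case.} Fix $i$ and set $\pi := \pi_{H_i}: \mathbb{G}_m^n \twoheadrightarrow T_i := \mathbb{G}_m^n/H_i$. Since $\dim W + \dim H_i \geq \dim W + \dim H \geq n$, geometric non-degeneracy gives $\dim \pi(W) = \dim T_i$, so $T_i \setminus \pi(W)$ is contained in a proper Zariski closed subvariety $Y_i \subsetneq T_i$. Put $H_i \in \mathcal{G}_i$ to handle $\zeta \in H_i$. If $\pi(\zeta) \notin \pi(W)$, then $W \cap \zeta H_i = \emptyset$ and a fortiori $W \cap \zeta H = \emptyset$; Laurent's theorem on $Y_i$ then locates the torsion point $\pi(\zeta)$ in one of finitely many torsion cosets $\xi_{i,j} K_{i,j}$ of proper subtori $K_{i,j} \subsetneq T_i$. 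Lifting $\xi_{i,j}$ to a torsion $\widetilde{\xi}_{i,j} \in \mathbb{G}_m^n$, the proper algebraic subgroup $\langle \widetilde{\xi}_{i,j} \rangle \cdot \pi^{-1}(K_{i,j})$ contains $\zeta H$, and these finitely many subgroups join $\mathcal{G}_i$.

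\textbf{Hard case via equidistribution.} The remaining scenario has $\pi(\zeta) \in \pi(W)$ yet $W \cap \zeta H = \emptyset$: the non-empty fiber $W \cap \zeta H_i$ somehow avoids the sub-coset $\zeta H$. Suppose, for contradiction, that no finite $\mathcal{G}_i$ accounts for this; then we extract a sequence $(\zeta_k, H_k)$ with $H_k \subseteq H_i$, $W \cap \zeta_k H_k = \emptyset$, and such that $\zeta_k H_k$ ultimately avoids every proper algebraic subgroup of $\mathbb{G}_m^n$. Applying this escape condition to $G = \pi^{-1}(K)$ for each proper $K \subsetneq T_i$ shows that $\pi(\zeta_k)$ is a strict sequence of torsion points in $T_i$. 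After spreading out we may assume $W$ is defined over a number field $F$; since every subtorus $H_k$ is defined over $\mathbb{Q}$, the emptiness $W \cap \zeta_k H_k = \emptyset$ is Galois-invariant, so the full Galois orbit of $\zeta_k$ lies in the complement $\mathbb{G}_m^n \setminus W H_k$. Bilu's theorem applied to $\pi(\zeta_k)$ in $T_i$ yields weak convergence of the normalised counting measures on these orbits to the Haar measure on the compact unit torus of $T_i(\mathbb{C})$.

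\textbf{Main obstacle.} The principal difficulty, and the step I expect to require the bulk of the effort, is extracting a contradiction from this equidistribution when the sets $W H_k$ genuinely depend on $k$: one cannot apply weak convergence directly against a single fixed test set. I would attempt two complementary routes. First, combine the tropical geometry of $W$ developed in the proof of Theorem~\ref{thm:main} with a Noetherian extraction, passing to a subsequence along which the $H_k$ tropically converge to a common limit, to produce a single proper Zariski closed subvariety of $T_i$ (Haar-null on the compact torus) that eventually contains the full Galois orbit of $\pi(\zeta_k)$, contradicting Bilu. Alternatively, apply Theorem~\ref{thm:main-two} inductively on $n$ to the generic fibers $W_t \subseteq H_i$ for $t \in \pi(W)$, after verifying via a dimension count that generic $W_t$ inherit geometric non-degeneracy from $W$; constructibility in $t$ together with Bilu should upgrade the pointwise bound to one uniform in $t$, giving the desired finite $\mathcal{G}_i$.
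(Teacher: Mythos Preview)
Your strategy differs substantially from the paper's, and the ``hard case'' you flag is a genuine gap that your proposed fixes do not close.

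The paper does \emph{not} derive Theorem~\ref{thm:main-two} from Theorem~\ref{thm:main}. Instead, both theorems are deduced in parallel from the uniform tropical estimate (Theorem~\ref{thm:main-tropical}): there is a single $\epsilon>0$, depending only on $W$, such that for \emph{every} subtorus $H$ with $\dim H+\dim W\geq n$ and every translate, the set $W(\mathbb{C})\cdot H(\mathbb{C})$ contains a Euclidean ball of radius $\epsilon$ centred on $\mathbb{S}_1^n(\mathbb{C})$. The equidistribution input (Lemma~\ref{lem:subgroups_equidistribute}) then says: unless $\zeta H$ lies in one of finitely many proper algebraic subgroups determined by $n$, $\epsilon$, and a field of definition $K$ for $W$, some Galois conjugate $\sigma(\zeta)H$ meets every $\epsilon$-ball on $\mathbb{S}_1^n(\mathbb{C})$, hence meets $W\cdot H$, so $W\cap\sigma(\zeta)H\neq\emptyset$; applying $\sigma^{-1}$ gives $W\cap\zeta H\neq\emptyset$. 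The uniformity of $\epsilon$ in $H$ is exactly what lets the paper avoid the moving-target problem you identify.

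Your route tries to recover this uniformity after the fact, and neither proposed repair works. The induction on $n$ via generic fibres $W_t\subseteq H_i$ fails for the reason the paper states explicitly in the introduction: geometrical non-degeneracy is not inherited by subvarieties or fibres (the example $x+2y+z=1$ in $\mathbb{G}_m^3$ has geometrically degenerate intersection with the subtorus $z=1$). So you cannot apply the theorem inductively to the $W_t$. The ``tropical convergence of $H_k$'' idea is too vague to evaluate, but note that a limit of subtori in the Grassmannian need not be rational, and the sets $WH_k$ need not converge to anything with Haar-null image in $T_i$; making this precise would amount to reproving the uniform $\epsilon$ of Theorem~\ref{thm:main-tropical}, which is the real content. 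Finally, even the ``easy case'' is methodologically backwards relative to the paper: Manin--Mumford for tori is derived there \emph{from} Theorem~\ref{thm:main-two} (Proposition~\ref{prop:proof-manin-mumford}), not used as an input.
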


The interest of these results comes from their connections with two areas of very active research which sit at the border between model theory and Diophantine geometry, namely \textit{unlikely intersections} and \textit{exponential algebraic-closedness}. 

\subsection{Unlikely (and likely) intersections}

In the seminal paper \cite{BMZ99}, Bombieri, Masser, and Zannier proved that the intersection of a curve in an algebraic torus (i.e. a power of the multiplicative group) with the union of all algebraic subgroups of codimension $2$ is finite provided that the curve is defined over the algebraic numbers and not contained in a coset of a proper subtorus. Intuitively, the intersection of a curve with an algebraic subgroup of codimension at least $2$ should usually be empty for dimension reasons and the result of Bombieri, Masser, and Zannier states that, under the above-mentioned hypotheses on the curve, this holds true apart from at most finitely many exceptional points.

Since 1999, there has been a lot of research on generalizing the result of Bombieri, Masser, and Zannier in various ways: replacing the curve by a higher-dimensional subvariety, weakening the hypotheses on the curve, or replacing the ambient algebraic torus by another ambient variety with a collection of ``special" subvarieties similar to the collection of torsion cosets, i.e. of irreducible components of algebraic subgroups of the torus.

In particular, it follows from \cite{Maurin08} and \cite{BMZ08} that the result of Bombieri, Masser, and Zannier continues to hold for a curve defined over $\mathbb{C}$ that is not contained in any proper algebraic subgroup of the torus and one can show that this last hypothesis is actually necessary.

Such results are collected under the umbrella term of ``unlikely intersections" since they concern intersections that are expected to be empty for dimension reasons. Unlikely intersections have also been studied in abelian varieties and semiabelian varieties as well as in pure and mixed Shimura varieties. All of these ambient varieties possess a collection $\mathcal{S}$ of special subvarieties $S$. It turns out that, in order to understand unlikely intersections, it is advantageous to look at the larger class of intersections of atypical dimension: for a fixed subvariety $W$ and varying $S \in \mathcal{S}$, we call a component $A$ of an intersection $W \cap S$ atypical if $\dim A > \dim W - \codim S$. The intersection is unlikely if $\dim W - \codim S < 0$.

General conjectures, due to Zilber \cite{Zil02} (for semiabelian varieties), Pink \cite{PinkUnpubl} (for mixed Shimura varieties), and Bombieri, Masser, and Zannier \cite{BMZ07} (for algebraic tori), predict that unlikely or atypical intersections are rare in one way or another. Pink's and Zilber's conjectures are slightly different, but actually equivalent in many cases (see \cite[Section 12]{BD2}). In the case of algebraic tori, which is the relevant one for the purposes of this article, Bombieri, Masser, and Zannier have shown in the Appendix to \cite{BMZ08} that all formulations of the conjecture are equivalent. The Zilberian formulation is equivalent to the statement that a given variety has at most finitely many atypical subvarieties that are maximal with this property with respect to inclusion. There is an equivalent formulation that is more similar to our theorem: every atypical subvariety is contained in one of finitely many proper algebraic subgroups of the algebraic torus. The analogy is not perfect: the proper algebraic subgroups contain the atypical intersections and not the torsion cosets with which one intersects, and indeed the latter would be impossible.

Typically, the known results in the direction of these conjectures are confined to cases where either the dimension or the codimension is low (curves, hypersurfaces, subvarieties of codimension $2$). However, more is known for geometrically non-degenerate subvarieties of algebraic tori and abelian varieties, see \cite[Corollary 1.5]{Habegger_2009}, \cite[Th\'eor\`eme 1.1]{Maurin11}, and \cite[Theorem 9.15(iv)]{HP16}.

More recently, people have started to also consider likely intersections, usually proving that the union of a certain class of likely intersections in a given irreducible subvariety is (analytically or Zariski) dense in that subvariety (of course, the intersection with the whole ambient variety is always likely, so this one will always be excluded from the class under consideration -- the most natural option is to consider special subvarieties whose codimension is equal to the dimension of the fixed subvariety). See \cite{ACZ20, G20, G21Corr, AsvinG, EterovicScanlon, TT23, BKU24}. See also the earlier \cite[Theorem 1.2]{Habegger_2010} and \cite[Remark 3.4.5]{ZannierBook} for likely intersections in the square of the modular curve $Y(1)$.

For an irreducible subvariety $W$ of an algebraic torus $\mathbb{G}_m^n$, Zariski density in $W$ of the intersection of $W$ with the union of all algebraic subgroups of codimension $\dim W$ is easy to see: choose $\dim W$ algebraically independent monomials that remain algebraically independent when restricted to $W$ and use that the image of the induced morphism from $W$ to $\mathbb{G}^{\dim W}_m$ is constructible and Zariski dense. Note that analytic density may fail: for example, the set of torsion points is not analytically dense in $\mathbb{G}_m^n$.

In this article, our goal is slightly different: we want to show that, under suitable hypotheses, almost every intersection that one expects to be non-empty for dimension reasons is actually non-empty. More concretely, for a geometrically non-degenerate subvariety $W$ of an algebraic torus over $\mathbb{C}$, we show in Theorem \ref{thm:main-two} that, if $W \cap T = \emptyset$ for some torsion coset $T$ of codimension at most $\dim W$, then $T$ is contained in one of finitely many proper torsion cosets. We also show in Theorem \ref{thm:main} that, if $W \cap zH = \emptyset$ for some subtorus $H$ of codimension at most $\dim W$ and some $\mathbb{C}$-point $z$, then $H$ is contained in one of finitely many proper subtori. Here, one cannot expect $zH$ to be contained in one of finitely many translates of proper subtori (unless $W$ is a curve, see Corollary \ref{cor:curvecase} below): take for example $W$ to be the geometrically non-degenerate hypersurface that is defined inside $\mathbb{G}_m^3$ by the equation $x + y + z = 1$. Then, for any $c \in \mathbb{C} \setminus \{0,1\}$, the intersection of $W$ with the coset $\{(1-c,c)\} \times \mathbb{G}_m$ is empty, but the union of all these cosets is not contained in the union of finitely many cosets of proper subtori. Note that this does not contradict our statement about torsion cosets since $c$ and $1-c$ are simultaneously roots of unity if and only if $c$ is a primitive sixth root of unity. The connection to unlikely intersections that becomes visible here will be explained in more detail later. This example also shows that Theorem \ref{thm:main-two} becomes false over the algebraic closure of a finite field since then all the points $(1-c,c)$ ($c \neq 0, 1$) are torsion points. We do not know whether Theorem \ref{thm:main} holds in positive characteristic or not.

The hypothesis of geometrical non-degeneracy is necessary for a result such as Theorem \ref{thm:main-two} to hold: if $\dim \pi_H(W) < \dim W$ for some subtorus $H$ of dimension $n - \dim W$, then $\zeta H \cap W = \emptyset$ for a Zariski dense set of torsion cosets $\zeta H$. Since geometrical non-degeneracy is not inherited by irreducible subvarieties, we can unfortunately not conclude that every unlikely non-intersection is explained by one of finitely many unlikely non-intersections. Furthermore, such a statement would just be plain false: for example, the hypersurface $W_0$ defined by the equation $x + 2y + z = 1$ in $\mathbb{G}_m^3$ has trivial stabilizer and is therefore geometrically non-degenerate, but has empty intersection with any torsion coset defined by a pair of equations $z = 1$ and $x = \zeta y$ for a root of unity $\zeta$. But any algebraic subgroup that contains infinitely many of these torsion cosets will also contain the algebraic subgroup $H$ defined by the equation $z = 1$ and the intersection of $H$ with $W_0$ is non-empty, irreducible, and geometrically degenerate as a subvariety of $H$.

As for Theorem \ref{thm:main}, a $1$-dimensional subtorus of $\mathbb{G}_m^2$ satisfies its conclusion even though it is geometrically degenerate; on the other hand, given a $1$-dimensional subtorus $H$ of $\mathbb{G}_m^3$, there are infinitely many distinct $2$-dimensional subtori which have cosets not intersecting $H$, so the conclusion is not true for arbitrary subvarieties. 

The analogues of Theorems \ref{thm:main} and \ref{thm:main-two} trivially hold if we replace $\mathbb{G}_m^n$ by a complex abelian variety $A$: if $W \subseteq A$ is geometrically non-degenerate and $B$ is an abelian subvariety of $A$ such that $\dim B + \dim W \geq \dim A$, then the image of $W$ under the quotient homomorphism $A \twoheadrightarrow A/B$ is Zariski dense and closed, so equal to $A/B$. It is a natural question whether this can be generalized further to the case of semiabelian varieties. We leave this question to future work. For now, let us just note that the analogous statement for $\mathbb{G}_a^n$ is false: the irreducible surface defined by $x^3 + y^3 + z^3 = 1$ in $\mathbb{G}_a^3$ has trivial stabilizer, but its intersection with the line of slope $(a,b,c) \neq (0,0,0)$ through $(0,0,0)$ is empty as soon as $a^3 + b^3 + c^3 = 0$. The union of all such lines is an irreducible cubic surface and therefore cannot be contained in any finite union of planes.

Such results can be regarded as geometric analogues of the arithmetic results obtained in \cite{BIR_2008, Grant_Ih_2013}; the results in \cite{BIR_2008} were the first-named author's original inspiration for this work. Several other recent results can be interpreted as density statements for certain classes of likely intersections in arithmetic schemes, e.g. \cite{Charles18, Shankar_Tang_2020, SSTT22}.

Results like Theorem \ref{thm:main-two} are simultaneously weaker and stronger than the density of some class of likely intersections: weaker since there might not exist enough algebraic subgroups to produce a dense union of a non-trivial class of likely intersections (consider for example a proper positive-dimensional subvariety of a simple abelian variety), but stronger since mere density can be achieved even if most likely intersections are empty.

\subsection{Exponential-Algebraic Closedness}

In his work on the model theory of the complex exponential function \cite{Zil05}, Zilber conjectured a Nullstellensatz-type property for systems of exponential-polynomial equations over the complex numbers: he predicted that all such systems which are solvable in certain exponential fields extending $\mathbb{C}$ should already be solvable in $\mathbb{C}$. This conjecture, now known as the \textit{Exponential-Algebraic Closedness conjecture} (EAC conjecture for short), is formulated geometrically: it states that all irreducible algebraic subvarieties of $\mathbb{G}_a^n \times \mathbb{G}_m^n$ which satisfy the geometric conditions of \textit{freeness} and \textit{rotundity} should contain a point of the form $(z,\exp(z))$, where $\mathbb{G}_a$ denotes the additive group over $\mathbb{C}$ and $\exp$ is the $n$-th Cartesian power of the complex exponential map. An algebraic subvariety $V \subseteq \mathbb{G}_a^n \times \mathbb{G}_m^n$ is called \emph{free} if no translate of its projection to $\mathbb{G}_a^n$ is contained in a proper linear subspace of $\mathbb{G}_a^n$ defined over $\mathbb{Q}$ and its projection to $\mathbb{G}_m^n$ is not contained in a coset of a proper algebraic subgroup of $\mathbb{G}_m^n$. The definition of a \emph{rotund} algebraic subvariety of $\mathbb{G}_a^n \times \mathbb{G}_m^n$ is given in Subsection \ref{subsec:rotund}; it is a condition stating that the images of $V$ under certain quotient maps satisfy appropriate dimension inequalities, for example $\dim V \geq n$.

The EAC conjecture has attracted considerable interest from people working in model theory and Diophantine geometry, and started a line of research which goes in a similar direction to that of likely intersection problems, aiming to prove that suitable conditions are sufficient to ensure the non-emptyness of the intersection between some algebraic varieties and the graphs of certain analytic functions, such as the complex exponential \cite{BM,AKM,Gal23,MM}, the exponential maps of abelian varieties \cite{Gal23b}, the modular $j$-function and other uniformizations of Shimura varieties \cite{EH,Gal,AEM, EZ} and the $\Gamma$ function \cite{EP}. An important difference is that while likely intersection problems are usually seen as problems concerning the interaction between the arithmetic and the algebro-geometric structure of certain algebraic varieties, the analytic structure plays an important role in EAC-type problems.

In \cite{Gal23}, the second-named author of the present paper established the EAC conjecture for algebraic subvarieties of $\mathbb{G}_a^n \times \mathbb{G}_m^n$ which split as a product of the form $L \times W$, where $L \leq \mathbb{G}_a^n$ is a linear subspace and $W \subseteq \mathbb{G}_m^n$ is an algebraic subvariety. The proofs employed methods from \emph{tropical geometry}, a relatively young branch of mathematics which has been described as a ``combinatorial shadow'' of algebraic geometry. The connection between EAC-type problems and this kind of methods had already been observed by Zilber in his early work on the conjecture \cite{Zil02}, but the development of tropical geometry which has taken place in the last few decades allowed to go well beyond Zilber's results. In this paper, we take this connection forward: we use tropical methods to prove results with a similar flavour to EAC-type questions, but which are purely algebraic.

Studying the intersection between an algebraic subvariety $W$ of $\mathbb{G}_m^n$ and the cosets of an algebraic subtorus $J$ is the same as studying the intersection of the graph of the exponential with algebraic varieties of the form $(z+L) \times W$, where $z \in \mathbb{G}_a^n(\mathbb{C})$ and $L$ is a linear subspace of $\mathbb{G}_a^n$ defined over $\mathbb{Q}$ such that $\exp(L(\mathbb{C}))=J(\mathbb{C})$. Of course, such an algebraic variety will never satisfy the definition of freeness if $J \neq \mathbb{G}_m^n$; hence the need to strengthen rotundity of $(z +L) \times W$ to geometrical non-degeneracy of $W$: it is a straightforward calculation that if $W$ is geometrically non-degenerate, then rotundity of $(z+L) \times W$ reduces to $\dim L + \dim W \geq n$, and therefore we have a sort of ``uniform rotundity'' as $z$ varies in $\mathbb{G}_a^n(\mathbb{C})$ and $L$ varies in the Grassmannian of linear subspaces of $\mathbb{G}_a^n$ of codimension $\dim W$. We will exploit this uniformity to prove our results, together with some equidistribution techniques.

\subsection{Strategy of the proofs}

In this subsection we briefly sketch the main steps of the proofs of the main results. Throughout this subsection $L$ will denote a linear subspace of $\mathbb{G}_a^n$ defined over $\mathbb{R}$ and $W$ an irreducible algebraic subvariety of $\mathbb{G}_m^n$ such that $\dim L + \dim W  = n$.

An important ingredient of the proofs is a characterization of  rotundity (Proposition \ref{prop:characterize-rotund}) of $L \times W$, which extends previous results from \cite{Gal23} and \cite{K19}. The main takeaway is that while rotundity is defined as an algebraic notion it has an analytic interpretation: $L \times W$ is rotund if and only if there is a Euclidean open subset $U \subseteq (L \times W)(\mathbb{C})$ such that the map $\delta:U \rightarrow (\ctimes)^n$ defined by $(\ell,w) \mapsto \frac{w}{\exp(\ell)}$ (where division is to be understood componentwise) is open in the Euclidean topology.

If $W \subseteq \mathbb{G}_m^n$ is an algebraic subvariety defined over $\mathbb{C}$, then the information about the behaviour of $W$ as some coordinates of its $\mathbb{C}$-points approach $0$ or $\infty$ is recorded by a semilinar object, the \emph{tropicalization} of $W$, denoted $\trop(W)$. This is a set of polyhedra, where, for the purposes of this introduction, a polyhedron is a subset of $\mathbb{R}^n$ defined by semilinear inequalities. Another characterization of rotundity of $L \times W$ given in Proposition \ref{prop:characterize-rotund} is that $\bigcup_{\tau \in \trop(W)} \tau+L(\mathbb{R})=\mathbb{R}^n$.

If we assume $W$ to be geometrically non-degenerate, the observations above can be strengthened to obtain a uniformity property: Theorem \ref{thm:main-tropical} shows that if $W$ is geometrically non-degenerate, then there is $\epsilon \in \mathbb{R}_{>0}$ such that for all $L'$ of dimension at least $\codim W$ and for all $z \in (\ctimes)^n$ the image of the map $\delta: L'(\mathbb{C}) \times (z \cdot W(\mathbb{C})) \rightarrow (\ctimes)^n$ defined similarly as above contains a ball of radius $\epsilon$ centered in some point of the $n$-th power of the unit circle, $\mathbb{S}_1^n(\mathbb{C})$. This is achieved through a study of the interaction between $L'(\mathbb{R})$ and $\trop(W)$ carried out in Lemma \ref{lem:neighborhood-in-grass} and Proposition \ref{prop:non-archimedean-version}. 

This will allow us to apply Bilu's Equidistribution Theorem \ref{thm:torsion_points_equidistribute} to show that if $L$ is defined over $\mathbb{Q}$, then $\exp(L(\mathbb{C})) \cap z \cdot W(\mathbb{C}) \neq \emptyset$ for all $z \in (\ctimes)^n$ as soon as no linear equation with ``small" integer coefficients is satisfied identically on $L$. The idea is that the integer coefficients of equations satisfied identically on $L$ provide a measure of ``complexity" of $L$ and, as this grows, the set $\exp(L(\mathbb{C})) \cap \mathbb{S}_1^n(\mathbb{C})$ is forced to intersect all balls of fixed radius. 

\subsection{Applications}

We will show in Theorem \ref{thm:main-acf} that Theorem \ref{thm:main} implies an analogous statement over any algebraically closed field $K$ of characteristic $0$. The same is true for Theorem \ref{thm:main-two} as well. We will then derive some consequences.

In Proposition \ref{prop:proof-manin-mumford}, we use Theorem \ref{thm:main-two} to give a new proof of the Manin-Mumford conjecture for algebraic tori, proved by Laurent in \cite{Laurent_1984}, by transforming an unlikely intersection happening into a likely intersection not happening. Given that we use equidistribution, which can also be used to prove the Manin-Mumford conjecture directly, this might not be too surprising. Nevertheless, the transformation of a problem about unlikely intersections into a problem about likely intersections seems not without interest.

If the subvariety $W$ is a curve, then we will show that Theorem \ref{thm:main} can be strengthened as follows.

\begin{coro}\label{cor:curvecase}
Let $K$ be an algebraically closed field of characteristic $0$ and let $W \subseteq \mathbb{G}^n_{m,K}$ be an irreducible curve that is not contained in any coset of a proper subtorus.
	
Then there exist finitely many subtori $H_i \subsetneq \mathbb{G}^n_{m,K}$ ($i = 1,\hdots,N$) and finitely many points $z_1, \hdots, z_N$ in $(K^\times)^n$ such that for every subtorus $H \subseteq \mathbb{G}^n_{m,K}$ with $\dim H = n-1$ and for every $z \in (K^\times)^n$, one of the following holds:
\begin{enumerate}
	\item[(i)] $W(K) \cap z \cdot H(K) \neq \emptyset$ or
	\item[(ii)] $z \cdot H = z_i \cdot H_i$ for some $i \in \{1,\hdots,N\}$.
\end{enumerate}
\end{coro}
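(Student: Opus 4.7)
The plan is to apply Theorem~\ref{thm:main}, extended to $K$ via Theorem~\ref{thm:main-acf}, in order to reduce to only finitely many candidate codimension-$1$ subtori $H$ that can ever fail case~(i), and then for each of these to analyse the associated quotient morphism $\pi_H: \mathbb{G}_{m,K}^n \twoheadrightarrow \mathbb{G}_{m,K}^n / H \cong \mathbb{G}_{m,K}$, exploiting the dichotomy that a constructible subset of $\mathbb{G}_{m,K}$ is either finite or cofinite.

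First, since $W$ is an irreducible curve not contained in any coset of a proper subtorus, the discussion at the beginning of the introduction shows that $W$ is geometrically non-degenerate. Theorem~\ref{thm:main} (via Theorem~\ref{thm:main-acf}) therefore supplies a finite collection $\mathcal{H}=\{H_1',\ldots,H_M'\}$ of proper subtori of $\mathbb{G}_{m,K}^n$ such that any subtorus $H$ with $\dim H + \dim W \geq n$ either has every translate meeting $W$ or satisfies $H \subseteq H_j'$ for some $j$. When $\dim H = n-1$, the inclusion $H \subseteq H_j'$ together with the connectedness of subtori forces $\dim H_j' = n-1$ and $H = H_j'$. So I may discard from $\mathcal{H}$ any member of codimension greater than $1$ and assume every $H_j'$ has codimension exactly $1$; the only codimension-$1$ subtori that can ever violate case~(i) are then the finitely many $H_j'$ themselves.

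Next, I would fix such an $H_j'$ and consider the quotient $\pi_{H_j'}: \mathbb{G}_{m,K}^n \twoheadrightarrow \mathbb{G}_{m,K}^n / H_j' \cong \mathbb{G}_{m,K}$. The image $\pi_{H_j'}(W)$ is a constructible subset of the one-dimensional irreducible variety $\mathbb{G}_{m,K}$, hence either finite or cofinite. It cannot be finite: irreducibility of $W$ would then force $W$ into a single fibre of $\pi_{H_j'}$, i.e.\ into a coset of $H_j'$, contradicting the hypothesis on $W$. Hence $\mathbb{G}_{m,K} \setminus \pi_{H_j'}(W) = \{a_{j,1},\ldots,a_{j,r_j}\}$ is a finite set. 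For each $a_{j,\ell}$ I would pick any preimage $z_{j,\ell} \in (K^\times)^n$ under $\pi_{H_j'}$; then for any $z \in (K^\times)^n$ we have $z H_j' \cap W(K) = \emptyset$ if and only if $\pi_{H_j'}(z) = a_{j,\ell}$ for some $\ell$, equivalently $z H_j' = z_{j,\ell} H_j'$.

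Collecting the finitely many pairs $(z_{j,\ell}, H_j')$ as $j$ and $\ell$ vary gives the list $(z_i, H_i)$ required by the corollary. I do not anticipate a genuine obstacle here: the argument is essentially a bookkeeping reduction from Theorem~\ref{thm:main} to the elementary dichotomy for constructible subsets of a one-dimensional variety, and the only step requiring actual care is verifying that the ``finite image'' branch is indeed ruled out by the hypothesis that $W$ is not contained in any coset of a proper subtorus.
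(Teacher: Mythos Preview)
Your proposal is correct and follows essentially the same approach as the paper's proof: apply Theorem~\ref{thm:main-acf} to reduce to finitely many candidate $(n-1)$-dimensional subtori, then for each such $H$ use that $\pi_H|_W$ is non-constant (hence dominant onto the one-dimensional quotient) to conclude that the complement of $\pi_H(W)$ is finite. The paper's write-up is slightly terser but the argument is the same.
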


We will also apply Theorem \ref{thm:main-acf} to prove that likely intersections are likely in a concrete probabilistic sense. Concretely, we will prove the following theorem.

\begin{thm}\label{thm:probabilistic}
Let $K$ be an algebraically closed field of characteristic $0$ and let $W \subseteq \mathbb{G}^n_{m,K}$ be a geometrically non-degenerate subvariety of dimension $d \in \{1,\hdots,n-1\}$. For $N \in \mathbb{N}$, let $\mathcal{S}(W,N) \subseteq \mathbb{Z}^{nd}$ denote the set of vectors $(a_{i,j})_{i = 1,\hdots,d;~j = 1,\hdots,n}$ such that
\[ \max_{i,j}{|a_{i,j}|} \leq N\]
and such that, for every $y = (y_1,\hdots,y_d) \in (K^\times)^d$, there exists some $x = (x_1,\hdots,x_n) \in W(K)$ with
\[ \prod_{j=1}^{n}{x_j^{a_{i,j}}} = y_i \quad (i = 1, \hdots, d).\]
Then
\[ \lim_{N \to \infty}{\frac{\# \mathcal{S}(W,N)}{(2N+1)^{nd}}} = 1.\]
\end{thm}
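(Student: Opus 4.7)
The plan is to reduce Theorem \ref{thm:probabilistic} to Theorem \ref{thm:main-acf} by producing a Zariski-generic sufficient criterion for $A \in \mathcal{S}(W,N)$. Given $A = (a_{i,j}) \in \mathbb{Z}^{nd}$, let $\phi_A \colon \mathbb{G}_{m,K}^n \to \mathbb{G}_{m,K}^d$ denote the homomorphism with components $\phi_A(x)_i = \prod_{j=1}^n x_j^{a_{i,j}}$. Unpacking the definition, $A$ belongs to $\mathcal{S}(W,N)$ exactly when $\max_{i,j} |a_{i,j}| \leq N$ and the restriction $\phi_A|_{W(K)}$ surjects onto $(K^\times)^d$. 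A first necessary condition is $\rk(A) = d$, which ensures that $\phi_A$ itself is surjective; in that case I write $T_A$ for the identity component of $\ker \phi_A$, a subtorus of dimension $n - d$ for which $\dim T_A + \dim W = n$.

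Next I would invoke Theorem \ref{thm:main-acf} applied to $W$, which yields a finite list of proper subtori $H_1, \ldots, H_s \subsetneq \mathbb{G}_{m,K}^n$ such that every subtorus $H \subseteq \mathbb{G}_{m,K}^n$ with $\dim H + \dim W \geq n$ either satisfies $W(K) \cap zH(K) \neq \emptyset$ for all $z \in (K^\times)^n$ or is contained in some $H_j$. Combining this with the first step, a sufficient condition for $A \in \mathcal{S}(W,N)$ becomes: $\max_{i,j} |a_{i,j}| \leq N$, $\rk(A) = d$, and $T_A \not\subseteq H_j$ for every $j$. Indeed, under these hypotheses every coset $z T_A$ meets $W$; since $T_A \subseteq \ker \phi_A$, each fiber $\phi_A^{-1}(\phi_A(z)) = z \cdot \ker \phi_A$ contains $z T_A$ and therefore meets $W$, so $\phi_A|_{W(K)}$ is surjective onto $(K^\times)^d$.

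It then remains to estimate the bad set $\mathcal{B} \subseteq \mathbb{Z}^{nd}$ of matrices violating one of these conditions, i.e.\ either $\rk(A) < d$, or $T_A \subseteq H_j$ for some $j$. Each such condition cuts out a proper Zariski closed subset of $M_{d \times n}(\mathbb{Q}) \cong \mathbb{A}^{nd}$: the locus $\rk(A) < d$ is the classical determinantal variety, proper since $d \leq n-1$; and $T_A \subseteq H_j$ translates, via the standard duality between subtori and saturated sublattices of $\mathbb{Z}^n$, into the condition that the nonzero sublattice $\Lambda_{H_j} \otimes \mathbb{Q}$ of characters annihilating $H_j$ be contained in the $\mathbb{Q}$-row span of $A$. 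This is a Zariski closed rank condition, and is proper because a generic rank-$d$ matrix (with $d < n$) does not have any fixed nonzero vector in its row span. Finally, the standard lattice bound---a proper Zariski closed subset of $\mathbb{A}^m$ contains $O(N^{m-1})$ integer points in $[-N,N]^m$, for instance by slicing through any defining polynomial---applied to each irreducible component of $\mathcal{B}$ yields $\#(\mathcal{B} \cap [-N,N]^{nd}) = O(N^{nd-1})$, hence $\#\mathcal{S}(W,N) \geq (2N+1)^{nd} - O(N^{nd-1})$, which gives the claimed limit. No serious obstacle is anticipated: all the analytic and tropical content is packaged inside Theorem \ref{thm:main-acf}, and the remainder is routine.
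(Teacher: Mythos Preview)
Your proposal is correct and follows essentially the same route as the paper: both reduce to Theorem~\ref{thm:main-acf}, translate the resulting subtorus containment $T_A \subseteq H_j$ into the linear-algebra condition that a fixed nonzero character vector lies in the $\mathbb{Q}$-row span of $A$, and then count the resulting bad matrices as $O(N^{nd-1})$. The only cosmetic difference is that the paper carries out the lattice-point count explicitly via a determinant-expansion argument, whereas you invoke the general Schwartz--Zippel-type bound for integer points on a proper Zariski closed set; both are straightforward.
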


While the hypothesis of geometrical non-degeneracy is used in the proof of Theorem \ref{thm:probabilistic}, it might well not be necessary for the theorem to hold. 

\subsection{Structure of the paper} The paper is organized as follows. In Section \ref{sec:preli}, the necessary preliminaries from model theory and tropical geometry are introduced; this allows us to introduce the algebraically closed valued field $\mathfrak{C}$ in which many of our proofs will take place. In Section \ref{sec:rot-and-geo} we discuss Zilber's notion of rotundity and show some equivalent characterizations of rotundity for complex varieties of the form $L \times W$, with $L \leq \mathbb{G}_a^n$ linear and defined over $\mathbb{R}$ and $W \subseteq \mathbb{G}_m^n$ equidimensional of dimension $\codim L$. We then show that the geometrical non-degeneracy of $W$ allows us to strengthen one of these characterizations to a version that is uniform in $L$. In Section \ref{sec:equi} we combine this uniformity result with an equidistribution argument to prove Theorems \ref{thm:main} and \ref{thm:main-two}. Finally, in Section \ref{sec:appli}, we discuss some applications; in particular, we prove Corollary \ref{cor:curvecase} and Theorem \ref{thm:probabilistic}.

\section{Preliminaries}\label{sec:preli}

    \subsection{Model theory} 

We will use very basic model theory in order to obtain some uniformity results, hence we use this subsection to fix some terminology. We will value conciseness over precision, and refer the reader to one of the many excellent textbooks in the area, for example \cite{hodges1997shorter}, for the details.

Recall that a \textit{language} $\mathcal{L}$ consists of symbols for constants, functions, and relations, and that an $\mathcal{L}$-\textit{formula} is an expression involving a symbol $=$ for equality, variables, the symbols in $\mathcal{L}$, and the usual logical symbols $\neg, \vee, \wedge, \exists, \forall$. A \textit{structure} in a language $\mathcal{L}$ is a set $S$ together with an assignment of the constant symbols to some elements of $S$, of the function symbols to some functions $S^n \rightarrow S$, and of the relation symbols to some subsets of $S^m$, for appropriate $n$ and $m$. A formula \textit{with parameters from $A$} for some subset $A \subseteq S$ is a formula $\phi(x,a)$ where $x,y$ are tuples of free variables (i.e., variables not appearing in the scope of a quantifier), $\phi(x,y)$ is an $\mathcal{L}$-formula, and $a$ is a tuple of elements from $A$ (of the same length as $y$).

A subset $X \subseteq S^n$ is \textit{definable} (over $A$) if there is some formula $\phi$ in $n$ free variables (with parameters from $A$) such that $X$ consists of those points of $S^n$ which satisfy $\phi$.

An \textit{elementary extension} of the structure $S$ is an $\mathcal{L}$-structure $S'$ such that $S \subseteq S'$ and every formula with parameters from $S$ which is true in $S$ is also true in $S'$. As an example, the field of complex numbers is not an elementary extension of the field of real numbers: the latter satisfies the sentence ``$\forall x \neg(x^2 = -1)$'' while the former does not. On the other hand, $\mathbb{C}$ is an elementary extension of the algebraic closure $\o{\mathbb{Q}}$ of the rationals, as can be seen for example by applying the \textit{Tarski-Vaught criterion} \autocite[Theorem 2.5.1]{hodges1997shorter} and quantifier elimination for the theory of algebraically closed fields \autocite[Theorem 2.7.3]{hodges1997shorter}.

Given a structure $S$, an elementary extension $S'$ of $S$, and a definable subset $X$ of $S^n$, we may look at the subset of $(S')^n$ defined by the same formula. This set will be denoted by $X(S')$: the geometrically-minded reader will notice the similarity with taking points of algebraic varieties in different fields containing the field of definition. 

    \subsection{Tropical geometry}

    Here we recall the necessary preliminaries from tropical geometry. Most of the material in this subsection is known and follows for example from results in \cite{MS}, but we work out full proofs when we have not been able to find them in the literature. 
    
    Throughout this subsection, we fix an algebraically closed, non-trivially valued field $(K, \val)$, with (ordered) value group $(\Gamma,+,\leq)$, valuation ring $\mathcal{O}$, maximal ideal $\mathfrak{m}$, and residue field $k \coloneqq \mathcal{O}/\mathfrak{m}$. We will assume that the valuation $\val:K^\times \rightarrow \Gamma$ is surjective. By abuse of notation, we also consider $\val$ as a map from $K$ to $\Gamma \sqcup \{\infty\}$, where $\val(0) = \infty$ and $\infty$ is larger than every element of $\Gamma$. We will denote the residue map by $\res: \mathcal{O} \twoheadrightarrow k$. We also write $\res$ for the map on the polynomial rings $\res:\mathcal{O}[x_1^{\pm 1}, \dots, x_n^{\pm 1}]\twoheadrightarrow k[x_1^{\pm 1}, \dots, x_n^{\pm 1}]$ as well as for any Cartesian power $\res: \mathcal{O}^n \twoheadrightarrow k^n$ of $\res$. Note that $\Gamma$ is divisible and $k$ is algebraically closed. In this subsection we will further assume that $\Gamma$ is an Archimedean ordered abelian group, so that it can be embedded in $\mathbb{R}$; in most cases this assumption could be removed without causing any trouble, but we keep it nonetheless since we quote many results from \cite{MS} and it is always assumed there. In our applications, however, $\Gamma$ is \emph{not} going to be Archimedean; at the end of the next subsection we will explain why we do not need to worry about this.

    We fix a \textit{splitting} of the valuation, that is, a group homomorphism $s:\Gamma \rightarrow K^\times$ such that $\val(s(\gamma))=\gamma$ for all $\gamma \in \Gamma$. This exists by \autocite[Lemma 2.1.15]{MS}. With another abuse of notation, we will use $\val$ and $s$ also to denote the Cartesian powers of these maps, that is, the maps $\val:(K^\times)^n \rightarrow \Gamma^n$ and $s:\Gamma^n \rightarrow (K^\times)^n$.

    Given vectors $u \in \mathbb{Q}^n$ and $\gamma \in \Gamma^n$, we write $\langle u, \gamma\rangle$ for the element $u_1\gamma_1+\dots+u_n \gamma_n \in \Gamma$, with the action of $\mathbb{Q}$ on $\Gamma$ defined in the obvious way. This is not to be confused with the operation $v \cdot v'$ on elements $v  = (v_1,\hdots,v_n)$, $v' = (v_1',\hdots,v_n') \in \mathbb{G}_{m,K}^n(K)$, which is given by $v \cdot v' = (v_1 \cdot v_1',\dots,v_n \cdot v_n')$. Finally, if $v \in \mathbb{G}_{m,K}^n(K)$ and $A = (a_{i,j}) \in \mathrm{Mat}_{d \times n}(\mathbb{Z})$, then $v^A=(\Pi_{i=1}^nv_i^{a_{1,i}}, \hdots, \Pi_{i=1}^nv_i^{a_{d,i}})$.

    \begin{defn}
        Let $f \in K[x_1^{\pm 1}, \dots, x_n^{\pm 1}]$ be a Laurent polynomial and let $\gamma \in \Gamma^n$. We write $f$ as $$f=\sum_{u \in S} c_u x^u,$$ where $x = (x_1, \hdots, x_n)$ and $S \subseteq \mathbb{Z}^n$ is a finite subset (we assume $c_u \neq 0$ for all $u \in S$). 

        The \textit{initial form} of $f$ with respect to $\gamma$ is $\ini_\gamma(f) \in k[x_1^{\pm 1}, \dots, x_n^{\pm 1}]$ defined as $$\ini_\gamma(f)=\sum_{u \in S'} \res(c_us(-\val(c_u)))x^u$$ where $$S':=\{u \in S \mid \val(c_u)+\langle u,\gamma \rangle \leq\val(c_{u'})+ \langle u',\gamma \rangle \, \forall u' \in S  \}.$$
    \end{defn}

    For example, if $\val(c_u)\geq 0$ for each $u \in S$ and $\val(c_{u'})$ is equal to $0$ for some $u' \in S$, then $\ini_0(f)= \res(f)$.

    Given an ideal $I \subseteq K[x_1^{\pm 1}, \hdots, x_n^{\pm 1}]$ and $\gamma \in \Gamma^n$, we define the \textit{initial ideal} $$\ini_\gamma(I) \coloneqq \langle \ini_\gamma(f) \mid f \in I \rangle \subseteq k[x_1^{\pm 1}, \hdots, x_n^{\pm 1}].$$
    
    \begin{lem}[{\autocite[Lemma 2.6.2(1)]{MS} and \cite{MSErrata}}]\label{lem:initial-is-initial}
        If $g \in \ini_\gamma(I)$, then $g=\ini_\gamma(f)$ for some $f \in I$.
    \end{lem}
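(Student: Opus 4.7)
The plan is to start from a given finite expression $g = \sum_{i=1}^N h_i \ini_\gamma(f_i)$ with $h_i \in k[x_1^{\pm 1}, \dots, x_n^{\pm 1}]$ and $f_i \in I$, and to construct an $f \in I$ with $\ini_\gamma(f) = g$ by lifting and rescaling. First I would dispose of the trivial case $g = 0$ by taking $f = 0$, and then use distributivity to reduce to the situation where each $h_i$ is a single monomial $c_i x^{u_i}$ with $c_i \in k^\times$. The key idea is that each summand $h_i \ini_\gamma(f_i)$ can be realised as the initial form of a single element of $I$ by lifting $c_i$ to $\mathcal{O}$, and that the splitting $s$ then lets me rescale these elements so that they all share a common tropical weight, after which their sum will have the desired initial form.

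Concretely, I would pick $\tilde c_i \in \mathcal{O}$ with $\res(\tilde c_i) = c_i$ (so $\val(\tilde c_i) = 0$), denote by $w_i$ the tropical weight of $f_i$ (the minimum of $\val(c) + \langle u, \gamma\rangle$ over its terms $c x^u$), and fix a common target weight $W \in \Gamma$ such as $W \coloneqq \max_i(w_i + \langle u_i, \gamma\rangle)$. Setting
\[
    \tilde F_i \coloneqq s(W - w_i - \langle u_i, \gamma\rangle) \cdot \tilde c_i \cdot x^{u_i} \cdot f_i \in I,
\]
a direct computation from the definition of $\ini_\gamma$ will show that $\tilde F_i$ has tropical weight exactly $W$ and that $\ini_\gamma(\tilde F_i) = h_i \ini_\gamma(f_i)$; this step is essentially routine once one notes that $\ini_\gamma(s(\alpha)) = 1$ for every $\alpha \in \Gamma$, that $\ini_\gamma(\tilde c_i) = c_i$, and that multiplication by a unit constant or a monomial does not cancel leading terms.

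The concluding step will be to take $f \coloneqq \sum_i \tilde F_i \in I$ and verify that $\ini_\gamma(f) = g$. Since every $\tilde F_i$ has minimum tropical weight $W$, the weight-$W$ part of $f$ is the sum of the weight-$W$ parts of the individual $\tilde F_i$; the $k$-linear ``normalised residue'' map sending a polynomial $\sum a_u x^u$ (with every monomial at weight $W$) to $\sum \res(a_u s(-\val(a_u))) x^u$ will send this sum to $\sum_i \ini_\gamma(\tilde F_i) = g$. The hard part of this final step, and the main conceptual obstacle of the whole argument, is confirming that summing the $\tilde F_i$ does not cause all weight-$W$ terms to cancel and thereby raise the minimum weight of $f$; this is ruled out precisely because $g \neq 0$ forces the normalised residue of the weight-$W$ part of $f$ to be nonzero, which is the payoff of the careful rescaling by $s$.
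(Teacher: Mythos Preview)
The paper does not supply its own proof of this lemma: it is stated with a bare citation to \cite[Lemma 2.6.2(1)]{MS} and the associated errata, and no argument is given. So there is nothing in the paper to compare your proposal against.

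Your argument is essentially correct and is the standard one. One small imprecision is worth fixing: the ``normalised residue'' map you describe, $\sum a_u x^u \mapsto \sum \res\bigl(a_u\, s(-\val(a_u))\bigr)\, x^u$, is not additive as written, because the scaling factor $s(-\val(a_u))$ depends on the coefficient and may change under addition. The clean formulation is to fix the target weight $W$ and use instead
\[
\rho_W\Bigl(\sum_u a_u x^u\Bigr) \;=\; \sum_u \res\bigl(a_u\, s(\langle u,\gamma\rangle - W)\bigr)\, x^u,
\]
defined on polynomials all of whose terms have $\gamma$-weight at least $W$. This map is genuinely additive (the scaling depends only on $u$), it annihilates terms of weight strictly greater than $W$, and it agrees with $\ini_\gamma$ on any polynomial of minimum weight exactly $W$. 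With this adjustment your final paragraph goes through cleanly: applying $\rho_W$ to $f=\sum_i \tilde F_i$ gives $\sum_i \ini_\gamma(\tilde F_i)=g$ by additivity, and $g\neq 0$ then forces $f$ to have minimum weight exactly $W$, whence $\ini_\gamma(f)=\rho_W(f)=g$. Relatedly, your intermediate claim that ``the weight-$W$ part of $f$ is the sum of the weight-$W$ parts of the $\tilde F_i$'' is not literally true at the level of $K$-coefficients (a weight-$W$ term of one $\tilde F_i$ may combine with a weight-$>W$ term of another at the same monomial), but it becomes true after passing to residues via $\rho_W$, which is all you need.
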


    \begin{defn}
        Let $W$ be an algebraic subvariety of $\mathbb{G}_{m,K}^n$, and let $I$ be the ideal of Laurent polynomials which vanish on $W$.

        For $\gamma \in \Gamma^n$, the \textit{initial variety} of $W$ with respect to $\gamma$ is the algebraic subvariety of $\mathbb{G}_{m,k}^n$ defined by $\ini_\gamma(I)$. It is denoted by $\ini_\gamma(W)$.
    \end{defn}

    \begin{lem}\label{lem:componentsofinitialform}
        Let $W$ be an algebraic subvariety of $\mathbb{G}_{m,K}^n$ with irreducible components $W_1, \hdots, W_\ell$ and let $\gamma \in \Gamma^n$. Then
            \[ \ini_\gamma(W) = \bigcup_{j=1}^{\ell} \ini_\gamma(W_j).\]
        If $W$ is irreducible, then every irreducible component of $\ini_\gamma(W)$ has dimension $\dim W$.
    \end{lem}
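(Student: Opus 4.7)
We establish the set-theoretic identity by a double inclusion and invoke standard Gr\"obner theory for the equidimensionality claim. The easy direction $\bigcup_{j=1}^{\ell}\ini_\gamma(W_j)\subseteq\ini_\gamma(W)$ is immediate: since $I(W)=\bigcap_{j}I(W_j)\subseteq I(W_j)$, every initial form of an element of $I(W)$ is also an initial form of an element of $I(W_j)$, so $\ini_\gamma(I(W))\subseteq \ini_\gamma(I(W_j))$, and passing to zero sets reverses the inclusion.

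The key tool for the reverse inclusion is the multiplicativity of the initial form: for any non-zero Laurent polynomials $f,g\in K[x_1^{\pm 1},\ldots,x_n^{\pm 1}]$, one has $\ini_\gamma(fg)=\ini_\gamma(f)\cdot\ini_\gamma(g)$. This is a direct computation with the definition: expanding $(fg)_w=\sum_{u+v=w}c_u d_v$ and tracking valuations via the splitting $s$, one sees that for each $w$ the contribution of the minimum-valuation pairs $(u,v)$ to the coefficient of $x^w$ in $\ini_\gamma(fg)$ matches the coefficient of $x^w$ in $\ini_\gamma(f)\ini_\gamma(g)$; the fact that the minimum of $\val((fg)_w)+\langle w,\gamma\rangle$ over $w$ equals $\mu_f+\mu_g$ (i.e.\ is not destroyed by global cancellation of the top-valuation terms) follows because $\ini_\gamma(f)\cdot\ini_\gamma(g)$ is non-zero in the integral domain $k[x_1^{\pm 1},\ldots,x_n^{\pm 1}]$. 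Granted multiplicativity, we argue by contradiction: if $p\in \ini_\gamma(W)(k)$ is not in $\bigcup_{j}\ini_\gamma(W_j)$, then for each $j$ we pick $g_j\in\ini_\gamma(I(W_j))$ with $g_j(p)\neq 0$, and by Lemma~\ref{lem:initial-is-initial} write $g_j=\ini_\gamma(f_j)$ for some $f_j\in I(W_j)$. The product $f:=f_1\cdots f_\ell$ vanishes on each $W_j$, hence on $W$, so $f\in I(W)$. Multiplicativity yields $\ini_\gamma(f)=g_1\cdots g_\ell$, which is non-zero at $p$, contradicting $p\in V(\ini_\gamma(I(W)))=\ini_\gamma(W)$.

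For the equidimensionality statement, we appeal to the classical fact that the initial ideal of a prime ideal in a Laurent polynomial ring over $K$ is equidimensional of the same Krull dimension as the original (whenever it is a proper ideal); this is a standard consequence of the flatness of the Gr\"obner degeneration from $I(W)$ to $\ini_\gamma(I(W))$ and is essentially contained in \cite{MS}. The main obstacle is the multiplicativity of $\ini_\gamma$, which, although elementary, requires careful bookkeeping with the splitting $s$ and the residue map; once global cancellation is excluded via the integral-domain property of $k[x_1^{\pm 1},\ldots,x_n^{\pm 1}]$, both directions of the argument drop out cleanly.
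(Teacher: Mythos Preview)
Your proof is correct and follows essentially the same argument as the paper's: both use $\ini_\gamma(I)\subseteq\ini_\gamma(I_j)$ for the easy inclusion, multiplicativity of $\ini_\gamma$ together with Lemma~\ref{lem:initial-is-initial} for the reverse inclusion via contradiction, and an appeal to \cite{MS} for the equidimensionality. The only cosmetic difference is that the paper cites \cite[Lemma~2.6.2(3)]{MS} for multiplicativity and \cite[Lemma~3.2.6]{MS} for the dimension claim rather than sketching them.
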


    \begin{proof}
        For $j = 1, \hdots, \ell$, let $I_j$ denote the prime ideal of $K[x_1^{\pm 1},\hdots,x_n^{\pm 1}]$ associated to $W_j$. Let $I$ denote the (radical) ideal associated to $W$. Then
        \[ \prod_{j=1}^{\ell}{I_j} \subseteq I \subseteq \bigcap_{j=1}^{\ell}{I_j}.\]
        It follows that $\ini_\gamma(I) \subseteq \bigcap_{j=1}^{\ell}{\ini_\gamma(I_j)}$ and so
        \begin{equation}\label{eq:initialformscontainment}
        \bigcup_{j=1}^{\ell}{\ini_\gamma(W_j)} \subseteq \ini_\gamma(W).
        \end{equation}
        
        Moreover, if $g_j \in \ini_\gamma(I_j)$ are arbitrary ($j = 1,\hdots,\ell$), then Lemma \ref{lem:initial-is-initial} implies that each $g_j$ is equal to $\ini_\gamma(f_j)$ for some $f_j \in I_j$ and so
        \[ \prod_{j=1}^{\ell}{g_j} = \ini_\gamma\left(\prod_{j=1}^{\ell}f_j\right) \in \ini_\gamma(I_1 \cdots I_\ell) \subseteq \ini_\gamma(I),\]
        where we have used \cite[Lemma 2.6.2(3)]{MS}. Hence, $\prod_{j=1}^{\ell}{\ini_\gamma(I_j)} \subseteq \ini_\gamma(I)$. Now, if 
        \[\bigcup_{j=1}^{\ell}{\ini_\gamma(W_j)} \neq \ini_\gamma(W),\]
         then, by \eqref{eq:initialformscontainment}, there would exist $x \in (\ini_\gamma(W))(K)$ and Laurent polynomials $g_j \in \ini_\gamma(I_j)$ such that $g_j(x) \neq 0$ for all $j$. But then the product of the $g_j$'s could not belong to $\ini_\gamma(I)$, a contradiction.

        The dimension statement is \cite[Lemma 3.2.6]{MS}.
    \end{proof}

    \begin{prop}\label{prop:hensel}
        Let $W \subseteq \mathbb{G}_{m,K}^n$ be an algebraic subvariety, $\gamma \in \Gamma^n$. If $v \in \ini_\gamma(W)(k)$, then there is $v' \in W(K)$ such that $\val(v')=\gamma$ and $\res(v' \cdot s(-\gamma))=v$.
    \end{prop}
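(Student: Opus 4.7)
The plan is to reduce to the case $\gamma = 0$ by a multiplicative translation, and then to lift $v$ to a $K$-point of $W$ by a Hensel/Noether-normalization argument on an integral model of $W$.

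First, I would set $W' := s(-\gamma) \cdot W$. The substitution $x \mapsto s(\gamma) \cdot x$ turns a defining polynomial $f = \sum c_u x^u$ of $W$ into $g = \sum c_u s(\langle u, \gamma \rangle) x^u$ defining $W'$, whose coefficient valuations are $\val(c_u) + \langle u, \gamma \rangle$. Comparing with the definition of the initial form, one checks directly that $\ini_0(g) = \ini_\gamma(f)$, and hence $\ini_0(W') = \ini_\gamma(W)$. Consequently, finding $w' \in W'(K)$ with $\val(w') = 0$ and $\res(w') = v$ would yield $v' := s(\gamma) \cdot w' \in W(K)$ with $\val(v') = \gamma$ and $\res(v' \cdot s(-\gamma)) = v$, as required. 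Lemma \ref{lem:componentsofinitialform} then reduces us further to the case $\gamma = 0$ with $W$ irreducible.

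Next, I would construct an integral model. Let $I$ be the ideal of $W$ and set $\tilde I := I \cap \mathcal{O}[x_1^{\pm 1}, \dots, x_n^{\pm 1}]$. Combining Lemma \ref{lem:initial-is-initial} with the rescaling $f \mapsto s(-\min_u \val(c_u)) f$ shows that the coefficient-wise residue map sends $\tilde I$ onto $\ini_0(I)$ as ideals of $k[x_1^{\pm 1}, \dots, x_n^{\pm 1}]$. Hence the $\mathcal{O}$-scheme $\mathcal{W} := \operatorname{Spec}(\mathcal{O}[x^{\pm 1}]/\tilde I)$ has generic fiber $W$ and special fiber $\ini_0(W)$, and it is $\mathcal{O}$-flat (its coordinate ring is torsion-free, being a contraction along the localization $\mathcal{O}[x^{\pm 1}] \hookrightarrow K[x^{\pm 1}]$). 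Thus $v$ is a $k$-point of the special fiber of an $\mathcal{O}$-flat model, and the goal becomes to extend it to an $\mathcal{O}$-valued section landing in $(\mathcal{O}^\times)^n$.

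Finally, to perform this lift, I would apply a tropical Noether normalization: choose a monomial morphism $\pi : \mathbb{G}_{m,\mathcal{O}}^n \to \mathbb{G}_{m,\mathcal{O}}^d$ (with $d = \dim W = \dim \ini_0(W)$) whose restrictions to $W$ and to $\ini_0(W)$ are both finite and surjective; this is possible by a generic choice of integer exponents since both varieties are equidimensional of dimension $d$. Lifting $\pi(v)$ to any $u \in (\mathcal{O}^\times)^d$, the preimage of $u$ in $W(K)$ under $\pi|_W$ is a finite nonempty set, and the integrality of the corresponding extension of $\mathcal{O}$-algebras forces these preimages to have coordinates in $\mathcal{O}$ reducing to preimages of $\pi(v)$ in $\ini_0(W)$. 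Since $k$ is algebraically closed and $v$ lies in that fibre, one of the reductions equals $v$, producing the desired $w'$. The main obstacle is precisely this last Hensel-type step: one must arrange $\pi$ so that not only both fibres but also the induced extension $\mathcal{O}[y^{\pm 1}] \hookrightarrow \mathcal{O}[x^{\pm 1}]/\tilde I$ is module-finite, which requires choosing the exponents of $\pi$ compatibly with the tropical structure of $W$. Once this compatibility is arranged, the conclusion follows from the standard behaviour of specialization along finite extensions of valuation rings with algebraically closed residue field, which is the technical heart of the Fundamental Theorem of Tropical Geometry.
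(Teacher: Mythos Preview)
Your reduction to the irreducible case via Lemma~\ref{lem:componentsofinitialform} matches the paper exactly; for that case the paper simply cites \cite[Proposition~3.2.11]{MS}, so what you are doing is effectively sketching the proof of that reference (with the additional, harmless, preliminary reduction to $\gamma=0$).

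The outline of your lifting argument is correct, and your identification of the integral model with special fiber $\ini_0(W)$ is clean. Two points in the final paragraph need tightening, though. First, the claim that a ``generic choice of integer exponents'' makes $\pi$ finite simultaneously on $W$, on $\ini_0(W)$, and as a map of $\mathcal{O}$-algebras is not automatic: finiteness on both fibers does not by itself force module-finiteness over $\mathcal{O}$, and the actual argument in \cite{MS} chooses $\pi$ adapted to a Gr\"obner basis so that explicit monic integral equations over $\mathcal{O}[y^{\pm 1}]$ are available. Second, your assertion that ``one of the reductions equals $v$'' is exactly the Hensel step you flag, and counting alone does not justify it: the reductions of the finitely many $K$-points over $u$ could a priori collide in the special fiber and miss $v$. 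What makes it work is that $K$ is algebraically closed, so $\mathcal{O}$ is Henselian, and hence every $k$-point of a finite $\mathcal{O}$-scheme lifts to an $\mathcal{O}$-section; alternatively one argues via lying-over for the integral extension of semilocal rings obtained by localizing at $u$. Once these two points are made precise your argument is complete and coincides with the standard proof.
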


    \begin{proof}
        If $W$ is irreducible, this is \cite[Proposition 3.2.11]{MS} (whose proof uses irreducibility to draw a stronger conclusion and actually already yields our proposition for reducible subvarieties). The general case then follows from Lemma \ref{lem:componentsofinitialform}.
    \end{proof}

    \begin{thm}[{\autocite[Theorem 3.2.3]{MS}}]\label{thm:trop-is-val-of-points}
        Let $W \subseteq \mathbb{G}_{m,K}^n$ be an algebraic subvariety and $I$ the ideal of Laurent polynomials which vanish on $W$. Then the following sets coincide:
        \begin{itemize}
            \item[$(1)$] $\{\val(v) \in \Gamma^n \mid v \in W(K) \}$;
            \item[$(2)$] $\{\gamma \in \Gamma^n \mid \ini_{\gamma}(I)\neq \langle 1 \rangle\}.$ 
        \end{itemize}
    \end{thm}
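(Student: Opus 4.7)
The plan is to prove the two inclusions separately. The direction $(1) \subseteq (2)$ is essentially a direct residue computation, while $(2) \subseteq (1)$ will reduce to Proposition \ref{prop:hensel} once we produce a $k$-point of $\ini_\gamma(W)$ via the Nullstellensatz.

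For $(1) \subseteq (2)$, suppose $v \in W(K)$ with $\val(v) = \gamma$. Set $v' := v \cdot s(-\gamma)$, so that $\val(v') = 0$ componentwise, meaning $v' \in \mathcal{O}^n$ and $\res(v') \in (k^\times)^n$. Fix any $f = \sum_{u \in S} c_u x^u \in I$. Using that $s$ is a group homomorphism, one computes
\[ f(v) \;=\; \sum_{u \in S} c_u s(\langle u, \gamma\rangle) (v')^u \;=\; 0. \]
Set $\mu := \min_{u \in S}(\val(c_u) + \langle u,\gamma\rangle)$ and multiply by $s(-\mu)$. Each term now has coefficient of non-negative valuation, and the terms with valuation equal to zero are precisely those with $u \in S'$ (in the notation of the definition of $\ini_\gamma$). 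Taking the residue and using that $c_u s(\langle u,\gamma\rangle - \mu) = c_u s(-\val(c_u))$ for $u \in S'$ (the extra factor $s(\val(c_u) + \langle u,\gamma\rangle - \mu)$ being $s(0) = 1$), one obtains $\ini_\gamma(f)(\res(v')) = 0$. Since $f$ was arbitrary, $\res(v')$ is a common zero of $\ini_\gamma(I)$ in $(k^\times)^n$, whence $\ini_\gamma(I) \neq \langle 1\rangle$.

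For $(2) \subseteq (1)$, suppose $\ini_\gamma(I) \neq \langle 1 \rangle$. Since $k$ is algebraically closed, the Nullstellensatz applied in the Laurent polynomial ring $k[x_1^{\pm 1},\dots,x_n^{\pm 1}]$ (a localization of a polynomial ring) yields a point $v \in (k^\times)^n$ on which every element of $\ini_\gamma(I)$ vanishes; that is, $v \in \ini_\gamma(W)(k)$. Proposition \ref{prop:hensel} then produces $v' \in W(K)$ with $\val(v') = \gamma$, giving $\gamma \in \{\val(v') \mid v' \in W(K)\}$.

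The main subtlety is the bookkeeping in the first direction: one has to verify that the residue of $s(-\mu)\,f(v)$ coincides exactly with $\ini_\gamma(f)$ evaluated at $\res(v')$, not merely with a nonzero scalar multiple of it. This is precisely where the group-homomorphism property of the splitting $s$ is used, via the identity $s(\langle u,\gamma\rangle - \mu) = s(-\val(c_u)) \cdot s(\val(c_u) + \langle u,\gamma\rangle - \mu)$; the second factor is trivial exactly for $u \in S'$, ensuring that the residue computation lands on the correct formula for $\ini_\gamma(f)$ and that contributions from $u \notin S'$ have positive valuation and thus vanish under $\res$. The second direction is comparatively painless, as all the work has been absorbed into Proposition \ref{prop:hensel}.
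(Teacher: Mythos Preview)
Your proof is correct and follows essentially the same approach as the paper, which defers to \cite{MS} but notes in the remark after the statement that Proposition~\ref{prop:hensel} gives $(2)\subseteq(1)$ directly and that $(1)\subseteq(2)$ is the computation in \cite[proof of Theorem~3.2.3]{MS}. Your residue computation for $(1)\subseteq(2)$ is exactly this standard argument (and in fact reproves the easier inclusion of Lemma~\ref{lem:equal-residue}), and your use of the Nullstellensatz followed by Proposition~\ref{prop:hensel} for $(2)\subseteq(1)$ is precisely what the paper intends.
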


    We remark that in \cite[Theorem 3.2.3]{MS} the closure of the image of $W(K)$ under $\val$ is considered. For us, this is not necessary, as one can see using Proposition \ref{prop:hensel} that set (2) is contained in set (1) and not just in its closure (for the other inclusion, see \cite[proof of Theorem 3.2.3]{MS}).
    
    A \textit{($\Gamma$-rational) polyhedron} is a subset $\tau$ of $\Gamma^n$ consisting of the elements $\gamma$ which satisfy finitely many inequalities of the form $\langle u, \gamma \rangle \leq \gamma_0$, for some $u \in \mathbb{Q}^n$ and $\gamma_0 \in \Gamma$. A \textit{face} of the polyhedron $\tau$ is a polyhedron $\tau'$ for which there exists some $u \in \mathbb{Q}^n$ such that $\tau'=\{\gamma \in \tau \mid \langle u, \gamma \rangle \leq \langle u, \gamma' \rangle \, \forall \gamma' \in \tau \}.$ The \textit{relative interior} of the polyhedron $\tau$, denoted $\relint(\tau)$, is the set of points in $\tau$ which do not lie in any proper face of $\tau$.
    
     A \textit{($\Gamma$-rational) polyhedral complex} is a finite set $\Sigma$ of non-empty $\Gamma$-rational polyhedra that is closed under taking non-empty faces, and such that for all $\tau_1,\tau_2 \in \Sigma$ we have that, if $\tau_1 \cap \tau_2$ is non-empty, then it is a face of both $\tau_1$ and $\tau_2$ (and hence an element of $\Sigma$). The \textit{support} $\supp(\Sigma)$ of the polyhedral complex $\Sigma$ is the union $\bigcup_{\tau \in \Sigma} \tau \subseteq \Gamma^n$.

    The \textit{affine span} of a polyhedron $\tau$, denoted $\aff(\tau)$, is the smallest affine subspace of $\Gamma^n$ defined by equations of the form $\langle u, \gamma \rangle = \gamma_0$ ($u \in \mathbb{Q}^n$, $\gamma_0 \in \Gamma$) and containing $\tau$. We denote by $\lin(\tau)$ the linear subspace of $\Gamma^n$ that is parallel to $\aff(\tau)$. There is a matrix $A$ with rational entries such that $\lin(\tau)=\{\gamma \in \Gamma^n \mid A \cdot \gamma=0\}.$ We define $\lin(\tau)(\mathbb{R})$ and $\lin(\tau)(\mathbb{C})$ as the kernels of $A$ in $\mathbb{R}^n$ and $\mathbb{C}^n$ respectively. The \emph{dimension} $\dim(\tau)$ of $\tau$ is the dimension of $\lin(\tau)(\mathbb{R})$. The \emph{dimension} of a polyhedral complex is the maximal dimension of one of its polyhedra.
        
    \begin{thm}[{\autocite[Theorem 3.2.3, Proposition 3.2.8, and Theorem 3.3.8]{MS}}]\label{thm:structure-theorem}
        Let $W \subseteq \mathbb{G}_{m,K}^n$ be an algebraic subvariety. The set $\val(W(K))$ is the support of a polyhedral complex of dimension $\dim W$.
    \end{thm}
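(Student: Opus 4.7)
The plan is to combine Theorem \ref{thm:trop-is-val-of-points}, which identifies $\val(W(K))$ with $\{\gamma \in \Gamma^n : \ini_\gamma(I) \neq \langle 1\rangle\}$ (where $I$ is the ideal of $W$), with a Gröbner-theoretic construction of a polyhedral complex whose support is exactly this set. First I would fix a finite \emph{universal Gröbner basis} $\{f_1,\dots,f_r\}$ of $I$, i.e.\ a finite generating set such that for every $\gamma \in \Gamma^n$ the initial forms $\ini_\gamma(f_i)$ generate $\ini_\gamma(I)$; this exists by the standard finiteness of the Gröbner fan over a valued field. For each $f_i=\sum_{u} c_u x^u$, the function $\gamma \mapsto \min_u\{\val(c_u)+\langle u,\gamma\rangle\}$ is piecewise $\mathbb{Q}$-linear with $\Gamma$-rational breaks, so the loci on which a prescribed subset of monomials of $f_i$ attains the minimum form the cells of a $\Gamma$-rational polyhedral complex. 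The common refinement $\Sigma$ of these $r$ complexes is a $\Gamma$-rational polyhedral complex on whose cells the map $\gamma \mapsto \ini_\gamma(I)$ is constant on relative interiors. By Theorem \ref{thm:trop-is-val-of-points}, the subcomplex $\Sigma'\subseteq \Sigma$ of cells with non-unit initial ideal has support equal to $\val(W(K))$.

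For the dimension claim, I reduce to $W$ irreducible using Lemma \ref{lem:componentsofinitialform}, which implies $\val(W(K))=\bigcup_j \val(W_j(K))$ over the irreducible components. For $W$ irreducible, $\tau$ a cell of $\Sigma'$, and $\gamma \in \relint(\tau)$, I show that $\ini_\gamma(W)$ is invariant under a $\dim\tau$-dimensional subtorus of $\mathbb{G}_{m,k}^n$: for every $v \in \lin(\tau)\cap\mathbb{Q}^n$, moving $\gamma$ to $\gamma+tv$ for small $t\in\Gamma$ keeps the minimizing monomial-subset of each $f_i$ fixed, so scaling the coordinates of $\ini_\gamma(W)$ via the cocharacter associated to $v$ leaves $\ini_\gamma(W)$ invariant. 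Combining this torus action with the statement of Lemma \ref{lem:componentsofinitialform} that every component of $\ini_\gamma(W)$ has dimension exactly $\dim W$ gives $\dim \tau \leq \dim W$, hence the upper bound.

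The hard part is the matching lower bound, i.e.\ that $\Sigma'$ is purely of dimension $\dim W$. A clean route is to project: choose $\dim W$ coordinates such that the monomial map $\pi\colon \mathbb{G}_{m,K}^n \to \mathbb{G}_{m,K}^{\dim W}$ restricts to a dominant, generically finite morphism on $W$. Then $\pi(W)$ contains a Zariski open dense subset of $\mathbb{G}_{m,K}^{\dim W}$, so $\val(\pi(W(K)))=\Gamma^{\dim W}$, and since $\pi$ induces a $\mathbb{Q}$-linear surjection $\Gamma^n \twoheadrightarrow \Gamma^{\dim W}$ sending $\val(W(K))$ onto $\val(\pi(W(K)))$, the polyhedral set $\val(W(K))$ must surject onto $\Gamma^{\dim W}$. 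Combined with the upper bound $\dim\tau\leq \dim W$ from the previous paragraph, this forces every maximal cell of $\Sigma'$ to have dimension exactly $\dim W$. The main obstacle is verifying the compatibility $\val(\pi(W(K)))=\pi(\val(W(K)))$ and, more substantively, ruling out lower-dimensional cells being maximal — this is where one must appeal either to the balancing condition on $\trop(W)$ or to a careful analysis of the fibers of $\pi$ on each cell, which is the genuinely delicate content of the structure theorem.
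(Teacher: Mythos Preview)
The paper does not prove this theorem: it is stated with a citation to \cite{MS} (specifically Theorem 3.2.3, Proposition 3.2.8, and Theorem 3.3.8 there) and no argument is given. So there is no paper-proof to compare against; your sketch is essentially an outline of the proof in \cite{MS} itself.

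That said, a few comments on your sketch. The construction of the polyhedral complex via a universal Gr\"obner basis and the upper bound $\dim\tau\le\dim W$ via torus-invariance of $\ini_\gamma(W)$ along $\lin(\tau)$ are exactly the arguments used in \cite{MS}. For the lower bound, note that the statement as recorded in the paper only asserts that the complex has dimension $\dim W$, not that it is pure; for this weaker claim your projection argument is enough, once you know that $\val(\pi(W(K)))=\pi_{\mathbb{Q}}(\val(W(K)))$ (this is the elementary direction: $\val$ commutes with monomial maps on points). One small point to be careful with: it is not automatic that a projection onto $\dim W$ of the original coordinates is generically finite on $W$; in general one first applies a suitable element of $\mathrm{GL}_n(\mathbb{Z})$ to arrange this, which is harmless since such a change of coordinates acts linearly on $\Gamma^n$ and preserves the polyhedral-complex structure.

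You are right that purity (every maximal cell has dimension $\dim W$) is the genuinely delicate content, and that neither the projection argument nor the torus-invariance argument alone yields it; in \cite{MS} this is handled via the balancing condition and a local analysis (their Theorem 3.3.8). Since the paper only uses the dimension statement and the existence of a tropicalization with constant initial ideals on relative interiors of cells, your outline covers everything the paper actually invokes.
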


    It follows from the existence of universal Gr\"obner bases \autocite[Corollary 2.5.11]{MS} together with \cite[Proposition 2.6.1]{MS} that the polyhedral complex can be chosen so that for all  $\gamma_1,\gamma_2 \in \val(W(K))$, if $\gamma_1,\gamma_2$ lie in the relative interior of the same polyhedron $\tau$ in the complex, then $\ini_{\gamma_1}(I)=\ini_{\gamma_2}(I)$. If we fix such a choice, we are allowed to write $W_\tau$ instead of $\ini_\gamma(W)$ and $\ini_\tau(I)$ instead of $\ini_\gamma(I)$ whenever $\gamma \in \relint(\tau)$.

    \begin{defn}
    Let $W \subseteq \mathbb{G}_{m,K}^n$ be an algebraic subvariety. A \textit{tropicalization} of $W$ is a polyhedral complex with support $\val(W(K))$ such that for every polyhedron $\tau$ in the complex and all $\gamma_1,\gamma_2 \in \relint(\tau)$, we have $\ini_{\gamma_1}(W)=\ini_{\gamma_2}(W)$.
    \end{defn}

    While the polyhedral complex above is not determined uniquely (see \cite[Example 3.5.4]{MS}), it always exists. We abuse notation and write $\trop(W)$ to denote one such complex.

    Hence, we have $\val(W(K))=\supp(\trop(W))$.

    \begin{lem}\label{lem:equal-residue}
         Let $W \subseteq \mathbb{G}_{m,K}^n$ be an algebraic subvariety, $I$ the ideal of Laurent polynomials vanishing on $W$. Let $v \in W(K)$, $\gamma=\val(v) \in \Gamma^n$, $W_\tau=\ini_{\gamma}(W)$, and $a=s(\gamma)$. 
         
        Then 
        \[\ini_\gamma(I) = \{\res(g) \mid g \in \mathcal{O} [x_1^{\pm 1},\hdots,x_n^{\pm 1}],~g|_{a^{-1} \cdot W} = 0\}\]
        and 
        \begin{equation}\label{eq:ini-var-equality}
            W_\tau(k)=\res((a^{-1} \cdot W(K)) \cap (\mathcal{O}^\times)^n).
        \end{equation}
    \end{lem}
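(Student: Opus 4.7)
The plan is to observe that the substitution $x \mapsto a^{-1} \cdot x$ gives a multiplicative isomorphism between $W$ and $a^{-1} \cdot W$ that shifts valuations by $-\gamma$, so that under this translation the initial form at $\gamma$ of a Laurent polynomial vanishing on $W$ becomes (up to a scalar) the residue of a Laurent polynomial in $\mathcal{O}[x^{\pm 1}]$ vanishing on $a^{-1} \cdot W$. Once this dictionary is in place, the first equality is essentially a bookkeeping exercise, and the second is then immediate from the first combined with Proposition \ref{prop:hensel}.

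For the first equality, I would prove both inclusions by a direct calculation. For the inclusion $\supseteq$, given $g = \sum_u d_u x^u \in \mathcal{O}[x^{\pm 1}]$ vanishing on $a^{-1} \cdot W$, I set $f(x) := g(a^{-1} x) = \sum_u d_u s(-\langle u, \gamma \rangle) x^u$, which lies in $I$ since $f(v') = g(a^{-1} v') = 0$ for all $v' \in W(K)$. The coefficient $c_u = d_u s(-\langle u, \gamma \rangle)$ satisfies $\val(c_u) + \langle u, \gamma \rangle = \val(d_u) \geq 0$, so the minimum of $\val(c_u) + \langle u, \gamma \rangle$ is $0$ (unless every $d_u$ has positive valuation, in which case $\res(g) = 0$ and there is nothing to prove), and the set $S'$ is exactly $\{u : \val(d_u) = 0\}$. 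A mechanical simplification then yields $\ini_\gamma(f) = \sum_{u \in S'} \res(d_u) x^u = \res(g)$, so $\res(g) \in \ini_\gamma(I)$. For the reverse inclusion $\subseteq$, Lemma \ref{lem:initial-is-initial} reduces to showing that each $\ini_\gamma(f)$ with $f = \sum_{u \in S} c_u x^u \in I$ is of the form $\res(\tilde f)$ for some $\tilde f \in \mathcal{O}[x^{\pm 1}]$ vanishing on $a^{-1} \cdot W$. I would set $m := \min_{u \in S}(\val(c_u) + \langle u, \gamma \rangle)$ and define $\tilde f(x) := s(-m) f(a x) = \sum_u s(-m) c_u s(\langle u, \gamma \rangle) x^u$. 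The coefficients have valuation $\val(c_u) + \langle u, \gamma \rangle - m \geq 0$, with equality iff $u \in S'$, so $\tilde f \in \mathcal{O}[x^{\pm 1}]$ and $\res(\tilde f)$ picks out precisely the terms indexed by $S'$; a short calculation identifies $\res(\tilde f)$ with $\ini_\gamma(f)$, and $\tilde f|_{a^{-1} \cdot W} = 0$ since $\tilde f(w) = s(-m) f(a w) = 0$ whenever $a w \in W(K)$.

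For the second equality, the inclusion $(\supseteq)$ follows directly from the first: if $w \in a^{-1} \cdot W(K) \cap (\mathcal{O}^\times)^n$ and $g \in \mathcal{O}[x^{\pm 1}]$ vanishes on $a^{-1} \cdot W$, then $g(w) = 0$, so $\res(g)(\res(w)) = 0$; by the first equality, $\res(w)$ is annihilated by every element of $\ini_\gamma(I)$ and therefore belongs to $W_\tau(k)$. The inclusion $(\subseteq)$ is a direct application of Proposition \ref{prop:hensel}: given $\bar w \in W_\tau(k) = \ini_\gamma(W)(k)$, the proposition produces $v' \in W(K)$ with $\val(v') = \gamma$ and $\res(v' \cdot s(-\gamma)) = \bar w$; setting $w := a^{-1} v'$, one has $w \in a^{-1} \cdot W(K)$, $\val(w) = 0$ so $w \in (\mathcal{O}^\times)^n$, and $\res(w) = \bar w$.

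The only real difficulty is careful bookkeeping of the interplay between the valuation $\val$, the splitting $s$, and the multiplicative shifts by $a$ and $a^{-1}$; all the manipulations reduce to a single key identity $\val(c_u) + \langle u, \gamma \rangle = \val(d_u)$ relating the coefficients of $f$ and $g = s(m) \tilde f$, and every step proceeds by tracking this identity across the two directions of substitution.
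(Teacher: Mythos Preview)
Your proposal is correct and follows essentially the same approach as the paper's proof: both directions of the first equality are handled by the same substitution $x \mapsto a^{\pm 1} x$ and rescaling by $s(-m)$ (the paper writes this as dividing by $s(\val(\lambda))$ with $\lambda = c_{u_0}a^{u_0}$, but $\val(\lambda) = m$, so this is the same normalization), and the second equality is deduced from the first together with Proposition~\ref{prop:hensel} in exactly the way you describe.
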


    \begin{proof}
    We first prove the equality of the two sets of polynomials. 
    
    $(\supseteq)$ Let $g$ be a Laurent polynomial over $\mathcal{O}$ which vanishes on $a^{-1} \cdot W$; then $g(a^{-1} \cdot x) \in I$. If $\res(g)=0$ then trivially $\res(g) \in \ini_\gamma(I)$, so we assume that $\res(g) \neq 0$, that is, $g$ has at least one coefficient of valuation 0.
    
    Write $g=\sum_{u \in S} c_u x^u$. Then $g(a^{-1} \cdot x)=\sum_{u \in S}  c_ua^{-u} x^u $. Let 
    \[S':=\{u \in S \mid \val(c_ua^{-u})+\langle u,\gamma \rangle  \leq \val(c_{u'} a^{-u'}) + \langle u', \gamma \rangle  \, \forall u' \in S\}. \]
    Since $\val(a)=\gamma$, we have that $u \in S'$ if and only if $\val(c_u) \leq \val(c_{u'})$ for all $u' \in S$. By assumption on $g$, this happens if and only if $\val(c_u)=0$.
    Hence 
    \begin{align*}
        \ini_\gamma(g(a^{-1} \cdot x)) &= \sum_{u \in S'} \res \left( c_ua^{-u} s(-\val(c_u a^{-u})) x^u \right) \\
        &=\sum_{u \in S'} \res \left( c_u s(-\val(c_u)) x^u \right) \\
        &=\res \left( \sum_{u \in S} c_ux^u \right) \\
        &=\res(g)
    \end{align*}
    and thus $\res(g) \in \ini_\gamma(I)$.
    
    $(\subseteq)$ By Lemma \ref{lem:initial-is-initial}, it is sufficient to show that, for every $f\in I$, $\ini_\gamma(f)$ belongs to the right-hand set. So let $f \in I$. If $f=0$, there is nothing to prove, so we assume $f \neq 0$, and write $f=\sum_{u \in S} c_ux^u$. Let $$S':=\{u \in S \mid \val(c_u)+ \langle u, \gamma \rangle \leq \val(c_{u'})+\langle u', \gamma \rangle \, \forall u' \in S\}.$$ Take some $u_0 \in S'$, and let $\lambda:=c_{u_0} a^{u_0}$.
    
     Then, for all $u \in S$, 
        \begin{align*}
            \val\left( \frac{c_u a^u}{\lambda} \right) 
            &=\val(c_u)+\val\left( a^u \right) - \val(\lambda) \\
            &=\val(c_u)+\langle u, \gamma \rangle - \val(\lambda) \\
            &=\val(c_u)+\langle u, \gamma \rangle - \val(c_{u_0})- \langle u_0, \gamma \rangle \\
            & \geq 0
        \end{align*}
        with equality if and only if $u \in S'$, and hence the coefficients of the polynomial 
    \begin{align*}
        \frac{f(a \cdot x)}{s(\val(\lambda))} &=\frac{\sum_{u \in S} c_u (a \cdot x)^u}{s(\val(\lambda))}  \\
        &=\sum_{u \in S} \frac{c_ua^u}{s(\val(\lambda))} x^u
    \end{align*}
    have non-negative valuation, strictly positive for all $u \in S \setminus S'$. Then
    \begin{align*}
        \res \left( \frac{f(a \cdot x)}{s(\val(\lambda))} \right) &= \res \left( \frac{\sum_{u \in S} c_u (a \cdot x)^u}{s(\val(\lambda))} \right) \\
        &=\res \left( \frac{\sum_{u \in S'} c_u(a \cdot x)^u}{s(\val(\lambda))} \right) + \res \left( \frac{\sum_{u \in S \setminus S'} c_u(a \cdot x)^u}{s(\val(\lambda))} \right) \\
        &=\res \left(\sum_{u \in S'} \frac{c_u (a \cdot x)^u} {s(\val(c_u)+\langle u,\gamma \rangle) }\right) +0 \\
        &=\res \left( \sum_{u \in S'} c_us(-\val(c_u)) x^u \right)\\
        &=\ini_\gamma \left( f \right).
    \end{align*}

    Hence if $f \in I \backslash \{0\}$, then $\ini_\gamma(f)=\res \left( \frac{f(a \cdot x)}{s(\val(\lambda))}\right)$, and $\frac{f(a \cdot x)}{s(\val(\lambda))}$ is a polynomial with coefficients in $\mathcal{O}$ which vanishes on $a^{-1} \cdot W$. This completes the proof of the first equality.

    We now prove the second equality.

        $(\subseteq)$ Let $v_0 \in W_\tau(k)$. Then by Proposition \ref{prop:hensel}, there is $v_1 \in W(K)$ such that $\val(v_1)=\gamma$ and $\res(v_1 \cdot s(-\gamma))=\res(a^{-1} \cdot v_1)=v_0$, so $v_0 \in \res \left( (a^{-1} \cdot W(K)) \cap (\mathcal{O}^\times)^n \right)$.

        $(\supseteq)$ Let $v_1 \in  (a^{-1} \cdot W(K)) \cap (\mathcal{O}^\times)^n$, $f \in \ini_\gamma(I)$. By the first part of the proof, there is $g \in \mathcal{O}[x_1^{\pm 1},\dots,x_n^{\pm 1}]$ such that $g$ vanishes on $a^{-1} \cdot W$ (so in particular $g(v_1)=0$) and $\res(g)=f$. Then, $f(\res(v_1))=\res(g)(\res(v_1))=\res(g(v_1))=0$. Thus $\res(v_1) \in W_\tau(k)$.
    \end{proof}

    \begin{defn}\label{def:jtau}
        Let $\tau \leq \Gamma^n$ be a polyhedron, $A \in \mathrm{Mat}_{d \times n}(\mathbb{Z})$ a matrix with integer entries such that $\aff(\tau)=\{x \in \Gamma^n \mid Ax=b \}$ for some $b \in \Gamma^d$.

        We denote by $J_{\tau}$ the identity component of the algebraic subgroup of $\mathbb{G}_{m,k}^n$ defined by $y^A=1$.
    \end{defn}

    In the next proof, we will take initial forms of initial forms. To be able to do this, we consider an algebraic closure $\overline{k((\Gamma))}$ of the Hahn field $k((\Gamma))$ (for a general treatment of Hahn fields, see for example \cite[Chapter 3]{Serra2021Autom}). Hence, for $f =\sum_{u \in S} c_ux^u\in k[x_1^{\pm 1}, \dots, x_n^{\pm 1}]$ and $\gamma \in \Gamma^n$ we have that $\ini_\gamma(f):=\sum_{u \in S'} c_ux^u$ where $S':=\{u \in S \mid \langle u, \gamma \rangle \leq \langle u', \gamma \rangle \, \forall u' \in S \}$.
    
    \begin{prop}\label{prop:invariant}
        Let $W \subseteq \mathbb{G}_{m,K}^n$ be an algebraic subvariety and $\tau \in \trop(W)$ a polyhedron. Then $J_\tau \leq \stab(W_{\tau,\mathrm{Irr}})$ for every irreducible component $W_{\tau,\mathrm{Irr}}$ of $W_\tau$, in particular $J_\tau \leq \stab(W_\tau)$.
    \end{prop}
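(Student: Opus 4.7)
The plan is first to show $J_\tau \leq \stab(W_\tau)$ and then, since $J_\tau$ is connected and thus can only act trivially by permutation on the finite set of irreducible components of $W_\tau$, to upgrade this to invariance of every irreducible component separately. Fix $\gamma \in \relint(\tau)$ and let $I$ denote the ideal of $W$ in $K[x_1^{\pm 1}, \dots, x_n^{\pm 1}]$. By the existence of a universal Gr\"obner basis (\cite[Corollary 2.5.11]{MS} combined with \cite[Proposition 2.6.1]{MS}), I would choose finitely many $f_1, \dots, f_s \in I$ such that $\ini_{\gamma'}(I) = \langle \ini_{\gamma'}(f_1), \dots, \ini_{\gamma'}(f_s)\rangle$ for every $\gamma' \in \Gamma^n$; by refining $\trop(W)$ if necessary, I may further assume that each $\ini_{\gamma'}(f_i)$ is literally constant (not merely the whole ideal) as $\gamma'$ varies in $\relint(\tau)$.

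The key claim is that for each $i$, the support of $\ini_\gamma(f_i)$ is contained in a single coset of
\[ R := \mathrm{span}_{\mathbb{Q}}(\text{rows of } A) \cap \mathbb{Z}^n = \lin(\tau)^\perp \cap \mathbb{Z}^n. \]
To prove it, fix any $u \in \lin(\tau) \cap \mathbb{Q}^n$; for sufficiently small $\epsilon > 0$, $\gamma + \epsilon u$ still lies in $\relint(\tau)$. Applying the iterated initial form identity inside $\overline{k((\Gamma))}[x^{\pm 1}]$ (which is the reason the remark preceding the statement passes to this larger coefficient ring) yields $\ini_u(\ini_\gamma(f_i)) = \ini_{\gamma + \epsilon u}(f_i)$, and by the constancy arranged above, this equals $\ini_\gamma(f_i)$. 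Hence all monomials in $\ini_\gamma(f_i)$ share the same value of $\langle u, \cdot \rangle$; letting $u$ range over all of $\lin(\tau) \cap \mathbb{Q}^n$ (a $\mathbb{Q}$-linearly dense subset of $\lin(\tau)$) gives the claim.

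The rest is immediate: the character lattice of $J_\tau$ is precisely $\mathbb{Z}^n / R$, so every $t \in J_\tau(k)$ satisfies $t^{w-w'} = 1$ for any $w, w'$ in the support of $\ini_\gamma(f_i)$; hence $\ini_\gamma(f_i)(t \cdot x)$ is a nonzero scalar multiple of $\ini_\gamma(f_i)(x)$, making $V(\ini_\gamma(f_i))$ invariant under multiplication by $t$. Intersecting over $i$ gives $t \cdot W_\tau = W_\tau$, hence $J_\tau \leq \stab(W_\tau)$, and the connectedness argument above then handles each irreducible component. The main technical obstacle I expect is securing the two compatibility properties needed from the polyhedral complex: that each $\ini_{\gamma'}(f_i)$ is constant on $\relint(\tau)$ (rather than only $\ini_{\gamma'}(I)$ being constant), and that the iterated initial form identity $\ini_u(\ini_\gamma(f_i)) = \ini_{\gamma + \epsilon u}(f_i)$ makes sense and is valid inside $\overline{k((\Gamma))}[x^{\pm 1}]$; once these are in place, the algebraic content of the proof is essentially the linear-algebra observation that the characters of $\mathbb{G}_{m,k}^n$ vanishing on $J_\tau$ are exactly $R$.
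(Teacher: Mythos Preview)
Your proposal is correct and follows essentially the same strategy as the paper: use the iterated initial form identity to deduce that $\ini_\tau(I)$ is homogeneous for every grading coming from $\lin(\tau)$, hence $J_\tau$-invariant, and then pass to irreducible components via connectedness of $J_\tau$. The only packaging differences are that the paper organizes the invariance check by reducing to one-dimensional subtori of $J_\tau$ and invokes \cite[Lemma 2.6.2(2)]{MS} at the ideal level, whereas you argue directly at the level of the chosen generators $f_i$ and the full character lattice; your flagged ``refinement'' concern is harmless, since after refining you can pass to any top-dimensional subcell $\tau' \subseteq \tau$, for which $\aff(\tau') = \aff(\tau)$ and $W_{\tau'} = W_\tau$.
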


    \begin{proof}
        We first show that $J_\tau \leq \stab(W_\tau)$.
        
        Let $I$ be the ideal of Laurent polynomials vanishing on $W$ and let $\gamma \in \relint(\tau)$.

        If $\dim J_{\tau} = 0$, then $J_\tau \leq \stab(W_\tau)$ holds trivially.

        Hence, we can assume that $\dim J_\tau > 0$. Then the set of points in $J_\tau(k)$ which lie in an irreducible 1-dimensional algebraic subgroup of $J_\tau$ is Zariski dense in $J_\tau$; hence, it is sufficient to check that $W_\tau$ is stabilized by these points, as the stabilizer is a (Zariski closed) algebraic subgroup of $\mathbb{G}_{m,k}^n$.

        Let $H$ be an irreducible 1-dimensional algebraic subgroup of $J_{\tau}$. We have
        $H(k) = \{(t^{u_1}, \hdots, t^{u_n}) \mid t \in k^\times\}$
        for some $u = (u_1,\hdots,u_n) \in \mathbb{Z}^n \setminus \{0\}$.
        Assume then that $z \in H(k)$ so that $z = (t_0^{u_1}, \hdots, t_0^{u_n})$ for some $t_0 \in k^\times$.
        Set $v = (u_1 \gamma_0,\hdots,u_n \gamma_0)$ for some $\gamma_0 \in \Gamma \setminus \{0\}$. Since $H \subseteq J_\tau$, we have that $v \in \lin(\tau)$; since $\gamma \in \relint(\tau)$, we may then choose $\gamma_0$ so that $\gamma+v \in \relint(\tau)$.

        The proofs of \cite[Lemma 2.4.6 and Corollary 2.4.10]{MS} can be easily adapted to non-homogeneous ideals to show that, possibly after replacing $v$ by some rational multiple of itself, $\ini_v(\ini_\gamma(I)) \subseteq \ini_{\gamma + v}(I)$. It then follows from \cite[Corollary 2.4.10 and Proposition 2.6.1]{MS} and the fact that dehomogenizing and then rehomogenizing can only make an ideal larger that also $\ini_{\gamma + v}(I) \subseteq \ini_v(\ini_\gamma(I))$ (after possibly replacing $v$ again). Hence we have $\ini_\tau(I) = \ini_{\gamma+v}(I)=\ini_v(\ini_\gamma(I)) = \ini_v(\ini_\tau(I))$.

        We apply \cite[Lemma 2.6.2(2)]{MS} to deduce that $\ini_\tau(I)$ is homogeneous with respect to the grading that weights the $i$-th variable by the $i$-th entry $v_i$ of $v$ ($i = 1,\hdots,n$). Let $w \in W_\tau(k)$, and fix homogeneous generators $f_1,\dots,f_\ell$ for $\ini_\tau(I)$. We deduce that for each $i$ we have:
        \[ f_i(z \cdot w) = z_if_i(w)=0\] for some $z_i \in k^\times$. It follows that $z \cdot w \in W_\tau(k)$.
        Since $w \in W_\tau(k)$ was arbitrary, it follows that $z \in \stab(W_\tau)(k)$. Since $z$ and $H$ were arbitrary, we deduce that $J_\tau \leq \stab(W_\tau)$.

        Finally, if $N$ denotes the number of irreducible components of $W_\tau$, then, for every $g \in J_\tau(k) \leq \stab(W_\tau)(k)$, we have that $g^{N!}$ stabilizes each irreducible component of $W_\tau$ individually. But $J_\tau$ is connected, so $J_\tau(k)$ is divisible and we are done.
    \end{proof}

    Recall that a \emph{cone} is a polyhedron which is defined by inequalities of the form $\langle u, \gamma \rangle \leq 0$, and that a \emph{fan} is a polyhedral complex whose polyhedra are all cones.

    \begin{defn}
    Let $\Sigma_1, \Sigma_2$ be polyhedral complexes in $\Gamma^n$ and $\gamma$ a point in the intersection of their supports. We say $\Sigma_1$ and $\Sigma_2$ \emph{locally coincide around $\gamma$} if there is an open box $B \subseteq \Gamma^n$ containing $\gamma$ such that for every $\tau \in \Sigma_1$ (resp$.$ $\Sigma_2$) with $\gamma \in \tau$, there is $\tau' \in \Sigma_{2}$ (resp$.$ $\Sigma_1$) such that $\tau \cap B=\tau' \cap B$.
    \end{defn}

    \begin{defn}
        Let $\Sigma$ be a polyhedral complex in $\Gamma^n$, $\tau \in \Sigma$, $\gamma \in \relint(\tau)$. The \emph{star} of $\Sigma$ around $\tau$, denoted $\star_{\Sigma}(\tau)$, is the uniquely determined fan which locally coincides with $\Sigma-\gamma$ around $0$. The star does not depend on the choice of $\gamma \in \relint(\tau)$.
    \end{defn}

    One can check that this definition coincides with \cite[Definition 2.3.6]{MS} (for $\Gamma = \mathbb{R}$). The  fact that $\star_{\Sigma}(\tau)$ and $\Sigma - \gamma$ locally coincide around $0$ induces a bijective correspondence between the star and the set of polyhedra in $\Sigma$ that contain $\tau$; this correspondence is also independent of the choice of $\gamma$.

    \begin{prop}[{\autocite[Lemma 3.3.6]{MS}}]\label{prop:star-is-tropical}
        Let $W \subseteq \mathbb{G}^n_{m,K}$ be an algebraic subvariety and let $\tau \in \trop(W)$ be a polyhedron and $W_\tau \subseteq \mathbb{G}^n_{m,\overline{k((\Gamma))}}$ the corresponding initial variety. 

        Then $\star_{\trop(W)}(\tau)$ is a tropicalization of $W_\tau$.
    \end{prop}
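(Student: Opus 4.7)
The plan is to identify the tropicalization of $W_\tau$ by pulling initial-ideal information back through the nested initial-ideal formula. I would first fix a basepoint $\gamma \in \relint(\tau)$ so that, by our convention, $W_\tau$ is cut out by $I_\tau := \ini_\gamma(I)$, where $I \subseteq K[x_1^{\pm 1},\dots,x_n^{\pm 1}]$ is the defining ideal of $W$. The central technical tool, already exploited in the proof of Proposition \ref{prop:invariant}, is the identity
\[
\ini_v\bigl(\ini_\gamma(I)\bigr) \;=\; \ini_{\gamma + \epsilon v}(I)
\]
for every $v \in \Gamma^n$ and every sufficiently small $\epsilon \in \Gamma_{>0}$ (obtained by adapting the proofs of \cite[Lemma 2.4.6 and Corollary 2.4.10]{MS} to non-homogeneous ideals). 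Note that, since $\Gamma$ is divisible, the value group of $\overline{k((\Gamma))}$ equals $\Gamma$, so the tropical data on both sides live in the same space.

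With this tool in hand, the equality of supports is immediate from Theorem \ref{thm:trop-is-val-of-points}. Indeed,
\[
v \in \val\bigl(W_\tau(\overline{k((\Gamma))})\bigr) \;\Longleftrightarrow\; \ini_v(I_\tau) \neq \langle 1 \rangle \;\Longleftrightarrow\; \ini_{\gamma + \epsilon v}(I) \neq \langle 1 \rangle
\]
for small enough $\epsilon$, which holds exactly when $\gamma + \epsilon v \in \supp(\trop(W))$ for small $\epsilon$, i.e.\ when $v \in \supp(\star_{\trop(W)}(\tau))$. Thus $\supp(\star_{\trop(W)}(\tau)) = \val(W_\tau(\overline{k((\Gamma))}))$.

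It remains to check that the initial ideals of $I_\tau$ are constant on the relative interior of each cone of $\star_{\trop(W)}(\tau)$. By the definition of star and the bijection it induces with the polyhedra of $\trop(W)$ that contain $\tau$, two points $v_1, v_2$ lying in the relative interior of the same cone of $\star_{\trop(W)}(\tau)$ correspond to a polyhedron $\tau^* \in \trop(W)$ with $\tau \subseteq \tau^*$ such that, for all sufficiently small $\epsilon > 0$, both $\gamma + \epsilon v_1$ and $\gamma + \epsilon v_2$ lie in $\relint(\tau^*)$. Since $\trop(W)$ is a tropicalization, $\ini_{\gamma + \epsilon v_1}(I) = \ini_{\gamma + \epsilon v_2}(I)$, and applying the displayed identity yields $\ini_{v_1}(I_\tau) = \ini_{v_2}(I_\tau)$, as required.

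The main obstacle I expect is verifying the nested initial-ideal identity in this non-homogeneous setting with enough uniformity in $\epsilon$; once that is in place, the rest of the proof is a matter of translating between the description of $\star_{\trop(W)}(\tau)$ in terms of the bijection with polyhedra through $\tau$ and the direct definition via local coincidence with $\trop(W) - \gamma$ around $0$, which is a routine but careful piece of bookkeeping.
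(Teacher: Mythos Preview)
Your proposal is correct and follows exactly the argument of \cite[Lemma 3.3.6]{MS}, which is what the paper cites rather than reproving; the paper itself gives no independent proof, only the remark that the cited argument also yields constancy of initial varieties on relative interiors of cones of the star. One minor point: your $\epsilon$ should be taken in $\mathbb{Q}_{>0}$ rather than $\Gamma_{>0}$ (as in the paper's proof of Lemma~\ref{lem:preserve-gr-basis}), but since $I_\tau$ is generated over $k$ and hence $\ini_v(I_\tau)$ is invariant under positive rational rescaling of $v$, this causes no difficulty.
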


    The proof of \cite[Lemma 3.3.6]{MS} shows that $\star_{\trop(W)}(\tau)$ has the property that, for $\gamma_1, \gamma_2$ in the relative interior of the same cone, the initial varieties $\ini_{\gamma_1}(W_\tau)$ and $\ini_{\gamma_2}(W_\tau)$ are the same.

    \begin{rem}\label{rem:invariant-star}
        Every cone $\sigma \in \star_{\trop(W)}(\tau)$ contains $\lin(\tau)$, and thus $\trop(W_\tau)$ is invariant under translation by $\lin(\tau)$. This is the tropical counterpart to Proposition \ref{prop:invariant}.
    \end{rem}

    If $W \subseteq \mathbb{G}_m^n$ is an algebraic subvariety, we may see it as a subvariety of $\mathbb{G}^n_{m,K}$ for some algebraically closed non-trivially valued field extension $K$ of $\mathbb{C}$, for example the Puiseux series field, with Archimedean value group and with the trivial valuation induced on $\mathbb{C}$. It is natural to expect that in this case the tropicalization should have a simple structure. This is true: we will explain this in Remark \ref{rem:constant-coefficients} and see a consequence, related to Lemma \ref{lem:equal-residue}, in Remark \ref{rem:equal-residue}. However we first need a technical lemma saying that being a universal Gr\"obner basis for a homogeneous ideal is preserved under extensions of algebraically closed valued fields with Archimedean value group. In particular, we need to allow ourselves to take initial forms of homogeneous polynomials and ideals over (possibly) trivially valued fields, see \cite[p.~66]{MS}. For this, we fix an embedding of the value group as an ordered subgroup of $\mathbb{R}$. This is slightly different from the setting that we have been working in and will only be relevant in Lemma \ref{lem:preserve-gr-basis} and Remark \ref{rem:constant-coefficients}. See \cite[Sections 2.4 and 2.5]{MS} for more details on universal Gr\"obner bases.

    \begin{lem}\label{lem:preserve-gr-basis}
    Let $F$ be an algebraically closed subfield of $K$, equipped with the (possibly trivial) induced valuation. Let $I \subseteq F[x_0,\hdots,x_n]$ be a homogeneous ideal and suppose that $\{f_1,\hdots,f_N\}$ is a universal Gr\"obner basis of $I$. Then $\{f_1,\hdots,f_N\}$ is also a universal Gr\"obner basis of the ideal generated by $I$ in $K[x_0,\hdots,x_n]$.
    \end{lem}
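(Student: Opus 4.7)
The inclusion $\langle \ini_\gamma(f_1), \ldots, \ini_\gamma(f_N) \rangle \subseteq \ini_\gamma(I \cdot K[x_0, \ldots, x_n])$ in $k[x_0, \ldots, x_n]$ is trivial for every weight $\gamma$, so what requires proof is the reverse inclusion. I plan to reduce this to a degree-by-degree comparison of Hilbert functions: both ideals are homogeneous in the usual grading (since the $f_i$ are homogeneous, initial forms of homogeneous polynomials are homogeneous of the same degree, and the initial ideal of a homogeneous ideal is homogeneous), so it suffices to verify equality of the graded pieces in each total degree $d$.

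To compare dimensions over the two residue fields, I first choose compatible splittings $s_F : \Gamma_F \to F^\times$ and $s_K : \Gamma_K \to K^\times$ with $s_K|_{\Gamma_F} = s_F$; such a choice is possible because $\Gamma_F \hookrightarrow \Gamma_K$ is an inclusion of divisible ordered abelian groups. A direct unravelling of the definition of initial form then shows that, for every $f \in F[x_0, \ldots, x_n]$ and every weight $\gamma$, the $K$-initial form $\ini_\gamma^K(f) \in k[x_0, \ldots, x_n]$ is the image of the $F$-initial form $\ini_\gamma^F(f)$ under the natural inclusion $k_F \hookrightarrow k$, where $k_F$ denotes the residue field of $F$. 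Consequently the $k$-ideal $\langle \ini_\gamma^K(f_i) \rangle$ is the scalar extension of the $k_F$-ideal $\ini_\gamma^F(I) = \langle \ini_\gamma^F(f_i) \rangle$, and hence $\dim_k \langle \ini_\gamma^K(f_i) \rangle_d = \dim_{k_F}(\ini_\gamma^F I)_d$ for every $d$.

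The main input is then the fact that the initial-ideal construction preserves Hilbert functions in the valued setting: for a homogeneous ideal $J$ in a polynomial ring over an algebraically closed valued field $L$ with residue field $k_L$, one has $\dim_{k_L}(\ini_\gamma J)_d = \dim_L J_d$ for every $d$. Applied to $I$ over $F$ and to $I \cdot K[x_0, \ldots, x_n]$ over $K$, and combined with the flatness identity $\dim_K (I \cdot K[x_0, \ldots, x_n])_d = \dim_F I_d$, this yields
\begin{equation*}
\dim_k \ini_\gamma(I \cdot K[x_0, \ldots, x_n])_d = \dim_F I_d = \dim_{k_F}(\ini_\gamma^F I)_d = \dim_k \langle \ini_\gamma^K(f_i) \rangle_d,
\end{equation*}
which together with the trivial inclusion forces equality in each degree and hence globally.

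The main technical obstacle is the Hilbert function preservation just invoked. In the classical weighted setting one argues via a flat one-parameter deformation of $I$ to $\ini_\gamma I$; in the valued setting one proceeds analogously by identifying the associated graded of the $\gamma$-weight filtration on $L[x_0, \ldots, x_n]_d$ with $k_L[x_0, \ldots, x_n]_d$ via the splitting $s_L$, and observing that $(\ini_\gamma J)_d$ corresponds to the graded $k_L$-submodule induced by $J_d$. This should follow from the universal-Gr\"obner-basis machinery developed in \cite{MS} (in particular Corollary 2.5.11 together with Proposition 2.6.1), but pinning down a fully explicit statement adapted to our mixed-characteristic-of-valuations setup is where the real work of the proof lies.
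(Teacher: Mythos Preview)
Your Hilbert-function approach is correct and gives a genuinely different organisation from the paper's proof. The paper argues by first perturbing $\gamma$ to $\gamma+\epsilon\gamma'$ so that $\ini_{\gamma}(I)$ becomes a monomial ideal, then uses the explicit basis $\{h_u\}_{u\in U}$ of $I_d$ from \cite[proof of Lemma~2.4.8]{MS} (with $\ini_\gamma(h_u)=x^u$) to write any $f\in J_d$ as $\sum_u\lambda_u h_u$ with $\lambda_u\in K$, and then locates by a direct valuation computation a monomial $x^{u_0}\in\ini_\gamma(I)$ that must survive in $\ini_\gamma(f)$, contradicting the assumption that some $g\in\ini_\gamma(J)$ avoids $\ini_\gamma(I)$ entirely. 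Your route instead black-boxes this into the identity $\dim_{k_L}(\ini_\gamma J)_d=\dim_L J_d$ and matches dimensions along the chain $k\leftarrow K\leftarrow F\to k_F\hookrightarrow k$.

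Two remarks. First, the valued Hilbert-function identity you invoke is not, as far as I can tell, available as a single citable statement in \cite{MS} for arbitrary $\gamma$; its standard proof passes through exactly the perturbation-to-monomial step plus \cite[Lemma~2.4.8]{MS} that the paper uses directly. So you are not gaining technical economy, only relocating the hard step into a lemma you still have to prove---which you honestly acknowledge. Second, be careful with ``$\ini_\gamma^F(I)=\langle\ini_\gamma^F(f_i)\rangle$'': you are applying the universal Gr\"obner basis hypothesis over $F$ at a weight $\gamma$ lying in $\Gamma_K^{n+1}$, not a priori in $\Gamma_F^{n+1}$. This is harmless once both value groups are embedded in $\mathbb{R}$ and one reads the \cite{MS} definition of universal Gr\"obner basis as ranging over all of $\mathbb{R}^{n+1}$, which is exactly the convention the paper adopts just before the lemma; but it is worth saying explicitly in your write-up.
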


    \begin{proof}
    Let $J$ denote the ideal generated by $I$ in $K[x_0,\hdots,x_n]$. Let $\gamma \in \Gamma^{n+1}$. We want to show that $\ini_\gamma(f_1), \hdots, \ini_\gamma(f_N)$ generate $\ini_\gamma(J)$ in $k[x_0,\hdots,x_n]$. Let $\gamma' \in \Gamma^{n+1}$ arbitrary. It follows from \cite[Lemma 2.4.6 and Corollary 2.4.10]{MS} that for a sufficiently small $\epsilon \in \mathbb{Q}_{>0}$, we have
    \[ \ini_{\gamma+\epsilon \gamma'}(J) = \ini_{\gamma'}(\ini_\gamma(J))\]
    and
    \[ \ini_{\gamma+\epsilon \gamma'}(f_i) = \ini_{\gamma'}(\ini_\gamma(f_i))\]
    for $i = 1, \hdots, N$.
    If we can show that the ideal $\ini_{\gamma+\epsilon \gamma'}(J)$ is generated by $\ini_{\gamma+\epsilon \gamma'}(f_1), \hdots, \ini_{\gamma+\epsilon \gamma'}(f_N)$, then we have that $\ini_\gamma(f_1),\dots,\ini_\gamma(f_N)$ is a Gr\"obner basis for $\ini_\gamma(J)$ with respect to $\gamma'$. We can then conclude, from the fact that any Gr\"obner basis generates its ideal in the homogeneous case (\cite[Remark 2.4.4]{MS}), that also $\ini_\gamma(J)$ is generated by $\ini_\gamma(f_1), \hdots, \ini_\gamma(f_N)$.
    
    We are free in how to choose $\gamma'$ and so we can take $\gamma'$ such that the values $\langle u, \gamma' \rangle \in \Gamma$ are pairwise distinct for all $u \in \mathbb{Z}^{n+1}$ such that $x^u$ appears in some $f_i$ with a non-zero coefficient. It follows that, for $\epsilon$ small enough, in all initial forms $\ini_{\gamma+\epsilon \gamma'}(f_i)$, only one monomial occurs with a non-zero coefficient and so $\ini_{\gamma + \epsilon \gamma'}(I)$ is generated by monomials. From now on, we replace $\gamma$ with some such $\gamma + \epsilon \gamma'$, which also lies in $\Gamma^{n+1}$ since $\Gamma$ is divisible, and assume that $\ini_\gamma(I)$ is generated by monomials. We want to show that $\ini_\gamma(J)$ is generated by $\ini_\gamma(f_1), \hdots, \ini_\gamma(f_N)$, which, since $\{f_1,\hdots,f_N\}$ is a universal Gr\"obner basis for $I$, is equivalent to $\ini_\gamma(J)$ being generated by $\ini_\gamma(I)$ in $k[x_0,\hdots,x_n]$.

    If not, then there exists $g \in \ini_\gamma(J)$ such that $g$ is homogeneous and every monomial that occurs in $g$ with a non-zero coefficient does not lie in the ideal generated by $\ini_\gamma(I)$ in $k[x_0,\hdots,x_n]$. By \cite[Lemma 2.4.2]{MS} and \cite{MSErrata}, there exists $f \in J$ such that $g = \ini_\gamma(f)$. We can and will assume that $f$ is homogeneous of degree $\deg g$.

    Let $U$ denote the set of exponents of monomials of degree $\deg f$ in $\ini_\gamma(I)$. It follows from \cite[proof of Lemma 2.4.8]{MS} that there is a basis $\{h_u\}_{u \in U}$ of the homogeneous part of $I$ of degree $\deg f$ such that $\ini_\gamma(h_u)=x^u$ for each $u \in U$. After rescaling, we will assume that the coefficient of $x^u$ in $h_u$ is $1$. Hence, $f = \sum_{u \in U}{\lambda_u h_u}$ for some $\lambda_u \in K$ ($u \in U$). For $u' \in \mathbb{Z}^{n+1}$, we denote by $c_{uu'}$ the coefficient of $x^{u'}$ in $h_u$. Since $\ini_\gamma(h_u) = x^u$, we have that
    \[ \val(c_{uu'}) + \langle u', \gamma \rangle >\val(1)+\langle u, \gamma \rangle = \langle u, \gamma \rangle\]
    for all $u \in U$ and all $u' \neq u$.

    But now
    \begin{align*}
    \min_{u' \in \mathbb{Z}^{n+1}}\left\{\val\left(\sum_{u \in U}{\lambda_uc_{uu'}}\right) + \langle u', \gamma \rangle\right\} &\geq \min_{u,u'}\left\{\val(\lambda_u) + \val(c_{uu'}) + \langle u', \gamma \rangle\right\} \\
    &=\min_{u \in U}\left\{\val(\lambda_u) + \langle u, \gamma \rangle\right\}.
    \end{align*}
    If $u_0 \in U$ is such that $\val(\lambda_{u_0}) + \langle u_0, \gamma \rangle$ is minimal, then
    \begin{align*} \val(\lambda_uc_{uu_0}) + \langle u_0, \gamma \rangle &= \val(\lambda_u) + \val(c_{uu_0}) + \langle u_0, \gamma \rangle\\ &> \val(\lambda_{u}) + \langle u, \gamma \rangle\\ &\geq \val(\lambda_{u_0}) + \langle u_0, \gamma \rangle
    \end{align*}
    for every $u \in U \setminus \{u_0\}$ and so
    \[ \val\left(\sum_{u \in U}{\lambda_u c_{uu_0}}\right) + \langle u_0, \gamma \rangle = \val(\lambda_{u_0}) + \langle u_0, \gamma \rangle .\]
    It follows that $x^{u_0}$ appears in $\ini_\gamma(f) = g$ with a non-zero coefficient, a contradiction.
    \end{proof}

    \begin{rem}\label{rem:constant-coefficients}
    Let now $W \subseteq \mathbb{G}_m^n$ be an algebraic subvariety, which we see as a subvariety of $\mathbb{G}^n_{m,K}$ for some algebraically closed non-trivially valued field extension $K$ of $\mathbb{C}$, with Archimedean value group and with the trivial valuation induced on $\mathbb{C}$. In tropical geometry this is known as the \emph{constant coefficients case}, see \cite[Remark 2.4.5]{MS} for more details.

    Using a universal Gr\"obner basis \cite[Corollary 2.5.11]{MS} and \cite[Proposition 2.6.1]{MS}, we can choose $\trop(W)$ such that each $\tau \in \trop(W)$ is defined by conditions of the form
    \[\val(c_u)+\langle u,\gamma \rangle \leq \val(c_{u'})+\langle u',\gamma \rangle,\]
    which in this case then take the form $\langle u,\gamma \rangle \leq \langle u',\gamma \rangle$ because the universal Gr\"obner basis can be chosen such that all coefficients of all its elements are complex numbers and thus have valuation 0 (being a universal Gr\"obner basis is preserved with respect to the extension $K/\mathbb{C}$ by Lemma \ref{lem:preserve-gr-basis}).
    
    In particular the polyhedral complex $\trop(W)$ can then be chosen to be a fan and so, for each $\tau \in \trop(W)$, we may look at the set $\tau(\mathbb{R})$ of points in $\mathbb{R}^n$ satisfying the given semi-linear inequalities, and similarly for Boolean combinations of polyhedra. Note that, if two finite sets of semi-linear inequalities define the same polyhedron in $\Gamma^n$, then they also do in $\mathbb{R}^n$ by completeness of the theory of non-trivial ordered $\mathbb{Q}$-vector spaces (see \cite[Remark 7.9 in Chapter 1]{Dri98}).
    \end{rem}

    \begin{rem}\label{rem:equal-residue}
    With the same assumptions as in Remark \ref{rem:constant-coefficients}, we now suppose furthermore that the restriction of the residue map to $\mathbb{C}$ induces an isomorphism between $\mathbb{C}$ and the residue field, which yields an embedding of $k$ into $K$.
    
    Returning to Lemma \ref{lem:equal-residue}, if $\tau \in \trop(W)$ is chosen such that $\gamma \in \relint(\tau)$, then we automatically have that $a = s(\gamma) \in J_\tau(K)$ since $s$ is a group homomorphism, $\tau$ is a cone, and it follows from the definition of $J_\tau$ that $s(\lin(\tau)) \subseteq J_\tau(K)$. We now deduce from Proposition \ref{prop:invariant} that
        \[W_\tau(k)=\res((a^{-1} \cdot W(K)) \cap (\mathcal{O}^\times)^n)\]
        for any $a \in J_\tau(K)$ (not necessarily equal to $s(\gamma)$) such that $\val(a) = \gamma$. Note that this statement does not involve the splitting $s$ anymore, which will be convenient for us later.
    \end{rem}

     Finally, we recall the definition of \textit{amoeba}. We will use later that tropicalizations of complex varieties are limits of amoebas.

    \begin{defn}
        Let $\Log:(\ctimes)^n \rightarrow \mathbb{R}^n$ denote the map $$(z_1,\dots,z_n) \mapsto (\log|z_1|,\dots,\log|z_n|).$$ 

        Given an algebraic subvariety $W \subseteq \mathbb{G}_m^n$, the \textit{amoeba} $\mathcal{A}_W$ of $W$ is the image of $W(\mathbb{C})$ under $-\Log$.
    \end{defn}
    
     \subsection{Embedding $\mathbb{C}$ in a large valued field}

    We use an approach based on non-standard analysis: we embed $\mathbb{C}$ into a larger field, naturally endowed with a valuation. This field will contain infinite and infinitesimal elements, which one can intuitively think of as sequences in $\mathbb{C}$ going to $\infty$ or $0$; most of the statements and proofs in the next section could be formulated in the standard language, but that would make them far less readable. 
    
    Consider the model-theoretic structure $\mathbb{R}_{\exp,\sin}$, that is the field of real numbers in the language of ordered rings expanded by constant symbols for all the elements of $\mathbb{R}$ and function symbols for $\exp$ and $\sin$ (here by $\sin$ we mean the \textbf{total} sine function, not the restricted version, so this structure is not o-minimal, and in fact not tame in any sense). We may identify $\mathbb{C}$ with $\mathbb{R}^2$ in the usual way, and using $\exp$ and $\sin$ we may define the complex exponential function in this structure.

    We denote by $\mathfrak{R}$ a proper elementary extension of $\mathbb{R}_{\exp,\sin}$, and by $\mathfrak{C}=\mathfrak{R}+i\mathfrak{R}$ its algebraic closure. We have a canonical map $\mathrm{Re}: \mathfrak{C} \to \mathfrak{R}$, sending $x+iy$ to $x$.

    We may define a map $|\cdot|:\mathfrak{C} \rightarrow \mathfrak{R}$ by setting $|x+iy|=\sqrt{x^2+y^2}$, which, by abuse of language, we call an absolute value  even though its codomain is strictly larger than $\mathbb{R}_{\geq 0}$. We use this to define the \textit{Archimedean valuation} on $\mathfrak{C}$ as the quotient of $\mathfrak{C}$ by the relation $z_1 \sim z_2$ if and only if there is some non-zero $n \in \mathbb{N}$ such that $n|z_1| \geq |z_2|$ and $n|z_2| \geq |z_1|$. We denote by $\Gamma$ the set $\mathfrak{C}^\times/\sim$, by $\infty$ the equivalence class of $0$, and by $\val:\mathfrak{C} \twoheadrightarrow \Gamma \sqcup \{\infty\}$ the quotient map. Endowing $\Gamma \sqcup \{\infty\}$ with the ordering induced by the reverse ordering on the absolute values of elements of $\mathfrak{C}$, and with the sum induced by the product on $\mathfrak{C}$, makes $\Gamma$ into an ordered abelian group and $\val$ into a valuation. Slightly confusingly, even though this is called the Archimedean valuation, the resulting group $\Gamma$ is not Archimedean: the first-order conditions which describe the growth of the real exponential function compared to polynomials force us to have that if $x \in \mathfrak{R}$ has valuation $\gamma < 0$, then $e^x$ has valuation smaller than $n\gamma$ for every $n \in \mathbb{N}$.
    
    The field $\mathfrak{C}$ is then an algebraically closed non-trivially valued field. By \autocite[Lemma 2.1.15]{MS} the valuation has a splitting, which as in the previous section we denote by $s$ (here also, in the reference $\Gamma \subseteq \mathbb{R}$ is assumed but only to deduce that $\Gamma$ is torsion-free). By composing with the absolute value, we may and will assume that $s(\Gamma) \subseteq \mathfrak{R}_{>0}$.
    
    Since $\mathfrak{R}$ is an elementary extension of $\mathbb{R}$ in the language with $\exp$ and $\sin$, it has an exponential function $\mathfrak{R} \rightarrow \mathfrak{R}_{>0}$, which may be extended to an exponential $\mathfrak{C} \rightarrow \mathfrak{C}^\times$ by the usual formula $\exp(x+iy)=\exp(x)(\cos(y)+i\sin(y))$.  For every $r \in \mathfrak{R}$ we have a well-defined power function on the positive elements defined by $x \mapsto x^r\coloneqq \exp(r\log(x))$, where $\log$ here denotes the (well-defined) inverse of the function $\exp: \mathfrak{R} \rightarrow \mathfrak{R}_{>0}$. If $v = (v_1, \hdots, v_n) \in (\mathfrak{R}_{>0})^n$ and $A = (a_{i,j}) \in \mathrm{Mat}_{d \times n}(\mathfrak{R})$, then we set $v^A=(\Pi_{i=1}^nv_i^{a_{1,i}}, \hdots, \Pi_{i=1}^nv_i^{a_{d,i}})$.

      The valuation ring $\mathcal{O} \subseteq \mathfrak{C}$ consists of those elements whose absolute value lies in the convex hull of $\mathbb{R}$ in $\mathfrak{R}$. The maximal ideal $\mathfrak{m}$ consists of the elements whose absolute value is smaller than any positive real. Any element $z \in \mathcal{O}$ can be written uniquely as $w+\epsilon$, where $w \in \mathbb{C}$ is the complex number closest to $z$ and $\epsilon \in \mathfrak{m}$. The map $z \mapsto w$ is a homomorphism $\mathcal{O} \to \mathbb{C}$ that induces an isomorphism $\mathcal{O}/\mathfrak{m} \simeq \mathbb{C}$. We use this to identify the residue field $\mathcal{O}/\mathfrak{m}$ with $\mathbb{C}$ and write $\res(z)$ for $w$.
      
      It is easy to see that for $z \in \mathcal{O}$ we have $$\exp(z)=\exp(\res(z)+\epsilon)=\exp(\res(z))\exp(\epsilon) \in \exp(\res(z)) \cdot (1+\mathfrak{m}) \subseteq \mathcal{O}^\times,$$ from which it follows that $\exp$ induces a function  $\mathcal{O} \rightarrow \mathcal{O}^\times$ which satisfies $\res(\exp(z))=\exp(\res(z))$.

      Again, we abuse notation and denote also the Cartesian powers of $\mathrm{Re}$, $\val$, $s$, $\exp$, $\log$, and $\res$ by the same symbol respectively.

    \begin{lem}\label{lem:reduction-linear-subspace}
    Let $L \leq \mathbb{G}_{a,\mathfrak{C}}^n$ be a linear subspace and let
    \[ I = \left\{(a_1,\hdots,a_n) \in \mathcal{O}^n \bigg| \sum_{i=1}^{n}{a_ix_i} = 0 \quad \forall (x_1,\hdots,x_n) \in L(\mathfrak{C}) \right\}.\]
    Set $L(\mathcal{O}) = L(\mathfrak{C}) \cap \mathcal{O}^n$. Then
    \[ \res(L(\mathcal{O})) = \left\{(v_1,\hdots,v_n) \in \mathbb{C}^n \bigg| \sum_{i=1}^{n}{b_iv_i} = 0 \quad \forall b = (b_1,\hdots,b_n) \in \res(I)\right\}.\]
    Furthermore, $\dim \res(L(\mathcal{O})) = \dim L$ and $I$ is a free $\mathcal{O}$-module of rank $n- \dim L$.
    \end{lem}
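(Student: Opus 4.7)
Write $d = \dim L$ and $N = n - d$. Let $V^{\ast} \subseteq \mathfrak{C}^n$ denote the $\mathfrak{C}$-linear subspace of vectors $(a_1,\ldots,a_n)$ for which the linear form $\sum a_i x_i$ vanishes identically on $L(\mathfrak{C})$; then $\dim_{\mathfrak{C}} V^{\ast} = N$ and $I = V^{\ast} \cap \mathcal{O}^n$. The plan is to produce an explicit $\mathcal{O}$-basis of $I$ whose residues in $\mathbb{C}^n$ are $\mathbb{C}$-linearly independent; the three assertions then follow from direct linear algebra.

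First, take any $\mathfrak{C}$-basis of $V^{\ast}$ and arrange it as the rows of a matrix $G \in \mathfrak{C}^{N \times n}$, necessarily of rank $N$. Among the finitely many $N \times N$ submatrices of $G$ with non-zero determinant, pick one, call it $G_0$, whose determinant has \emph{minimal valuation}. After a permutation of coordinates we may assume that $G_0$ consists of the first $N$ columns and write $G = (G_0 \mid G_1)$. Replace the chosen basis by the rows $h_1,\ldots,h_N$ of $G_0^{-1} G = (\mathrm{Id}_N \mid G_0^{-1} G_1)$: by Cramer's rule each entry of $G_0^{-1} G_1$ is $\pm \det(G_0')/\det(G_0)$ for some other $N \times N$ submatrix $G_0'$ of $G$, which by the minimality of $\val(\det G_0)$ has non-negative valuation. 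Thus, setting $M := G_0^{-1} G_1 \in \mathcal{O}^{N \times d}$, the rows $h_i \in \mathcal{O}^n$ have residues forming the rows of $(\mathrm{Id}_N \mid \res(M))$, so $\res(h_1),\ldots,\res(h_N)$ are $\mathbb{C}$-linearly independent in $\mathbb{C}^n$.

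For any $f \in I$, expand $f = \sum_i \lambda_i h_i$ uniquely with $\lambda_i \in \mathfrak{C}$; since the first $N$ columns of $(\mathrm{Id}_N \mid M)$ form the identity, $\lambda_i$ is the $i$-th coordinate of $f$, hence lies in $\mathcal{O}$. So $I = \bigoplus_{i=1}^N \mathcal{O} h_i$ is free of rank $N$, and $\res(I) = \bigoplus_{i=1}^N \mathbb{C}\cdot \res(h_i)$. Accordingly the right-hand side of the claimed equality is the kernel of $(\mathrm{Id}_N \mid \res(M))$ on $\mathbb{C}^n$, that is, $\{(-\res(M)w, w) : w \in \mathbb{C}^d\}$. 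On the other hand the equations $h_i \cdot x = 0$ read $x_i = -\sum_j M_{ij} x_{N+j}$ for $i = 1, \ldots, N$, so $L(\mathfrak{C}) = \{(-My, y) : y \in \mathfrak{C}^d\}$; since $M$ has entries in $\mathcal{O}$, this parametrization sends $\mathcal{O}^d$ bijectively onto $L(\mathcal{O})$. Taking residues gives $\res(L(\mathcal{O})) = \{(-\res(M)w, w) : w \in \mathbb{C}^d\}$, which coincides with the right-hand side and has dimension $d = \dim L$.

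The main obstacle is the minimal-valuation minor step: it is precisely the choice of $G_0$ with minimal $\val(\det G_0)$ that forces $G_0^{-1} G_1$ to have entries in $\mathcal{O}$, and thereby allows one to read off both the free $\mathcal{O}$-structure of $I$ and the computation of its residue in one stroke. The remainder of the argument is linear algebra bookkeeping.
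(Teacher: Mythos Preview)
Your proof is correct and takes a genuinely different route from the paper's. The paper argues by induction on $\dim L$: given a non-zero $v \in L(\mathfrak{C})$, it normalizes $v$ into $\{1\} \times \mathcal{O}^{n-1}$, then conjugates by some $A \in \gl_n(\mathcal{O})$ sending $v$ to $(1,0,\ldots,0)^t$, thereby splitting off a factor $\mathbb{G}_{a,\mathfrak{C}}$ and reducing to a subspace $L'' \leq \mathbb{G}_{a,\mathfrak{C}}^{n-1}$. You instead work directly with the annihilator $V^{\ast}$: the minimal-valuation minor trick puts a basis of $V^{\ast}$ into the shape $(\mathrm{Id}_N \mid M)$ with $M$ integral over $\mathcal{O}$ in one step, after which the freeness of $I$, the description of $\res(I)$, and the explicit parametrization of $L(\mathcal{O})$ all fall out by inspection. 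Your approach avoids induction and yields an explicit $\mathcal{O}$-basis of $I$ with residues in echelon form; the paper's approach is perhaps more conceptual in that it makes visible the compatibility of the construction with $\gl_n(\mathcal{O})$-changes of coordinates, which is what drives the induction. Both arguments ultimately rest on the same underlying fact---that a full-rank matrix over a valuation ring can be brought to a form with an identity block---but you access it via Cramer's rule on a well-chosen minor rather than by successive column operations.
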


    \begin{proof}
    We induct on $\dim L$, the case $L = \{0\}$ being trivial.

    If $L \neq \{0\}$, then $L(\mathfrak{C})$ contains some non-zero vector $v = (v_1,\hdots,v_n)$. After rescaling and permuting the coordinates, we can assume without loss of generality that $v \in \{1\} \times \mathcal{O}^{n-1}$. There exists an invertible matrix $A \in \gl_n(\mathcal{O})$ such that $Av = (1,0,\hdots,0)^t$.

    Set $L' = A \cdot L$, then $L' = \mathbb{G}_{a,\mathfrak{C}} \times L''$ for some linear subspace $L''$ of $\mathbb{G}_{a,\mathfrak{C}}^{n-1}$ and $L'(\mathcal{O}) = A \cdot L(\mathcal{O})$. Furthermore, if
    \[ I' = \left\{(a_1,\hdots,a_n) \in \mathcal{O}^n \bigg| \sum_{i=1}^{n}{a_ix_i} = 0 \quad \forall (x_1,\hdots,x_n) \in L'(\mathfrak{C}) \right\},\]
    then
    \[ I' = \{a \cdot A^{-1} \mid a \in I\}.\]
    Moreover, $I' \subseteq \{0\} \times \mathcal{O}^{n-1}$ and in fact $I' = \{0\} \times I''$, where
    \[ I'' = \left\{(a_1,\hdots,a_{n-1}) \in \mathcal{O}^{n-1} \bigg| \sum_{i=1}^{n-1}{a_ix_i} = 0 \quad \forall (x_1,\hdots,x_{n-1}) \in L''(\mathfrak{C}) \right\}.\]

    By induction,
    \[ \res(L(\mathcal{O})) = \res(A)^{-1} \cdot \res(L'(\mathcal{O})) = \res(A)^{-1} \cdot (\mathbb{C} \times \res(L''(\mathcal{O})))\]
    is the linear subspace defined by the linear equations with coefficient vectors in the set
    \[ \{(0,b) \cdot \res(A) \mid b \in \res(I'')\} = \{\res(a \cdot A) \mid a \in I'\} = \res(I).\]
     Furthermore, $\dim \res(L(\mathcal{O})) = 1 + \dim \res(L''(\mathcal{O})) = 1 + \dim L'' = \dim L' = \dim L$ and $I$ is isomorphic to $I''$ as an $\mathcal{O}$-module, so free of rank $(n-1) - \dim L'' = n - \dim L$.
    \end{proof}

     If $L \leq \mathbb{G}_{a,\mathfrak{C}}^n$ is a linear subspace, we can therefore denote by $\res(L) \leq \mathbb{G}_{a}^n$ the linear subspace of dimension $\dim L$ that is defined by the residues of the equations defining $L$ whose coefficients all lie in $\mathcal{O}$. We remark that Lemma \ref{lem:reduction-linear-subspace} stays true, by the same proof, if we replace $\mathfrak{C},\mathbb{C}$ and $\mathcal{O}$ by $\mathfrak{R},\mathbb{R}$ and $\mathcal{O} \cap \mathfrak{R}$ respectively.

    We note that while all the results from tropical geometry described in the previous subsection are proved under the hypothesis that $\Gamma$ is Archimedean, we will only apply them to algebraic subvarieties $W \subseteq \mathbb{G}_m^n$, so Remark \ref{rem:constant-coefficients} applies with $K = \mathfrak{C}_0$ for some algebraically closed non-trivially valued subfield $\mathfrak{C}_0$ of $\mathfrak{C}$ with Archimedean value group $\Gamma_0$. Whenever we take the tropicalization of such a $W$ in the following, it will be chosen as in Remark \ref{rem:constant-coefficients}. Because of Lemma \ref{lem:preserve-gr-basis}, any such tropicalization of $W$ is also a tropicalization of $W$ seen as a subvariety defined over the algebraic closure $\overline{\mathbb{C}((\Gamma_0))}$ of the Hahn field, and vice versa. Furthermore, if $\tau$ is a polyhedron in such a tropicalization, then, again because of Lemma \ref{lem:preserve-gr-basis} and Remark \ref{rem:constant-coefficients}, the initial form $W_\tau$, defined via some element of $\relint(\tau)$, does not depend on the ambient field.
    
    Implicitly, when we apply the results from the previous subsection to such varieties $W$, we are considering them as subvarieties of $\mathbb{G}_{m,\mathfrak{C}_0}^n$ and ``lifting'' the relevant results to $\mathfrak{C}$ using model completeness of the theory of algebraically closed non-trivially valued fields of residue characteristic $0$, in the language with sorts for the valued field, residue field, and value group and function symbols for the valuation and a binary variant of the residue map. Model completeness and completeness of this theory are proved in \cite[Theorems 3.4.21 and 4.3.19]{Robinson56}, in the language with a sort for the valued field and a sort for the value group. Since the residue field is the quotient of a definable set by a definable equivalence relation, it is easy to see that adding to the language a sort for the residue field does not affect completeness. Completeness and \cite[Theorem 2.1.1]{HHM} imply that this theory eliminates quantifiers in the three-sorted language, from which model completeness follows. This ``lifting" is sufficiently straightforward that we will never say it explicitly and it will concern only Theorem \ref{thm:structure-theorem}, the fact that $\val(W(\mathfrak{C}))=\supp(\trop(W))$ whenever $W$ is defined over $\mathbb{C}$, and the strengthened form of equality \eqref{eq:ini-var-equality} in Lemma \ref{lem:equal-residue} which we discussed in Remark \ref{rem:equal-residue}.
    
\section{Rotundity and geometrical non-degeneracy}\label{sec:rot-and-geo}

In this section we study the notions of rotundity and geometrical non-degeneracy. In Proposition \ref{prop:characterize-rotund} we give several characterizations of rotundity for algebraic subvarieties of $\mathbb{G}_a^n \times \mathbb{G}_m^n$ of the form $L \times W$, where $L \leq \mathbb{G}_a^n$ is a linear subspace defined over $\mathbb{R}$ and $W \subseteq \mathbb{G}_m^n$ is an algebraic subvariety whose irreducible components all have the same dimension $\codim L$. The main result of this section is Theorem \ref{thm:main-tropical}, in which we show that if we further assume that $W$ is geometrically non-degenerate, then one of the equivalent conditions from Proposition \ref{prop:characterize-rotund} can be strengthened to a version that is uniform in $L$.

Recall from Section \ref{sec:preli} that we are working in an algebraically closed valued field $\mathfrak{C}$, which is obtained as the algebraic closure of a proper elementary extension $\mathfrak{R}$ of $\mathbb{R}_{\exp, \sin}$ and endowed with the Archimedean valuation $\val$. We have also fixed a splitting $s$ of $\val$, $\res$ denotes the residue map, and all function symbols are used also for their Cartesian powers when necessary. 

In this section, the balls we consider as well as their closures are to be understood with respect to the Euclidean norm on the sets of $\mathbb{C}$-points and $\mathfrak{C}$-points of the algebraic groups we work with, so $$|(z_1,\dots,z_n)|=\sqrt{|z_1|^2+\cdots+|z_n|^2}.$$ To avoid ambiguities, we will always use notation of the form $B(z,r)(\mathfrak{C})$ when we consider $\mathfrak{C}$-points. We will sometimes also use the maximum norm $|\cdot|_\infty$ defined by
\[ |(z_1,\dots,z_n)|_\infty = \max_{i = 1, \hdots, n}|z_i|,\]
however, balls are always to be understood with respect to the Euclidean norm. We denote by $\mathbb{S}_1(\mathbb{C})$ the unit circle in $\mathbb{C}$ and by $\mathbb{S}_1$ the corresponding definable set in $\mathbb{R}_{\exp,\sin}$.

As in \cite{Chirka_1989}, a complex analytic set in a complex manifold $M$ is a set $S \subseteq M$ such that for each $x \in S$, there is a neighborhood $U$ of $x$ in $M$ such that $U \cap S$ is the zero set of finitely many holomorphic functions on $U$. So for example an open subset of $\mathbb{C}^n$ is a complex analytic set in $\mathbb{C}^n$.

    \begin{nota}
        For a field $K$, we denote by $G_K(d,n)$ the Grassmannian of linear subspaces $L \leq \mathbb{G}_{a,K}^n$ of dimension $d$. 
    \end{nota}

    \begin{rem}
        We will see the Grassmannian $G_{\mathbb{R}}(d,n)$ as a quotient of the set of $(n-d) \times n$ matrices of rank $n-d$ by the definable equivalence relation of having the same kernel. Then two matrices are equivalent if and only if they define the same subspace. 

        Using definable choice functions for this equivalence relation, which exist by \cite[Proposition 1.2 in Chapter 6]{Dri98} in the semialgebraic structure on $\mathbb{R}$ and so \textit{a fortiori} in $\mathbb{R}_{\exp,\sin}$, we can then see this as a definable set in $\mathbb{R}_{\exp,\sin}$ in which each subspace is represented by a unique matrix. We denote this definable set by $G(d,n)$. Then, $G_{\mathfrak{R}}(d,n) = G(d,n)(\mathfrak{R})$.
        
        A definable open neighborhood of an element $L \in G_{\mathbb{R}}(d,n)$ is the image in $G(d,n)$ of a definable open subset of the set of $(n-d) \times n$ matrices of rank $n-d$ that contains some matrix whose kernel is $L$.
    \end{rem}

    We now prove a version of the Open Mapping Theorem from complex analysis which holds for certain functions over our field $\mathfrak{C}$. This will be used in the proofs of Proposition \ref{prop:characterize-rotund} and Theorem \ref{thm:main-tropical}.

       \begin{lem}\label{lem:open-mapping-in-frakC} 
        Let $L \leq \mathbb{G}_{a, \mathfrak{C}}^n$ be a linear subspace defined over $\mathfrak{R}$, and let $W \subseteq \mathbb{G}_{m,\mathfrak{C}}^n$ be an algebraic subvariety such that every irreducible component of $W$ has codimension $d:=\dim L$. Let $\delta:(L \times W)(\mathfrak{C}) \rightarrow (\mathfrak{C}^\times)^n$ be the map defined by $(\ell,w) \mapsto \frac{w}{\exp(\ell)}$ and fix $\epsilon_0, \epsilon_1 \in \mathbb{R}_{>0}$.

        Suppose that $\ell_0 \in L(\mathfrak{C})$, $w_0 \in W(\mathfrak{C})$ satisfy $|w_0^{-1}|_\infty < \epsilon_1^{-1}$ and
        \[\delta^{-1}(B(z,\epsilon_0)(\mathfrak{C})) \cap  B((\ell_0,w_0),\epsilon_1)(\mathfrak{C}) = \delta^{-1}(B(z,\epsilon_0)(\mathfrak{C})) \cap \overline{B((\ell_0,w_0),\epsilon_1)}(\mathfrak{C}),\]
        where $z = \delta(\ell_0,w_0)$.
        Then
        \[ B(z,\epsilon_0)(\mathfrak{C}) \subseteq \delta((L \times W)(\mathfrak{C})).\]
    \end{lem}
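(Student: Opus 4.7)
The plan is to prove the lemma in two steps: first, a reduction to the classical complex-analytic setting over $\mathbb{C}$ via an elementarity argument; second, a direct proof in that setting using Remmert's Open Mapping Theorem together with a connectedness argument.

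For the reduction, I would identify $\mathfrak{C}$ with $\mathfrak{R}[i]$ and fix the (finite) number and degrees of the defining equations of $L$ and $W$; then the assertion of the lemma becomes a first-order sentence in the language of $\mathbb{R}_{\exp,\sin}$ with parameters for the coefficients of the equations defining $L$ and $W$, the real and imaginary parts of $\ell_0$ and $w_0$, and $\epsilon_0,\epsilon_1$. The complex exponential appearing in $\delta$ is definable from $\exp$ and $\sin$ on $\mathfrak{R}$, and the Euclidean balls are semialgebraic. Since $\mathfrak{R}$ is an elementary extension of $\mathbb{R}_{\exp,\sin}$, it suffices to establish the conclusion in the standard setting, in which $L$ is defined over $\mathbb{R}$, $W$ over $\mathbb{C}$, $\ell_0\in L(\mathbb{C})$, $w_0\in W(\mathbb{C})$, and $\epsilon_0,\epsilon_1\in\mathbb{R}_{>0}$.

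In the classical case, set $U := \delta^{-1}(B(z,\epsilon_0)) \cap B((\ell_0,w_0),\epsilon_1)$, which is non-empty (it contains $(\ell_0,w_0)$) and open in the complex-analytic set $(L\times W)(\mathbb{C})$, which is of pure dimension $n$ by the codimension hypothesis on $W$. The condition $|w_0^{-1}|_\infty<\epsilon_1^{-1}$ guarantees that no $w$-coordinate vanishes on $\overline{B((\ell_0,w_0),\epsilon_1)}$, so $\delta$ is defined on the whole closed ball. The preimage hypothesis gives $U=\delta^{-1}(B(z,\epsilon_0))\cap \overline{B((\ell_0,w_0),\epsilon_1)}$, from which a short compactness argument shows that the restriction $\delta|_U:U\to B(z,\epsilon_0)$ is a proper holomorphic map.

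To conclude $\delta(U)=B(z,\epsilon_0)$, I would argue by connectedness: $\delta(U)\cap B(z,\epsilon_0)$ contains $z$, and it suffices to show it is both closed and open in $B(z,\epsilon_0)$. Closedness is immediate from compactness of $\overline{B}$ and the preimage hypothesis: a convergent sequence $y_k\to y$ in $\delta(U)\cap B(z,\epsilon_0)$ admits preimages $x_k\in U$ with a subsequential limit $x\in\overline{B}$, and the hypothesis forces $x\in U$. For openness, at each $x\in U$ the fiber $\delta^{-1}(\delta(x))\cap U$ coincides with $\delta^{-1}(\delta(x))\cap \overline{B}$ (applying the preimage hypothesis to $y=\delta(x)\in B(z,\epsilon_0)$), hence is compact; as a compact complex-analytic subset of $\mathbb{C}^{2n}$ it is finite, so $x$ is isolated in its fiber. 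Since $(L\times W)(\mathbb{C})$ is of pure dimension $n$ equal to the dimension of the target, Remmert's Open Mapping Theorem yields that $\delta$ is open at $x$, so a neighborhood of $\delta(x)$ lies in $\delta(U)$.

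The main subtlety will be invoking Remmert's theorem in the possibly singular or reducible setting for $W$; this is handled by working on each irreducible component of $L\times W$ separately, using that the codimension condition forces pure dimension $n$, and that the isolation of $x$ in the full fiber implies isolation on every irreducible component through it.
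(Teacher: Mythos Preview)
Your proposal is correct and follows essentially the same approach as the paper: reduce to $\mathbb{C}$ by elementarity (the paper phrases this via a parametric family definable over $\mathbb{C}$), then in the classical setting show that $\delta(U)=B(z,\epsilon_0)$ by a connectedness argument, with closedness coming from compactness of $\overline{B((\ell_0,w_0),\epsilon_1)}\cap (L\times W)(\mathbb{C})$ and openness from the Open Mapping Theorem once the fibers of $\delta|_U$ are shown to be finite. The only cosmetic difference is that the paper deduces finiteness of the fibers by showing each irreducible component $A$ of a fiber meeting $U$ satisfies $A\cap B=A\cap\overline{B}$, hence $A\subseteq B$ by connectedness, hence $A$ is compact and therefore a point; your variant, observing directly that $\delta^{-1}(\delta(x))\cap B$ is a compact analytic subset of the open ball and hence finite, is equivalent.
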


    \begin{proof}
    Let $\mathcal{W}$ be a parametric family, definable over $\mathbb{C}$, of equidimensional algebraic subvarieties of codimension $d$ such that the given $W$ belongs to $\mathcal{W}$. Since $\mathfrak{R}$ is an elementary extension of $\mathbb{R}$ and the statement is first-order expressible, it suffices to prove the lemma for every subvariety in the family over a $\mathbb{C}$-point of the base and every linear subspace $L \in G_{\mathbb{R}}(d,n)$. From now on, we will therefore denote by $W$ an equidimensional subvariety of $\mathbb{G}^n_m$ of codimension $d$, by $L$ a $d$-dimensional linear subspace of $\mathbb{G}^n_a$, defined over $\mathbb{R}$, and by $\delta$ the map $(L \times W)(\mathbb{C}) \to (\ctimes)^n$ defined by $(\ell,w) \mapsto \frac{w}{\exp(\ell)}$.

        Suppose that there exist $\ell_0 \in L(\mathbb{C})$, $w_0 \in W(\mathbb{C})$ satisfying the hypotheses of the lemma and set $z = \delta(\ell_0,w_0)$. We have to show that
         \[ B(z,\epsilon_0) \subseteq \delta((L \times W)(\mathbb{C})).\]
       
         Set $U = \delta^{-1}(B(z,\epsilon_0)) \cap B((\ell_0,w_0),\epsilon_1)$, then $U$ is a non-empty open subset of $(L \times W)(\mathbb{C})$. We want to show that
         \[ \delta(U) = B(z,\epsilon_0).\]
         Since $B(z,\epsilon_0)$ is connected, it is enough to show that $\delta(U) \subseteq B(z,\epsilon_0)$
         is open as well as relatively closed.

         We have by hypothesis that
         \[ \delta(U) = \delta(\overline{B((\ell_0,w_0),\epsilon_1)} \cap (L \times W)(\mathbb{C})) \cap B(z,\epsilon_0).\]
         Now $\overline{B((\ell_0,w_0),\epsilon_1)} \cap (L \times W)(\mathbb{C})$ is compact since $|w_0^{-1}|_\infty < \epsilon_1^{-1}$, so also $\delta(\overline{B((\ell_0,w_0),\epsilon_1)} \cap (L \times W)(\mathbb{C}))$ is compact and hence $\delta(U)$ is relatively closed in $B(z,\epsilon_0)$.

         We claim that $\delta|_U: U \to \mathbb{G}^n_m(\mathbb{C})$ is an open map. Clearly, this will establish that $\delta(U)$ is open. By the Open Mapping Theorem \autocite[p. 107]{grauert2012coherent}, it is enough to show that every fiber of $\delta|_U$ is discrete. Let therefore $A$ be a complex analytic irreducible component of a fiber of $\delta$ over a point in $\delta(U)$ such that $A \cap U \neq \emptyset$. We know that
         \[ A \cap B((\ell_0,w_0),\epsilon_1) = A \cap \overline{B((\ell_0,w_0),\epsilon_1)},\]
         which implies that $A \subseteq B((\ell_0,w_0),\epsilon_1)$ since $A$ is connected. In particular, $A = A \cap \overline{B((\ell_0,w_0),\epsilon_1)}$ is compact. By \cite[Proposition 1 in Section 3.3 of Chapter 1]{Chirka_1989}, this implies that $A$ is finite, so a singleton and we are done.
    \end{proof}

\subsection{Rotundity}\label{subsec:rotund}

    Let $J$ be an algebraic subgroup of $\mathbb{G}^n_m$. We set $TJ = LJ \times J$, where $LJ$ denotes the Lie algebra of $J$. We identify $TJ$ with an algebraic subgroup of $\mathbb{G}_a^n \times \mathbb{G}_m^n$. We denote by $\pi_{J}:\mathbb{G}_m^n \twoheadrightarrow  \mathbb{G}_m^n/J \cong \mathbb{G}_m^{n-\dim J}$ the quotient mapping, and by $\pi_{LJ}$ and $\pi_{TJ}$ the maps defined analogously on $\mathbb{G}_a^n$ and $\mathbb{G}_a^n \times \mathbb{G}^n_m$ respectively.

    \begin{defn}
        Let $V \subseteq \mathbb{G}_a^n \times \mathbb{G}_m^n$ be an irreducible algebraic subvariety. Then $V$ is \textit{rotund} if for every algebraic subgroup $J \leq \mathbb{G}_m^n$, $$\dim \pi_{TJ}(V) \geq n-\dim J.$$
        An arbitrary algebraic subvariety of $\mathbb{G}_a^n \times \mathbb{G}_m^n$ is \textit{rotund} if one of its irreducible components is rotund.
    \end{defn}

    Note that, if $V \subseteq \mathbb{G}_a^n \times \mathbb{G}_m^n$ is rotund and $J \leq \mathbb{G}_m^n$ is an algebraic subgroup, then the Zariski closure of $\pi_{TJ}(V)$ is again rotund.

    The following proposition gives some characterizations of rotundity for  subvarieties of $\mathbb{G}_a^n \times \mathbb{G}_m^n$ which split as a product $L \times W$ for some linear subspace $L \leq \mathbb{G}_a^n$, defined over $\mathbb{R}$, and some algebraic subvariety $W \subseteq \mathbb{G}_m^n$ that is equidimensional of dimension $\codim L$.

    \begin{prop}\label{prop:characterize-rotund}
        Let $L \leq \mathbb{G}^n_{a}$ be a linear subspace defined over $\mathbb{R}$, and let $W \subseteq \mathbb{G}_{m}^n$ be an  algebraic subvariety such that every irreducible component of $W$ has dimension $n-\dim L$.

        The following are equivalent:
        \begin{enumerate}
            \item The subvariety $L \times W$ is rotund. \label{one}
            \item There is a non-empty Zariski open subset $W^\circ$ of $W$ such that for all $w \in \mathbb{G}_m^n(\mathbb{C})$, the set $\Gamma_\exp \cap (L(\mathbb{C}) \times (w^{-1} \cdot W^{\circ}(\mathbb{C})))$ is (empty or) discrete, where $\Gamma_\exp$ denotes the graph of the complex exponential function. \label{two}
            \item There is a point $w_0 \in W(\mathbb{C})$ such that the map $\delta:(L \times W) (\mathbb{C}) \rightarrow (\ctimes)^n$ defined by $(\ell,w) \mapsto \frac{w}{\exp(\ell)}$ is open in a (Euclidean) neighborhood of $(0,w_0)$. \label{three}
            \item There is a non-empty Zariski open subset $W^\circ$ of $W$ such that the map $\delta: (L \times W^\circ)(\mathbb{C}) \rightarrow (\ctimes)^n$ defined by $(\ell,w) \mapsto \frac{w}{\exp(\ell)}$ is open.  \label{four}
            \item $\am + L(\mathbb{R})=\mathbb{R}^n$. \label{five} 
            \item $\bigcup_{\tau \in \trop(W)} \tau(\mathbb{R})+L(\mathbb{R})=\mathbb{R}^n$. \label{six}
            \item There is $\tau \in \trop(W)$ such that $\dim (\tau(\mathbb{R}) + L(\mathbb{R}))=n$. \label{seven}
            \item For all $z=(z_1,\dots,z_n) \in \mathbb{G}_m^n(\mathbb{C})$, there exist $\epsilon>0$ and $z'=(z_1',\dots,z_n') \in \mathbb{G}_m^n(\mathbb{C})$ such that $|z_i'| = |z_i|$ for all $i$ and the open ball $B(z', \epsilon)$ with respect to the Euclidean metric on $\mathbb{G}^n_m(\mathbb{C})$ is contained in 
            $\frac{W(\mathbb{C})}{\exp(L(\mathbb{C}))}$. \label{eight}
        \end{enumerate}
    \end{prop}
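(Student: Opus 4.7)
The eight conditions split naturally into three families: the algebraic and tropical conditions (1), (6), (7); the analytic openness and discreteness conditions (2), (3), (4); and the geometric conditions (5), (8) involving the amoeba and ball-containment in the quotient $W(\mathbb{C})/\exp(L(\mathbb{C}))$. I plan to establish equivalences within each family and then connect them through the bridges (5) $\Leftrightarrow$ (6) and (5) $\Leftrightarrow$ (8), using Lemma \ref{lem:open-mapping-in-frakC} to pass to the analytic openness conditions.

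\textbf{Tropical cluster.} For (1) $\Leftrightarrow$ (6), use $\pi_{TJ}(L \times W) = \pi_{LJ}(L) \times \pi_J(W)$ together with the standard tropical dimension formula $\dim \pi_J(W) = \dim \pi_{LJ(\mathbb{R})}(\trop(W))$ to reformulate rotundity as the statement that, for every $\mathbb{Q}$-rational subspace $LJ(\mathbb{R})$, the projection of $\trop(W) + L(\mathbb{R})$ modulo $LJ(\mathbb{R})$ is surjective; letting $J$ range over all rational subgroups then makes this equivalent to $\trop(W) + L(\mathbb{R}) = \mathbb{R}^n$. For (6) $\Leftrightarrow$ (7): each $\tau \in \trop(W)$ is a cone in the constant-coefficients case (Remark \ref{rem:constant-coefficients}), so a finite union of the closed convex sets $\tau + L(\mathbb{R})$ can cover $\mathbb{R}^n$ only if at least one has full dimension, by the Baire category theorem; conversely, if some $\tau + L(\mathbb{R})$ has dimension $n$, then the balancing condition on the pure-dimensional tropical fan $\trop(W)$ forces the union to be all of $\mathbb{R}^n$. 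For (5) $\Leftrightarrow$ (6), use that rescaled amoebas converge to the tropicalization; in the non-standard framework of Section \ref{sec:preli} this becomes particularly clean, since the Archimedean valuation $\val$ on $\mathfrak{C}$ satisfies $\val(W(\mathfrak{C})) = \supp(\trop(W))$, and a standard-part argument transfers the covering condition between $\am$ and $\trop(W)$.

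\textbf{Analytic and geometric clusters.} The equivalence (5) $\Leftrightarrow$ (8) is a direct geometric translation: since $L$ is defined over $\mathbb{R}$ and $\Log \circ \exp = \Re$, taking $-\Log$ componentwise identifies the quotient $W(\mathbb{C})/\exp(L(\mathbb{C}))$ (up to the $\mathbb{S}_1^n$-action on arguments) with the set of points whose logarithm lies in $\am + L(\mathbb{R})$, and a Euclidean ball in $(\ctimes)^n$ corresponds to an open neighborhood in the log-scale. For (7) $\Rightarrow$ (4): pick $\tau$ as in (7) with $\lin(\tau) + L(\mathbb{R}) = \mathbb{R}^n$; at a smooth $\mathfrak{C}$-point of $L \times W_\tau$ the differential of $\delta$ becomes surjective, and Lemma \ref{lem:open-mapping-in-frakC} upgrades this to full openness of $\delta$ around a point whose residue in $\mathbb{C}$ lies in a Zariski dense open $W^\circ$ of $W$. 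The implication (4) $\Rightarrow$ (3) is immediate on choosing any $w_0 \in W^\circ$, while (3) $\Rightarrow$ (2) uses that an open holomorphic map between equidimensional complex analytic spaces has discrete fibers, and the intersection with the graph of $\exp$ is, up to reparametrization, such a fiber. Finally, (2) $\Rightarrow$ (1) is a dimension count: a failure of rotundity for some $J$ would produce positive-dimensional components in the fibers of $\pi_{TJ}|_{L \times W}$, and since the graph of $\exp$ is translation-invariant under $TJ(\mathbb{C})$, these would yield non-discrete intersections with the graph, contradicting (2).

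\textbf{Main obstacle.} The most delicate step will be (7) $\Rightarrow$ (6), where the balancing of the pure-dimensional tropical fan $\trop(W)$ must be invoked to promote full-dimensionality of a single slice to the covering of $\mathbb{R}^n$ by the entire union. The analytic step (7) $\Rightarrow$ (4) is also subtle, since the non-standard Open Mapping Theorem of Lemma \ref{lem:open-mapping-in-frakC} must be applied to a carefully chosen initial degeneration $W_\tau$ and the resulting openness descended from $\mathfrak{C}$ to $\mathbb{C}$ to identify a workable Zariski dense open $W^\circ$; this is where the model-theoretic setup of Section \ref{sec:preli} and the residue map on $\mathcal{O}$ do most of the work.
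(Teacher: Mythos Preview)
Your overall architecture is reasonable, but several of the bridges you propose have genuine gaps.

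\textbf{The (5) $\Rightarrow$ (8) ``direct translation'' does not work.} Knowing $\mathcal{A}_W + L(\mathbb{R}) = \mathbb{R}^n$ only tells you that for every $z$ there is \emph{some} $z'$ with $|z'_i|=|z_i|$ lying in $W(\mathbb{C})/\exp(L(\mathbb{C}))$. Condition (8) demands an entire Euclidean ball around $z'$ inside that image, and no amount of pushing the amoeba through $\Log$ produces this: you need openness of $\delta$ near a preimage of $z'$, which is precisely one of the analytic conditions you are trying to reach. The paper does not attempt (5) $\Rightarrow$ (8) directly; it proves (1) $\Rightarrow$ (5) and (1) $\Rightarrow$ (8) simultaneously by passing to an auxiliary variety $W_1\subseteq\mathbb{G}_m^{n+1}$ (the graph of a function vanishing on $W\setminus W^\circ$), for which the $\delta$-map is open on \emph{all} of $W_1$, and then invoking \cite[Lemma 6.15]{Gal23}.

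\textbf{The direct (1) $\Leftrightarrow$ (6) via rational quotients is flawed.} Rotundity only tests $\mathbb{Q}$-rational $J$, while $L$ is merely real. Your reformulation ``for every rational $Q$ the projection of $\trop(W)+L(\mathbb{R})$ modulo $Q$ is surjective'' does \emph{not} imply $\trop(W)+L(\mathbb{R})=\mathbb{R}^n$: already a single line of irrational slope in $\mathbb{R}^2$ surjects onto every rational quotient. Moreover, even the reformulation itself is off: rotundity gives $\dim\pi_Q(L)+\dim\pi_Q(\trop(W))\geq n-\dim Q$, which is weaker than $\dim(\pi_Q(L)+\pi_Q(\trop(W)))=n-\dim Q$. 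The paper avoids this entirely and reaches (6) only via (5), using a Kuratowski-limit argument and \cite[Theorem A]{Jon}.

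\textbf{The (2) $\Rightarrow$ (1) sketch misstates the invariance.} The graph $\Gamma_{\exp}$ is not invariant under $TJ(\mathbb{C})=LJ(\mathbb{C})\times J(\mathbb{C})$; it is invariant only under the diagonal $\{(a,\exp(a)):a\in LJ(\mathbb{C})\}$, which has dimension $\dim J$, not $2\dim J$. So a fibre of $\pi_{TJ}$ of excessive dimension does not automatically meet $\Gamma_{\exp}$ in a positive-dimensional set. The paper instead cites \cite[Proposition 3.8]{Gal23} for (3) $\Rightarrow$ (1) and \cite{K19},\cite{Gal23} for (1) $\Rightarrow$ (2).

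Your (7) $\Rightarrow$ (4) idea is closest in spirit to the paper: it proves a Claim (if $L\times W_\tau$ is rotund then $L\times W$ is rotund) via Lemma~\ref{lem:open-mapping-in-frakC}, but uses it for the contrapositive (7) $\Rightarrow$ (1), then reaches (4) through (1) $\Rightarrow$ (2) $\Rightarrow$ (4). Your (7) $\Rightarrow$ (6) via balancing is plausible and would be a genuinely different route, but you give no argument; this step is non-trivial and the paper circumvents it.
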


    \begin{proof}
        For $(\ref{one} \Rightarrow \ref{two})$, we use \autocite[proof of Proposition 6.2 and Remark 6.3]{K19} and \autocite[proof of Proposition 3.7]{Gal23}, applying these to some irreducible component $W_{\textnormal{Irr}}$ of $W$ for which $L \times W_{\textnormal{Irr}}$ is rotund. Strictly speaking, the latter reference is concerned with openness of the $\delta$-map, but the same argument yields discreteness of its fibers. For $(\ref{two} \Rightarrow \ref{four})$, assuming $W^\circ$ as in $(\ref{two})$ given, the same $W^\circ$ satisfies $(\ref{four})$ by the Open Mapping Theorem \autocite[p. 107]{grauert2012coherent}. $(\ref{four} \Rightarrow \ref{three})$ is obvious. $(\ref{three} \Rightarrow \ref{one})$ is \autocite[Proposition 3.8]{Gal23} (note that any Euclidean neighborhood of $(0,w_0)$ will contain a Euclidean neighborhood of some $(0,w_1)$, where $w_1$ does not lie in the intersection of two distinct irreducible components of $W$).
        
        We show $(\ref{one} \Rightarrow \ref{five}, \ref{eight})$.

        Assume then that $L \times W$ is rotund, and let $W^\circ$ be the Zariski open subset given by $(\ref{four})$. Let $W':=W \setminus W^\circ$. Let $F$ be some non-zero algebraic function on $W$ which vanishes on all points of $W'$.

        Consider the algebraic subvariety $W_1$ of $\mathbb{G}_m^{n+1}$ whose set of $\mathbb{C}$-points is $$\{(w_1,\dots,w_{n+1}) \in (\ctimes)^{n+1} \mid (w_1,\dots,w_n) \in W(\mathbb{C}) \wedge w_{n+1}=F(w_1,\dots,w_n) \}.$$ Then $(L \times \mathbb{G}_a) \times W_1$ satisfies Condition (\ref{four}) with $W_1^\circ=W_1$. Hence we may apply \cite[Lemma 6.15]{Gal23} to all translates $z \cdot W_1$ for $z \in (\ctimes)^{n+1}$, which gives that $\mathcal{A}_{W_1}+(L(\mathbb{R}) \times \mathbb{R})=\mathbb{R}^{n+1}$, from which it follows that $L \times W$ satisfies Condition (\ref{five}). It also follows that $(L \times \mathbb{G}_a) \times W_1$ satisfies (\ref{eight}). But then $L \times W$ also does.
        
        For $(\ref{eight} \Rightarrow \ref{three})$, note that, because of \cite[Proposition on p. 41 in Section 3.8 of Chapter 1]{Chirka_1989}, $(\ref{eight})$ implies that the map $\delta: (L \times W)(\mathbb{C}) \to (\mathbb{C}^\times)^n$ defined by $(\ell,w) \mapsto \frac{w}{\exp(\ell)}$ has a fiber with an isolated point and this in turn implies that $\delta$ is open in a neighborhood of some $(\ell_0,w_0) \in (L \times W)(\mathbb{C})$ by the Open Mapping Theorem and the upper semicontinuity of the fiber dimension of a holomorphic map \cite[Proposition 49.14]{Kaup_Kaup_1983}. For any other $\ell_1 \in L(\mathbb{C})$, there is a neighborhood $U$ of $(\ell_1,w_0)$ such that $\delta^{\ast}: (L \times W)(\mathbb{C}) \rightarrow (\ctimes)^n$ defined by $\delta^{\ast}(\ell,w)=\delta(\ell+\ell_0-\ell_1,w) = \frac{w}{\exp(\ell+\ell_0-\ell_1)}=\exp(\ell_1-\ell_0) \delta(\ell,w)$ is open when restricted to $U$. Hence $(\ref{three})$ holds. 
        
        $(\ref{six} \Rightarrow \ref{seven})$ is obvious. We now show $(\ref{five} \Rightarrow \ref{six})$. 

        Let $x \in \mathbb{R}^n$. We have to show that $x \in \supp(\trop(W))(\mathbb{R})+L(\mathbb{R})$. Since $0 \in \supp(\trop(W))$, if $x \in L(\mathbb{R})$ we are done, so we assume $x \notin L(\mathbb{R})$. Consider the notion of Kuratowski convergence induced on the set of subsets of $\mathbb{R}^n$ by the Euclidean metric on $\mathbb{R}^n$; see for example \cite[Section 29]{Kur}. With respect to this notion, for every infinite subset $\mathcal{N} \subseteq \mathbb{N}$, the sequence of subspaces $\{jx+L(\mathbb{R})\}_{j \in \mathcal{N}}$ of $\mathbb{R}^n$ has empty limit. By assumption, for every $j \in \mathbb{N}$ there is $a_j \in \am$ such that $jx+ L(\mathbb{R})=a_j + L(\mathbb{R})$. If the sequence $\{a_j\}_{j \in \mathbb{N}}$ has an accumulation point $a \in \mathbb{R}^n$, then $\lim_{j \in \mathcal{N}} jx+L(\mathbb{R})=a+L(\mathbb{R})$ for some infinite subset $\mathcal{N} \subseteq \mathbb{N}$, which yields a contradiction with the observation above. Therefore, we have that $\lim_{j \to \infty} |a_j| = \infty$.

      Denote by $S_{n-1}$ the unit ($n-1$)-sphere in $\mathbb{R}^n$. There exists an infinite subset $\mathcal{N} \subseteq \mathbb{N}$ such that $ja_j \neq 0$ for all $j \in \mathcal{N}$ and $ \left\{\frac{a_j}{|a_j|}\right\}_{j \in \mathcal{N}}$ has a limit $b \in S_{n-1}$. Then $\mathbb{R}_{\geq 0} \cdot b= \mathbb{R}_{\geq 0} \cdot \lim_{j \in \mathcal{N}} \frac{a_j}{|a_j|} $, so by \cite[Theorem A]{Jon}, the fact that $\lim_{j \to \infty}{|a_j|} = \infty$, and the fact that $\trop(W)$ is a fan, we must have $\mathbb{R}_{\geq 0} \cdot b \subseteq \supp(\trop(W))(\mathbb{R})$. Writing $x=\frac{1}{j} jx$, for every $j \in \mathcal{N}$, we see
        \[    x \in \frac{1}{j} \left(|a_j| \frac{a_j}{|a_j|} +L(\mathbb{R}) \right)=\frac{|a_j|}{j} \frac{a_j}{|a_j|}+L(\mathbb{R})\] and thus
        \[ x \in \mathbb{R}_{\geq 0} \cdot \frac{a_j}{|a_j|} + L(\mathbb{R}).\]

        Since the sequence $\left\{ \mathbb{R}_{\geq 0} \cdot \frac{a_j}{|a_j|} \right\}_{j \in \mathcal{N}}$ converges to the half-line $\mathbb{R}_{\geq 0} \cdot b$, we have that 
        \[ \lim_{j \in \mathcal{N}} \left(  \mathbb{R}_{\geq 0} \cdot \frac{a_j}{|a_j|} + L(\mathbb{R}) \right)= \mathbb{R}_{\geq 0} \cdot b + L(\mathbb{R}). \]

        Hence $x \in \mathbb{R}_{\geq 0} \cdot b + L(\mathbb{R}) \subseteq \supp(\trop(W))(\mathbb{R})+L(\mathbb{R})$.
       
        Finally, let us prove that $(\ref{seven} \Rightarrow \ref{one})$ (more precisely, we prove the contrapositive $(\neg \ref{one} \Rightarrow \neg \ref{seven})$).
        
        \textbf{Claim}: If $W$ has an initial variety $W_\tau$ such that $L \times W_\tau$ is rotund, then $L \times W$ is rotund. 

        \textbf{Proof of Claim}: Assume that $L \times W_\tau$ is rotund for some $\tau \in \trop(W)$, and let $\gamma \in \relint(\tau)$ and $a \coloneqq s(\gamma)$. By Lemma \ref{lem:componentsofinitialform}, every irreducible component of $W_\tau$ has dimension $\dim W = n - \dim L$. Let $W_\tau^\circ$ be the non-empty Zariski open subset of $W_\tau$ obtained by applying (\ref{two}) to $L \times W_\tau$. By Lemma \ref{lem:equal-residue} and Remark \ref{rem:equal-residue}, $W_\tau(\mathbb{C})=\res((a^{-1} \cdot W(\mathfrak{C})) \cap (\mathcal{O}^\times)^n)$.

        We now choose $w_0 \in W_\tau^\circ(\mathbb{C})$ and $w_1 \in (a^{-1} \cdot W(\mathfrak{C})) \cap (\mathcal{O}^\times)^n$ such that $\res(w_1) = w_0$.

         By Condition (\ref{two}), there exists $\epsilon_1 \in \mathbb{R}_{>0}$ such that $|w_0^{-1}|_\infty < \epsilon_1^{-1}$ and, if $(\ell,w) \in (L \times W_\tau)(\mathbb{C}) \cap \overline{B((0,w_0),\epsilon_1)}$ with $w/\exp(\ell) = w_0$, then $(\ell,w) = (0,w_0)$. Since $\res(|w_1^{-1}|_\infty) = |w_0^{-1}|_\infty < \epsilon_1^{-1}$, we have that 
        \[(L \times a^{-1} \cdot W)(\mathfrak{C}) \cap \overline{B((0,w_1),\epsilon_1)}(\mathfrak{C}) \subseteq \mathcal{O}^{n} \times (\mathcal{O}^\times)^{n}.\]
         
         There then exists $\epsilon_0 \in \mathbb{R}_{>0}$ such that
        \[
        \min_{(\ell,w) \in (L \times W_\tau)(\mathbb{C}) \cap \partial B((0,w_0),\epsilon_1)}{|w/\exp(\ell)-w_0|} > \epsilon_0.
        \]
        This implies that
        \[
        \min_{(\ell,w) \in (L \times a^{-1} \cdot W)(\mathfrak{C}) \cap \partial B((0,w_1),\epsilon_1)(\mathfrak{C})}{|w/\exp(\ell)-w_1|} > \epsilon_0,
        \] where the latter minimum exists as it is the minimum of a function which belongs to a family, definable over $\mathbb{C}$, of definable and continuous functions with closed and bounded domains. 
        
        Thus the map $\delta: (L \times a^{-1} \cdot W)(\mathfrak{C}) \rightarrow (\mathfrak{C}^\times)^n$ defined by $(\ell,w) \mapsto w/\exp(\ell)$ satisfies
        \[ \delta^{-1}(B(w_1,\epsilon_0)(\mathfrak{C})) \cap B((0,w_1),\epsilon_1)(\mathfrak{C}) = \delta^{-1}(B(w_1,\epsilon_0)(\mathfrak{C})) \cap \overline{B((0,w_1),\epsilon_1)}(\mathfrak{C}).\]
        Recalling that $\res(|w_1^{-1}|_{\infty}) < \epsilon_1^{-1}$, we conclude that the assumptions of Lemma \ref{lem:open-mapping-in-frakC} are satisfied for the point $(0,w_1)$ of $L \times a^{-1} \cdot W$ and the radii $\epsilon_0, \epsilon_1$.
        
        We then have that $\frac{a^{-1} \cdot W}{\exp(L)}(\mathfrak{C})$ has non-empty interior. Multiplying this by $a$, we get that $\frac{W}{\exp(L)}(\mathfrak{C})$ has non-empty interior; since $\frac{W}{\exp(L)}$ is defined over $\mathbb{C}$, and having non-empty interior is a first-order condition, $\frac{W}{\exp(L)}(\mathbb{C})$ also has non-empty interior. Hence, $L \times W$ is rotund by Condition $(\ref{three})$, again because of \cite[Proposition on p. 41 in Section 3.8 of Chapter 1]{Chirka_1989}, the upper semicontinuity of the fiber dimension of a holomorphic map, and the Open Mapping Theorem. This completes the proof of the above claim.
    
        We now show that, if $L \times W$ is not rotund, then $$\dim (\tau(\mathbb{R})  + L(\mathbb{R})) \neq n$$ for all $\tau \in \trop(W)$. Since $L \times W$ is not rotund and $\dim L + \dim W = n$, we must have $\dim L < n$ and so $\dim W > 0$. Let $\tau \in \trop(W)$. We want to show that $\dim(\tau(\mathbb{R})+L(\mathbb{R})) \neq n$. We may assume that $\dim \tau > 0$. By assumption and the above claim, $L \times W_\tau$ is not rotund. Let $J_\tau$ be as in Definition \ref{def:jtau}. Recall that $LJ_\tau$ denotes the Lie algebra of $J_\tau$. By Proposition \ref{prop:invariant}, $J_\tau$ stabilizes every irreducible component of $W_\tau$.
        
        Let $W_{\tau,\mathrm{Irr}}$ be an irreducible component of $W_\tau$. Since $L \times W_{\tau,\mathrm{Irr}}$ is not rotund, there exists an algebraic subgroup $J \leq \mathbb{G}^n_m$ such that $\dim \pi_{TJ}(L \times W_{\tau,\mathrm{Irr}}) < n - \dim J$. Set $\widetilde{J} = J \cdot J_\tau$. Then $\pi_J(\widetilde{J})$ stabilizes the Zariski closure of $\pi_J(W_{\tau,\mathrm{Irr}})$ and so
        \begin{align*} \dim \pi_{T\widetilde{J}}(L \times W_{\tau,\mathrm{Irr}}) & \leq \dim \pi_{TJ}(L \times W_{\tau,\mathrm{Irr}}) - \dim \pi_J(\widetilde{J}) \\
        &< n - \dim J - \dim \pi_J(\widetilde{J}) \\ & = n - \dim \widetilde{J}.
        \end{align*}
        But $\pi_{T\widetilde{J}}(L \times W_{\tau,\mathrm{Irr}}) = \pi_{T(\widetilde{J}/J_\tau)}(\pi_{TJ_\tau}(L \times W_{\tau,\mathrm{Irr}}))$ and so $\pi_{TJ_\tau}(L \times W_{\tau,\mathrm{Irr}})$ is not rotund. 

        In particular, $\pi_{LJ_\tau}(L) \neq \mathbb{G}_a^{n-\dim J_\tau}$ and so $\dim(\tau(\mathbb{R}) + L(\mathbb{R})) \neq n$. This completes the proof.
    \end{proof}
  
    \subsection{Geometrical non-degeneracy and uniformity}\label{sec:geonondeg}

Recall that an irreducible algebraic subvariety $W \subseteq \mathbb{G}_m^n$ is geometrically non-degenerate if for every algebraic subgroup $J \leq \mathbb{G}_m^n$, $$\dim \pi_J(W) = \min \{\dim W, n- \dim J\}.$$  It is straightforward to verify that if $W \subseteq \mathbb{G}_m^n$ is geometrically non-degenerate and of codimension $d$, then $L \times W$ is rotund for every $d$-dimensional subspace $L$ of $\mathbb{G}_a^n$ defined over $\mathbb{R}$.  

Our goal in this section is to show that geometrical non-degeneracy implies a uniform version of Condition (\ref{eight}) in Proposition \ref{prop:characterize-rotund}: there is an $\epsilon >0$ such that for all $z \in (\ctimes)^n$ and every subspace $L$ of $\mathbb{G}_a^n$ defined over $\mathbb{R}$ of dimension equal to $\codim W$, there is $s_1 \in \mathbb{S}_1^n(\mathbb{C})$ such that $$B(s_1, \epsilon) \subseteq \frac{z^{-1} \cdot W(\mathbb{C})}{\exp(L(\mathbb{C}))}.$$

        \begin{lem}\label{lem:rotundity-face}
    Let $L \leq \mathbb{G}_{a}^n$ be a linear subspace of dimension $d$ defined over $\mathbb{R}$ and let $W \subseteq \mathbb{G}_m^n$ be an algebraic subvariety whose irreducible components all have codimension $d$. Let $\tau \in \trop(W)$. Then, $L \times W_\tau$ is rotund if and only if there exists some $\sigma \in \trop(W)$ such that $\tau \subseteq \sigma$ and $\dim(\sigma(\mathbb{R}) + L(\mathbb{R})) = n$. In particular, if $L \times W_\tau$ is rotund, then $L \times W_{\tau'}$ is rotund for every $\tau' \in \trop(W)$ such that $\tau' \subseteq \tau$.  
    \end{lem}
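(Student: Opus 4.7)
The plan is to reduce the statement to Proposition \ref{prop:characterize-rotund} applied to $W_\tau$ in place of $W$, using Proposition \ref{prop:star-is-tropical} to realize $\trop(W_\tau)$ as $\star_{\trop(W)}(\tau)$ and then translating the condition (\ref{seven}) of Proposition \ref{prop:characterize-rotund} back into a condition about polyhedra in $\trop(W)$ containing $\tau$.

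First, I would check that the hypotheses of Proposition \ref{prop:characterize-rotund} are satisfied by $L \times W_\tau$. By Lemma \ref{lem:componentsofinitialform} applied to each irreducible component of $W$, the variety $W_\tau$ is equidimensional of codimension $d$; and $L$ has dimension $d$ and is defined over $\mathbb{R}$ by hypothesis. Thus $L \times W_\tau$ is rotund if and only if condition (\ref{seven}) of Proposition \ref{prop:characterize-rotund} holds for $W_\tau$, that is, if and only if there exists $\sigma' \in \trop(W_\tau)$ with $\dim(\sigma'(\mathbb{R}) + L(\mathbb{R})) = n$. By Proposition \ref{prop:star-is-tropical}, I can and will take $\trop(W_\tau) = \star_{\trop(W)}(\tau)$.

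Next, I would use the bijective correspondence (recalled after the definition of $\star_{\trop(W)}(\tau)$) between the cones of $\star_{\trop(W)}(\tau)$ and the polyhedra $\sigma \in \trop(W)$ that contain $\tau$. Pick $\gamma \in \relint(\tau)$. If $\sigma \in \trop(W)$ contains $\tau$ and $\sigma' \in \star_{\trop(W)}(\tau)$ is the corresponding cone, then $\sigma'$ locally coincides with $\sigma - \gamma$ around $0$, so $\lin(\sigma') = \lin(\sigma - \gamma) = \lin(\sigma)$. Because we are working in the constant coefficients case (Remark \ref{rem:constant-coefficients}), every $\sigma \in \trop(W)$ is a cone containing $0$, so $\aff(\sigma) = \lin(\sigma)$, and therefore
\[ \dim(\sigma(\mathbb{R}) + L(\mathbb{R})) = \dim(\lin(\sigma)(\mathbb{R}) + L(\mathbb{R})) = \dim(\sigma'(\mathbb{R}) + L(\mathbb{R})). \]
Combining this with the equivalence from the previous paragraph yields the main biconditional: $L \times W_\tau$ is rotund if and only if there is some $\sigma \in \trop(W)$ with $\tau \subseteq \sigma$ and $\dim(\sigma(\mathbb{R}) + L(\mathbb{R})) = n$.

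For the final ``in particular'' clause, I would simply iterate the equivalence. Suppose $L \times W_\tau$ is rotund and pick $\sigma \in \trop(W)$ with $\tau \subseteq \sigma$ and $\dim(\sigma(\mathbb{R}) + L(\mathbb{R})) = n$. For any $\tau' \in \trop(W)$ with $\tau' \subseteq \tau$, also $\tau' \subseteq \sigma$, so applying the biconditional to $\tau'$ gives that $L \times W_{\tau'}$ is rotund. The main obstacle I anticipate is a bookkeeping one, namely verifying cleanly that the bijection between $\star_{\trop(W)}(\tau)$ and the set of polyhedra of $\trop(W)$ containing $\tau$ preserves linear spans and hence the dimension quantity $\dim(\cdot + L(\mathbb{R}))$; this is precisely where the constant coefficients setup of Remark \ref{rem:constant-coefficients}, which makes every $\sigma$ a cone with $\aff(\sigma) = \lin(\sigma)$, does the essential work.
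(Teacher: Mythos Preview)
Your proposal is correct and follows essentially the same route as the paper: both apply Proposition \ref{prop:characterize-rotund}(\ref{seven}) to $W_\tau$, take $\trop(W_\tau)=\star_{\trop(W)}(\tau)$ via Proposition \ref{prop:star-is-tropical}, and use that the bijection between cones $\sigma'\in\star_{\trop(W)}(\tau)$ and polyhedra $\sigma\in\trop(W)$ containing $\tau$ satisfies $\lin(\sigma')=\lin(\sigma)$, which transfers the condition $\dim(\sigma'(\mathbb{R})+L(\mathbb{R}))=n$ to $\dim(\sigma(\mathbb{R})+L(\mathbb{R}))=n$. The only cosmetic difference is that you invoke the constant coefficients setup (Remark \ref{rem:constant-coefficients}) to get $\aff(\sigma)=\lin(\sigma)$, whereas the paper relies implicitly on the fact that $\dim(\sigma(\mathbb{R})+L(\mathbb{R}))$ depends only on $\lin(\sigma)$ (since $\aff(\sigma)$ is a translate of $\lin(\sigma)$); either justification works.
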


    \begin{proof}
    Note that every irreducible component of $W_\tau$ has codimension $d$ by Lemma \ref{lem:componentsofinitialform}. If $L \times W_\tau$ is rotund, then, by Propositions \ref{prop:star-is-tropical} and  \ref{prop:characterize-rotund}(\ref{seven}), there exists $\widetilde{\sigma} \in \star_{\trop(W)}(\tau)$ such that $\dim(\widetilde{\sigma}(\mathbb{R}) + L(\mathbb{R})) = n$. By the definition of the star, there exists $\sigma \in \trop(W)$ such that $\tau \subseteq \sigma$ and $\lin(\widetilde{\sigma}) = \lin(\sigma)$ and therefore $\dim(\sigma(\mathbb{R}) + L(\mathbb{R})) = n$.

    Conversely, if such a $\sigma$ exists, then the corresponding $\widetilde{\sigma} \in \star_{\trop(W)}(\tau)$ satisfies $\dim(\widetilde{\sigma}(\mathbb{R}) + L(\mathbb{R})) = n$ and then the same Propositions \ref{prop:star-is-tropical} and  \ref{prop:characterize-rotund}(\ref{seven}) imply that  $L \times W_\tau$ is rotund.

    The ``in particular'' statement then follows.
    \end{proof}

    \begin{lem}\label{lem:neighborhood-in-grass}
        Let $L_0 \leq \mathbb{G}_{a}^n$ be a linear subspace of dimension $d$ defined over $\mathbb{R}$ and $W \subseteq \mathbb{G}_{m}^n$ an algebraic subvariety whose irreducible components all have codimension $d$ such that $L_0 \times W$ is rotund.

        Set $S:=\{\tau \in \trop(W) \mid L_0 \times W_\tau \text{ is rotund}\}$ and $T:=\bigcup_{\tau \in S} \tau$. There is a definable open neighborhood $U$ of $L_0$ in the Grassmannian $G(d,n)$ such that for all $L \in U(\mathbb{R})$, $T(\mathbb{R})+L(\mathbb{R})=\mathbb{R}^n$.
    \end{lem}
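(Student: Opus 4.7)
I would first describe $T$ more explicitly, then show $T(\mathbb{R}) + L_0(\mathbb{R}) = \mathbb{R}^n$ by a topological argument, and finally extend to a neighborhood of $L_0$ via compactness combined with a local fan analysis.

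Set $S_0 := \{\sigma \in \trop(W) \mid \dim(\sigma(\mathbb{R}) + L_0(\mathbb{R})) = n\}$. By Lemma \ref{lem:rotundity-face}, $\tau \in S$ precisely when $\tau$ is a face of some $\sigma \in S_0$, and so $T = \bigcup_{\sigma \in S_0} \sigma$. Moreover, for each $\sigma \in S_0$ the inequalities $\dim \sigma \leq \dim W = n - d$ and $\dim(\sigma + L_0) = n$ force $\dim \sigma = n - d$ and the direct sum $\lin(\sigma)(\mathbb{R}) \oplus L_0(\mathbb{R}) = \mathbb{R}^n$.

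To show $T(\mathbb{R}) + L_0(\mathbb{R}) = \mathbb{R}^n$, I consider the open complement $A := \mathbb{R}^n \setminus (T(\mathbb{R}) + L_0(\mathbb{R}))$. Rotundity of $L_0 \times W$ via Proposition \ref{prop:characterize-rotund}(\ref{six}) yields $\mathbb{R}^n = \bigcup_{\tau \in \trop(W)}(\tau(\mathbb{R}) + L_0(\mathbb{R}))$. Contributions from $\tau \in S$ lie in $T + L_0$ and so are disjoint from $A$; for $\tau \notin S$, Lemma \ref{lem:rotundity-face} applied with $\sigma = \tau$ gives $\dim(\tau(\mathbb{R}) + L_0(\mathbb{R})) < n$, so $\tau + L_0$ has empty interior. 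Hence $A$ is an open set contained in a finite union of sets of empty interior, forcing $A = \emptyset$.

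To extend to a neighborhood, observe that $T(\mathbb{R}) + L(\mathbb{R})$ is invariant under scaling by positive reals, so $T + L = \mathbb{R}^n$ is equivalent to the inclusion of the unit sphere of $\mathbb{R}^n$ inside $T(\mathbb{R}) + L(\mathbb{R})$. For each $\sigma \in S_0$, the transversality $\lin(\sigma) \cap L = \{0\}$ persists on a definable open neighborhood of $L_0$ in $G(d, n)$, giving a continuous decomposition map $(y, L) \mapsto (s_\sigma(y, L), \ell_\sigma(y, L)) \in \lin(\sigma)(\mathbb{R}) \times L(\mathbb{R})$ with $y = s_\sigma + \ell_\sigma$. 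When a point $x$ on the unit sphere admits a representation $x = s_0 + \ell_0$ with $s_0 \in \relint(\sigma)(\mathbb{R})$ and $\ell_0 \in L_0(\mathbb{R})$, openness of $\relint(\sigma)$ in $\lin(\sigma)$ ensures $s_\sigma(y, L) \in \sigma$ for $(y, L)$ near $(x, L_0)$, whence $y \in \sigma + L \subseteq T + L$.

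The main obstacle is the boundary case in which every valid choice of $\sigma \in S_0$ forces $s_0 \in \partial \sigma$, so that $s_0 \in \relint(\tau)$ for some proper face $\tau \in S \setminus S_0$ of $\sigma$. I would resolve this by invoking Proposition \ref{prop:star-is-tropical} to identify $\trop(W_\tau)$ with $\star_{\trop(W)}(\tau)$, together with the rotundity of $L_0 \times W_\tau$ (a consequence of $\tau \in S$), in order to run the preceding dimension argument inside the star. The tangent cones at $s_0$ of those $\sigma \in S_0$ containing $\tau$ correspond under this identification to the maximal cones contributing to the analogous set $T$ for $W_\tau$, and their sum with $L_0$ covers $\mathbb{R}^n$; a continuity-and-compactness argument on the unit sphere then yields the desired uniform definable open neighborhood $U$ of $L_0$ on which $T + L = \mathbb{R}^n$.
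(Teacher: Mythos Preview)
Your first two steps---the description of $T$ via Lemma \ref{lem:rotundity-face} and the argument that $T(\mathbb{R})+L_0(\mathbb{R})=\mathbb{R}^n$---are correct and match the paper's proof. The gap is in the third step, where the phrase ``a continuity-and-compactness argument on the unit sphere'' hides a circularity.

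Concretely: in the boundary case you pass to $W_\tau$ and observe that the analogous set $T_\tau$ satisfies $T_\tau+L_0=\mathbb{R}^n$. But to conclude that a neighborhood of $x$ lies in $T+L$ for $L$ near $L_0$, you need $T_\tau+L=\mathbb{R}^n$ for those $L$, which is exactly the lemma for $W_\tau$. Since $\dim W_\tau=\dim W$, this is not a smaller instance, so no induction is available as you have set things up. Worse, the reduction can fail to simplify at all: if $x\in L_0(\mathbb{R})$ lies on the sphere, then the only decomposition $x=s_0+\ell_0$ with $s_0\in T$ may be $s_0=0$, giving $\tau=\{0\}$, $W_\tau=W$, and $\star(\tau)=\trop(W)$---you are back where you started.

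The paper avoids both problems. It removes $L(\mathbb{R})$ and works in $\mathbb{R}^n\setminus L(\mathbb{R})$, which is connected when $d<n-1$; this guarantees $\dim\tau_0>0$ for the chosen face. It then quotients by $Q=\lin(\tau_0)(\mathbb{C})$: since $J_{\tau_0}$ stabilizes each component of $W_{\tau_0}$, the pushforward $\pi_{\exp(Q)}(W_{\tau_0})$ has dimension $\dim W-\dim\tau_0<\dim W$, and the induction on $\dim W$ goes through (with the cases $\dim W\in\{0,1\}$ handled directly). This quotient step is the missing idea in your proposal.
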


    \begin{proof}
        By Lemma \ref{lem:rotundity-face}, we have that $\dim(\tau(\mathbb{R}) + L_0(\mathbb{R})) = n$ for all $\tau \in S$ that are maximal with respect to inclusion. We can then find a definable open neighborhood $U \subseteq G(d,n)$ of $L_0$ in the Grassmannian such that for all $L \in U(\mathbb{R})$ and all such $\tau$, we have that $\dim(\tau(\mathbb{R}) + L(\mathbb{R})) = n$. By Lemma \ref{lem:rotundity-face}, $S$ is closed under taking non-empty faces and therefore also a polyhedral complex.

        It follows from Proposition \ref{prop:characterize-rotund}(\ref{six})
         and Lemma \ref{lem:rotundity-face} that $\mathbb{R}^n \backslash (T(\mathbb{R}) + L_0(\mathbb{R}))$ is contained in a finite union of linear subspaces of dimension $< n$. Its complement $T(\mathbb{R})+L_0(\mathbb{R})$ is closed because it is a finite union of closed polyhedra. We deduce that
        \[ T(\mathbb{R}) + L_0(\mathbb{R}) = \mathbb{R}^n.\]

        We now prove the lemma by induction on $\dim W$, the case $\dim W = 0$ being trivial.

        If $\dim W=1$, then $d=n-1$, so $L_0(\mathbb{R})$ is a hyperplane in $\mathbb{R}^n$, say defined by an equation $\sum_{i=1}^n a_ix_i=0$ with $a_1,\dots,a_n \in \mathbb{R}$. It follows from $T(\mathbb{R})+L_0(\mathbb{R}) = \mathbb{R}^n$ that there exist $(y_1,\dots,y_n)$ and $(z_1,\dots,z_n)$ in $T(\mathbb{R})$ such that 
        \[\sum_{i=1}^n a_iy_i < 0 < \sum_{i=1}^n a_i z_i.\] These inequalities stay true if we replace $(a_1,\dots,a_n)$ with a vector $(b_1,\dots,b_n)$ that is sufficiently close to $(a_1,\dots,a_n)$, and since every $\tau \in S$ is a cone, that is enough to deduce that $T(\mathbb{R})+L(\mathbb{R})=\mathbb{R}^n$ for all $L \in U(\mathbb{R})$ after maybe replacing $U$ by some smaller definable open neighborhood of $L_0$.

        Assume now $\dim W > 1$, and fix $L \in U(\mathbb{R})$. Then $d < n-1$, from which it follows that $\mathbb{R}^n \setminus L(\mathbb{R})$ is connected. We have that $(T(\mathbb{R})+L(\mathbb{R}))\setminus L(\mathbb{R})$ is relatively closed in $\mathbb{R}^n \setminus L(\mathbb{R})$ because $T(\mathbb{R})+L(\mathbb{R})$ is a finite union of closed polyhedra. Since $0 \in T(\mathbb{R})$, it remains to show that $(T(\mathbb{R})+L(\mathbb{R}))\setminus L(\mathbb{R})$ is open, as we can then conclude that $T(\mathbb{R}) + L(\mathbb{R}) = \mathbb{R}^n$ by connectedness of $\mathbb{R}^n \setminus L(\mathbb{R})$.

        Let $x \in T(\mathbb{R})+L(\mathbb{R})$, $x \notin L(\mathbb{R})$. We want to show that $x$ lies in the interior of $(T(\mathbb{R})+L(\mathbb{R}))\setminus L(\mathbb{R})$. There is a polyhedron $\tau_0 \in S$ such that $x \in y+L(\mathbb{R})$ for some $y \in \relint(\tau_0)(\mathbb{R})$. Since $\trop(W)$ is a fan and $x \notin L(\mathbb{R})$, the dimension of $\tau_0$ is positive.
        
        By the definition of $S$, $L_0 \times W_{\tau_0}$ is rotund. By Proposition \ref{prop:star-is-tropical}, we may take $\trop(W_{\tau_0}) = \star_{\trop(W)}(\tau_0)$. Set 
        \[S_0:=\{ \tau \in \trop(W_{\tau_0}) \mid L_0 \times (W_{\tau_0})_\tau \textnormal{ is rotund} \} \] and set $T_0:=\bigcup_{\tau \in S_0} \tau$.

        We claim that $S_0 = \star_S(\tau_0)$. This follows from Lemma \ref{lem:rotundity-face} since $\lin(\tau^\ast) = \lin(\tau)$ and $\tau \subseteq \sigma \Leftrightarrow \tau^\ast \subseteq \sigma^\ast$ for all $\tau, \sigma \in \trop(W)$ such that $\tau_0 \subseteq \tau \cap \sigma$, where $\tau^\ast, \sigma^\ast$ denote the polyhedra in $\trop(W_{\tau_0}) = \star_{\trop(W)}(\tau_0)$ that correspond to $\tau, \sigma$ respectively. Then by the definition of the star, for all $y' \in \relint(\tau_0)$ we have that $S_0$ and $S-y'$ locally coincide around $0$. Hence, $S_0 + y'$ and $S$ locally coincide around $y'$, but $S_0 + y' = S_0$ since every element of $S_0$ is invariant under $\lin(\tau_0)$ by Remark \ref{rem:invariant-star} and $\tau_0$ is a cone, so it is contained in $\lin(\tau_0)$. Therefore, if we consider $\relint(\tau_0)$, $T_0$, and $T$ as definable sets in the language of ordered $\mathbb{Q}$-vector spaces (without parameters), we see that the formula
        \[ \forall y' \in \relint(\tau_0)(\exists a > 0(\forall x' \in y' + (-a,a)^n(x' \in T \leftrightarrow x' \in T_0)))\]
        holds in $\Gamma$ and therefore it holds in $\mathbb{R}$ by completeness of the theory of non-trivial ordered $\mathbb{Q}$-vector spaces (see \cite[Remark 7.9 in Chapter 1]{Dri98}). Hence there is a neighborhood of $y$ in $\mathbb{R}^n$ in which the sets $T(\mathbb{R})$ and $T_0(\mathbb{R})$ coincide.

        Set $Q\coloneqq \lin(\tau_0)(\mathbb{C})$ and let $\pi_Q$ and $\pi_{\exp(Q)}$ denote the usual projections associated to $Q$. Observe that, in the notation of Proposition \ref{prop:invariant}, $\exp(Q)=J_{\tau_0}(\mathbb{C})$, and therefore, by that proposition, $\exp(Q)$ stabilizes every irreducible component $W_{\tau_0,\mathrm{Irr}}$ of $W_{\tau_0}$. In particular, each $\pi_{\exp(Q)}(W_{\tau_0,\mathrm{Irr}})$ is Zariski closed and of dimension $\dim W_{\tau_0,\mathrm{Irr}} - \dim Q = \dim W - \dim Q$ by Lemma \ref{lem:componentsofinitialform}. Then $\pi_Q(L_0) \times \pi_{\exp(Q)}(W_{\tau_0})$ is rotund by rotundity of $L_0 \times W_{\tau_0}$, so in particular
        \begin{equation}\label{eq:dim}
            \dim \pi_Q(L_0) + \dim \pi_{\exp(Q)}(W_{\tau_0}) = \dim L_0 + \dim W - \dim Q = n - \dim Q.
        \end{equation}
        Since $y \in \relint(\tau_0)(\mathbb{R})$, we have that $\pi_Q(y)=0$.
        
        We now want to apply the induction hypothesis to $\pi_Q(L_0) \times \pi_{\exp(Q)}(W_{\tau_0})$. Let $S_1$ denote the set
         \[ \{ \tau \in \trop(\pi_{\exp(Q)}(W_{\tau_0})) \mid \pi_Q(L_0) \times (\pi_{\exp(Q)}(W_{\tau_0}))_\tau \text{ is rotund}\}.\]
         It follows from Lemma \ref{lem:rotundity-face} that a point $r \in \mathbb{R}^n$ belongs to $T_0(\mathbb{R})$ if and only if $r \in \sigma(\mathbb{R})$ for some $\sigma \in \trop(W_{\tau_0})$ such that $\dim(\sigma(\mathbb{R})+L_0(\mathbb{R})) = n$. Because of Proposition \ref{prop:star-is-tropical}, Remark \ref{rem:invariant-star}, and \cite[Corollary 3.2.13]{MS}, this is the case if and only if $\pi_Q(r) \in \sigma'(\mathbb{R})$ for some $\sigma' \in \trop(\pi_{\exp(Q)}(W_{\tau_0}))$ such that $\dim(\sigma'(\mathbb{R})+\pi_Q(L_0)(\mathbb{R})) = n - \dim Q$ (the subdivision of $\supp(\trop(\pi_{\exp(Q)}(W_{\tau_0})))(\mathbb{R}) = \pi_Q(\supp(\trop(W_{\tau_0}))(\mathbb{R}))$ into polyhedra does not matter here). Again by Lemma \ref{lem:rotundity-face}, this is equivalent to $\pi_Q(r)$ lying in $\bigcup_{\tau \in S_1}{\tau(\mathbb{R})}$. Using the surjectivity of $\pi_Q|_{\mathbb{R}^n}: \mathbb{R}^n \to \mathbb{R}^n/(Q \cap \mathbb{R}^n)$, we conclude that
         \[ \bigcup_{\tau \in S_1}{\tau(\mathbb{R})} = \pi_Q(T_0(\mathbb{R})).\]
  
        Note that $\dim \pi_Q(L_0) = \dim L_0$ by \eqref{eq:dim} and that the preimage of a definable open neighborhood of $\pi_Q(L_0)$ under the map between Grassmannians induced by $\pi_Q|_{\mathbb{R}^n}$ contains a definable open neighborhood of $L_0$. By the induction hypothesis, after maybe replacing $U$ by some smaller definable open neighborhood of $L_0$ (this can be done independently of $\tau_0$ as $\tau_0$ varies in the finite set $\trop(W)$) and recalling that $L \in U(\mathbb{R})$, we then know that
        \[\bigcup_{\tau \in S_1}{\tau(\mathbb{R})} + \pi_Q(L)(\mathbb{R}) = \mathbb{R}^n/(Q \cap \mathbb{R}^n),\] which is equal to $\pi_Q(T_0(\mathbb{R}))+\pi_Q(L)(\mathbb{R})$ by the above. From this, it follows that
        \[\mathbb{R}^n = (\pi_Q|_{\mathbb{R}^n})^{-1}(\pi_Q(T_0(\mathbb{R}))+\pi_Q(L)(\mathbb{R})) = T_0(\mathbb{R})+L(\mathbb{R}),\]
        where the second equality holds due to Proposition \ref{prop:star-is-tropical} and Remark \ref{rem:invariant-star}, and so $y$ lies in the interior of $T_0(\mathbb{R})+L(\mathbb{R})$ (inside $\mathbb{R}^n$).
        
        For each $\tau \in S_0$ that is maximal with respect to inclusion, it follows from the fact that $S_0 = \star_S(\tau_0)$ and our assumption on $U$ from the beginning of the proof that $\lin(\tau)(\mathbb{R}) + L(\mathbb{R}) = \mathbb{R}^n$. Theorem \ref{thm:structure-theorem} and Lemma \ref{lem:componentsofinitialform} then imply that
        \[\dim (\lin(\tau)(\mathbb{R})) = \dim W_{\tau_0} = \dim W = n-d =\codim(L(\mathbb{R}))\]
       and therefore the addition map $\varphi_\tau: \lin(\tau)(\mathbb{R}) \times L(\mathbb{R}) \to \mathbb{R}^n$ is a homeomorphism. Since $T_0(\mathbb{R})$ and $T(\mathbb{R})$ coincide in a neighborhood of $y$, we deduce that $y$ also lies in the interior of $T(\mathbb{R}) + L(\mathbb{R})$, and therefore so does $x$. Hence $(T(\mathbb{R})+L(\mathbb{R})) \setminus L(\mathbb{R})$ is open as required. 
    \end{proof}

    \begin{prop}\label{prop:non-archimedean-version}
        Let $L \leq \mathbb{G}_{a,\mathfrak{C}}^n$ be a linear subspace of dimension $d$ defined over $\mathfrak{R}$ and $W \subseteq \mathbb{G}_{m}^n$ an algebraic subvariety such that $W$ is geometrically non-degenerate and $\dim L + \dim W = n$.

        Then there is a finite set of polyhedra $S \subseteq \trop(W)$ such that the following hold:
        \begin{itemize}
            \item[$(1)$] For all $z \in (\mathfrak{C}^\times)^n$ there is $z' \in (z \cdot \exp(L(\mathfrak{C})) \cdot (\mathcal{O}^\times)^n) \cap W(\mathfrak{C})$ such that $\val(z') \in \tau$ for some $\tau \in S$.
            \item[$(2)$] $\res(L) \times W_\tau$ is rotund for all $\tau \in S$.
        \end{itemize}
    \end{prop}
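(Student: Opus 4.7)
The plan is to set
\[S := \{\tau \in \trop(W) \mid \res(L) \times W_\tau \text{ is rotund}\},\]
which is a finite subset of $\trop(W)$ that automatically satisfies (2); the content of the proposition then lies in verifying (1) for this choice of $S$.

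Since $W$ is geometrically non-degenerate and $\res(L)$ is, by the $\mathfrak{R}$-version of Lemma \ref{lem:reduction-linear-subspace}, a linear subspace of $\mathbb{G}_a^n$ defined over $\mathbb{R}$ of dimension $d$, the product $\res(L) \times W$ is rotund (as noted at the start of Subsection \ref{sec:geonondeg}). Applying Lemma \ref{lem:neighborhood-in-grass} with $L_0 = \res(L)$ produces a definable open neighborhood $U \subseteq G(d,n)$ of $\res(L)$ such that $T(\mathbb{R}) + L'(\mathbb{R}) = \mathbb{R}^n$ for every $L' \in U(\mathbb{R})$, where $T := \bigcup_{\tau \in S} \tau$. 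This is a first-order statement in $\mathbb{R}_{\exp,\sin}$, so it transfers to $\mathfrak{R}$ by elementarity. Next, picking via Lemma \ref{lem:reduction-linear-subspace} a defining matrix for $L$ whose entries lie in $\mathcal{O} \cap \mathfrak{R}$, this matrix and its residue (which defines $\res(L)$) differ entrywise by elements of $\mathfrak{m}$, so the induced points of $G(d,n)$ are infinitesimally close; the openness of $U$ around $\res(L)$ then forces $L \in U(\mathfrak{R})$, and hence
\[T(\mathfrak{R}) + L(\mathfrak{R}) = \mathfrak{R}^n.\]

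The main step is to transfer this identity from $\mathfrak{R}^n$ into $\Gamma^n$ via the coordinate-wise map $\val \circ \exp \colon \mathfrak{R}^n \to \Gamma^n$, which is a surjective, $\mathbb{Q}$-linear, order-reversing group homomorphism. By Remark \ref{rem:constant-coefficients}, the polyhedra of $\trop(W)$ may be chosen to be defined by $\mathbb{Q}$-linear inequalities with zero constants, and the order-reversing property of $\val \circ \exp$ then gives $-\val(\exp(T(\mathfrak{R}))) \subseteq T(\Gamma)$. Given $z \in (\mathfrak{C}^\times)^n$, I would apply the identity above to $y := -\log|z| \in \mathfrak{R}^n$, write $y = t' + \ell'$ with $t' \in T(\mathfrak{R})$ and $\ell' \in L(\mathfrak{R})$, and exponentiate coordinate-wise to get $|z|^{-1} = \exp(t') \cdot \exp(\ell')$. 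Taking valuations yields
\[\val(z \cdot \exp(\ell')) = -\val(\exp(t')) \in T(\Gamma),\]
so $\val(z \cdot \exp(\ell')) \in \tau(\Gamma)$ for some $\tau \in S$. By Theorem \ref{thm:trop-is-val-of-points} one can then choose $z' \in W(\mathfrak{C})$ with $\val(z') = \val(z \cdot \exp(\ell'))$; the ratio $z'/(z \cdot \exp(\ell'))$ has valuation $0$, hence lies in $(\mathcal{O}^\times)^n$, and so $z' \in z \cdot \exp(L(\mathfrak{C})) \cdot (\mathcal{O}^\times)^n$, as required.

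The main obstacle is the careful navigation between the three ordered structures $\mathbb{R}$, $\mathfrak{R}$ and $\Gamma$: one needs the real-geometric input of Lemma \ref{lem:neighborhood-in-grass}, an elementarity plus infinitesimal-proximity argument to lift the covering identity from $L_0$ to the non-standard subspace $L$, and a final passage via the order-reversing $\mathbb{Q}$-linear map $\val \circ \exp$ to the value group, exploiting that $L(\mathfrak{R})$ is closed under negation to align sign conventions.
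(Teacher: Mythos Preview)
Your proof is correct and follows essentially the same route as the paper: define $S$ as the set of $\tau \in \trop(W)$ with $\res(L) \times W_\tau$ rotund, invoke Lemma~\ref{lem:neighborhood-in-grass} at $L_0 = \res(L)$, transfer the covering identity $T(\mathbb{R}) + L'(\mathbb{R}) = \mathbb{R}^n$ to $\mathfrak{R}$ by elementarity, use Lemma~\ref{lem:reduction-linear-subspace} to see $L \in U(\mathfrak{R})$, and then pass to $\Gamma$ via the order-reversing homomorphism $-\val \circ \exp$ to locate a point of $W(\mathfrak{C})$ with the right valuation. The only cosmetic difference is that the paper picks an auxiliary $a \in (\mathfrak{R}_{>0})^n$ with $\val(a) = \val(z)$ whereas you use $|z|$ directly, and you spell out the infinitesimal-proximity argument for $L \in U(\mathfrak{R})$ a bit more explicitly than the paper does.
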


    \begin{proof}
        Since $W$ is geometrically non-degenerate, $\res(L) \times W$ is rotund. Set $S=\{\tau \in \trop(W) \mid \res(L) \times W_{\tau} \textnormal{ is rotund}\}$ and $T=\bigcup_{\tau \in S} \tau$. By Lemma \ref{lem:neighborhood-in-grass}, there exists a definable open neighborhood $U \subseteq G(d,n)$ of $\res(L)$ such that for all $L' \in U(\mathbb{R})$, we have that $T(\mathbb{R}) + L'(\mathbb{R}) = \mathbb{R}^n$.

        Therefore the first-order sentence 
        \[\forall L' \in U \left( \forall (x_1,\dots,x_n) (\exists t \in T, \ell \in L' (t+\ell=(x_1,\dots,x_n)))\right)  \] is true in the structure $\mathbb{R}_{\exp,\sin}$; it has parameters in $\mathbb{R}$ only, so it is also true in $\mathfrak{R}$. Since $L \in U(\mathfrak{R})$ by Lemma \ref{lem:reduction-linear-subspace}, we have that $T(\mathfrak{R})+L(\mathfrak{R})=\mathfrak{R}^n$, where $T(\mathfrak{R}) = \bigcup_{\tau \in S}{\tau(\mathfrak{R})}$ and for each $\tau \in S$, $\tau(\mathfrak{R})$ is the set of points in $\mathfrak{R}^n$ which satisfy the semi-linear inequalities defining $\tau$.

         Let now $z \in (\mathfrak{C}^\times)^n$ and set $\gamma = \val(z) \in \Gamma^n$. Let $a \in (\mathfrak{R}_{>0})^n$ with $\val(a) = \gamma$. Then $\log(a) \in \mathfrak{R}^n$ and so there are $\tau \in S$, $t \in \tau(\mathfrak{R})$, and $\ell \in L(\mathfrak{R})$ such that $t-\ell=-\log(a)$. It follows that $\exp(-t)\exp(\ell)=a$, and $-\val(\exp(t))+\val(\exp(\ell))=\gamma$. Since $\exp:(\mathfrak{R},+) \rightarrow (\mathfrak{R}_{>0},\cdot)$ and $-\val:(\mathfrak{R}_{>0},\cdot) \rightarrow (\Gamma,+)$ are both homomorphisms of ordered abelian groups and $\tau$ is a cone defined by homogeneous semi-linear inequalities with integer coefficients, we have that $-\val(\exp(t)) \in \tau$. 
        This means that $z_0 = z \cdot \exp(-\ell) \in z \cdot \exp(L(\mathfrak{C}))$ satisfies
        \[\val(z_0)=\gamma - \val(\exp(\ell)) = -\val(\exp(t)) \in \tau.\]
        We set $\alpha = \val(z_0)$.

        Since $\alpha \in \tau$, by definition of the tropicalization, there exists $z' \in W(\mathfrak{C})$ with $\val(z') = \alpha$. It follows that $\val\left(\frac{z_0}{z'}\right) = \val\left(\frac{z'}{z_0}\right) = 0$ and so $\frac{z'}{z_0} \in (\mathcal{O}^\times)^n$. Therefore
        \[ z' = \frac{z'}{z_0} \cdot z_0 \in z \cdot \exp(L(\mathfrak{C})) \cdot (\mathcal{O}^\times)^n\]
        and we are done.
    \end{proof}

    \begin{thm}\label{thm:main-tropical}
        Let $W \subseteq \mathbb{G}_{m}^n$ be an algebraic subvariety of codimension $d$ that is geometrically non-degenerate. 
        
        There is $\epsilon \in \mathbb{R}_{>0}$ such that for every linear subspace $L \in G_{\mathbb{R}}(d,n)$ and every $z \in (\ctimes)^n$ there is $s_1 \in \mathbb{S}_1^n(\mathbb{C})$ such that $B(s_1,\epsilon) \subseteq \frac{z^{-1} \cdot W(\mathbb{C})}{\exp(L(\mathbb{C}))}$.
    \end{thm}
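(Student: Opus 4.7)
\medskip

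\textbf{Plan.} The strategy is to argue by contradiction and pass to the non-standard setting $\mathfrak{C}$, where the uniformity statement becomes a single existential one to which Proposition \ref{prop:non-archimedean-version}, Proposition \ref{prop:characterize-rotund}, and Lemma \ref{lem:open-mapping-in-frakC} can be applied.

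First I would package the failure of uniformity via saturation. Suppose, aiming for a contradiction, that no such $\epsilon$ exists. For each $k \in \mathbb{N}_{>0}$, there are $L_k \in G_{\mathbb{R}}(d,n)$ and $z_k \in (\mathbb{C}^\times)^n$ such that, for every $s_1 \in \mathbb{S}_1^n(\mathbb{C})$, $B(s_1,1/k) \not\subseteq z_k^{-1}W(\mathbb{C})/\exp(L_k(\mathbb{C}))$. These conditions are first-order expressible over $\mathbb{R}_{\exp,\sin}$, nest with $k$, and hence form a finitely satisfiable partial type in the two variables representing $L$ and $z$. Taking $\mathfrak{R}$ sufficiently saturated, I realise the type by a single pair $(L,z) \in G(d,n)(\mathfrak{R}) \times (\mathfrak{C}^\times)^n$ with the property that, for every standard $\epsilon > 0$ and every $s_1 \in \mathbb{S}_1^n(\mathfrak{C})$, $B(s_1,\epsilon)(\mathfrak{C}) \not\subseteq z^{-1}W(\mathfrak{C})/\exp(L(\mathfrak{C}))$.

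Next, apply Proposition \ref{prop:non-archimedean-version} to this $L$ and $z$, obtaining $\tau \in \trop(W)$ with $\res(L) \times W_\tau$ rotund, together with $z' = z\exp(\ell) u \in W(\mathfrak{C})$ where $\ell \in L(\mathfrak{C})$, $u \in (\mathcal{O}^\times)^n$ and $\val(z') \in \tau$. By Lemmas \ref{lem:componentsofinitialform} and \ref{lem:reduction-linear-subspace}, every irreducible component of $W_\tau$ has codimension $d = \dim\res(L)$, so Proposition \ref{prop:characterize-rotund}(\ref{eight}) applied to $\res(L) \times W_\tau$ with $z_0 = (1,\ldots,1)$ yields $\epsilon_0 > 0$ and $s_1 \in \mathbb{S}_1^n(\mathbb{C})$ with $B(s_1,\epsilon_0) \subseteq W_\tau(\mathbb{C})/\exp(\res(L)(\mathbb{C}))$.

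The heart of the proof is then to lift this $\mathbb{C}$-ball to an $\epsilon_0$-ball inside $z^{-1}W(\mathfrak{C})/\exp(L(\mathfrak{C}))$, via Lemma \ref{lem:open-mapping-in-frakC}. I would adapt the argument used in the proof of $(\ref{seven}) \Rightarrow (\ref{one})$ in Proposition \ref{prop:characterize-rotund}: set $a := s(\val(z'))$, pick a preimage $(\widetilde\ell_0,\widetilde w_0) \in \res(L)(\mathbb{C}) \times W_\tau^\circ(\mathbb{C})$ of $s_1$ using the discreteness of the fibres of $\delta$ on $\res(L) \times W_\tau^\circ$ guaranteed by Proposition \ref{prop:characterize-rotund}(\ref{two}), and lift the two coordinates to $L(\mathfrak{C})$ and to $a^{-1}W(\mathfrak{C}) \cap (\mathcal{O}^\times)^n$ using Lemmas \ref{lem:reduction-linear-subspace}, \ref{lem:equal-residue} and Remark \ref{rem:equal-residue}. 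Absorbing the factor $z$ into the $\mathbb{C}$-definable family of translates of $W$ that Lemma \ref{lem:open-mapping-in-frakC} allows, and transferring the resulting $\mathbb{C}$-fibre-compactness estimate to $\mathfrak{C}$ via model completeness of the theory of algebraically closed non-trivially valued fields of residue characteristic $0$, I check the hypotheses of Lemma \ref{lem:open-mapping-in-frakC} (possibly after shrinking $\epsilon_0$ and choosing a suitable $\epsilon_1 > 0$). The lemma then yields $B(s_1,\epsilon_0)(\mathfrak{C}) \subseteq z^{-1}W(\mathfrak{C})/\exp(L(\mathfrak{C}))$, contradicting the first step.

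The principal obstacle will be exactly this lifting: verifying that the preimage of $B(s_1,\epsilon_0)(\mathfrak{C})$ under $\delta$ inside a small open ball around the lifted basepoint coincides with its preimage in the corresponding closed ball. This requires careful bookkeeping of how the splitting $a$, the translate by $z$, and the factor $u \in (\mathcal{O}^\times)^n$ interact with the $\mathfrak{R}$-structure of $L$ and with the residue map, so that the $\mathbb{C}$-level openness of $\delta$ on $\res(L) \times W_\tau^\circ$ transfers to the compactness-type condition that Lemma \ref{lem:open-mapping-in-frakC} demands.
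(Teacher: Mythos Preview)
Your plan follows the paper's own argument and invokes the same key lemmas in the same order; the contradiction-via-saturation framing is a stylistic variant of the paper's direct approach (prove the claim for every $(L,z)$ over $\mathfrak{C}$ with some standard $\epsilon$, then note that any positive infinitesimal witnesses the outer $\exists\epsilon$, so the sentence transfers down to $\mathbb{R}_{\exp,\sin}$ without needing saturation).

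There is one genuine step you have glossed over: you cannot simply ``pick a preimage in $W_\tau^\circ$'' of the point $s_1$ produced by condition (\ref{eight}). That condition only places $B(s_1,\epsilon_0)$ in the image of the full $W_\tau$, and the fibre over $s_1$ might lie entirely in $W_\tau\setminus W_\tau^\circ$, where the fibres of $\delta$ are positive-dimensional and the boundary hypothesis of Lemma \ref{lem:open-mapping-in-frakC} cannot be arranged. The paper handles this by first noting that $\res(L)\times(W_\tau\setminus W_\tau^\circ)$ is not rotund, so its $\delta$-image lies in a countable union of proper analytic subsets of $(\ctimes)^n$; a Baire-category argument on $\mathbb{S}_1^n(\mathbb{C})$ then locates a point of $\mathbb{S}_1^n$ inside the open set from (\ref{eight}) that does come from $W_\tau^\circ$, and only after this does condition (\ref{four}) yield a ball with an isolated preimage point suitable for lifting. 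On the bookkeeping you flag as the principal obstacle: the paper applies (\ref{eight}) with the translate by $\res(s(\alpha)z'^{-1}y)$ rather than by $1$, so that the lifted ball lands in the correct set $z^{-1}W(\mathfrak{C})/\exp(L(\mathfrak{C}))$, and it verifies the boundary estimate of Lemma \ref{lem:open-mapping-in-frakC} by comparing residues directly (via Lemma \ref{lem:equal-residue}, Remark \ref{rem:equal-residue}, and Lemma \ref{lem:reduction-linear-subspace}) rather than by an ACVF transfer.
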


    \begin{proof}
        Let $L \in G_{\mathfrak{R}}(d,n)$. We will show that for all $z \in (\mathfrak{C}^\times)^n$, there are $s_1 \in \mathbb{S}_1^n(\mathfrak{C})$ and $\epsilon \in \mathbb{R}_{>0}$ such that $B(s_1, \epsilon)(\mathfrak{C}) \subseteq \frac{z^{-1} \cdot W(\mathfrak{C})}{\exp(L(\mathfrak{C}))}$. It is important to notice that $\epsilon$ is in $\mathbb{R}_{>0}$ and not just in $\mathfrak{R}_{>0}$, which is what makes this claim useful, but more difficult to prove.
        
        Let $S \subseteq \trop(W)$ be the finite set obtained by applying Proposition \ref{prop:non-archimedean-version} to $L$ and $W$, and let $z \in (\mathfrak{C}^\times)^n$. Then there are $\tau \in S$ and $z' \in (z \cdot \exp(L(\mathfrak{C})) \cdot (\mathcal{O}^\times)^n) \cap W(\mathfrak{C})$ such that $\alpha := \val(z') \in \tau$ and $\res(L) \times W_\tau$ is rotund. There exists $\tau' \in \trop(W)$ such that $\tau' \subseteq \tau$ and $\alpha \in \relint(\tau')$. By Lemma \ref{lem:rotundity-face}, also $\res(L) \times W_{\tau'}$ is rotund and so, after replacing $\tau$ by $\tau'$, we can assume without loss of generality that $\alpha \in \relint(\tau)$.

        Proposition \ref{prop:characterize-rotund}(\ref{two}) associates to each irreducible component $W_{\tau,\mathrm{Irr}}$ of $W_\tau$ such that $\res(L) \times W_{\tau,\mathrm{Irr}}$ is rotund a non-empty Zariski open subset, of which we will assume that it contains no points that also lie in another irreducible component of $W_\tau$; let $W_\tau^\circ$ be the union of all these sets. Note that the same set $W_\tau^\circ$ satisfies Condition (\ref{four}) of that proposition, as stated in its proof. By definition of $W_\tau^\circ$, we have that for every irreducible component $W_{\tau,\mathrm{Irr}}$ of $W_\tau$ either $\res(L) \times W_{\tau,\mathrm{Irr}}$ is not rotund or $\dim (W_{\tau,\mathrm{Irr}} \setminus W_\tau^\circ) < \dim W$ by Lemma \ref{lem:componentsofinitialform}. Therefore $\res(L) \times (W_\tau \setminus W_\tau^\circ)$ is not rotund. Since $z' \in z \cdot \exp(L(\mathfrak{C})) \cdot (\mathcal{O}^\times)^n$, there exists $\ell \in L(\mathfrak{C})$ such that $y = z' \cdot z^{-1} \cdot \exp(-\ell) \in (\mathcal{O}^\times)^n$ and so $\res(y) \in (\mathbb{C}^\times)^n.$
        We also deduce that
       \begin{equation}\label{eq:equal-translates}
           \frac{z^{-1} \cdot W(\mathfrak{C})}{\exp(L(\mathfrak{C}))} = \frac{z'^{-1} \cdot y \cdot W(\mathfrak{C})}{\exp(L(\mathfrak{C}))}.\end{equation}
        
        By Proposition \ref{prop:characterize-rotund}(\ref{eight}), there is a non-empty open subset $U$ of $\mathbb{S}_1^n(\mathbb{C})$ such that \[U \subseteq \frac{ \res(s(\alpha)z'^{-1}y) \cdot W_\tau(\mathbb{C})\ }{\exp(\res(L)(\mathbb{C}))}.\]
        
        Since $\res(L) \times (W_\tau \setminus W_\tau^\circ)$ is not rotund, Proposition \ref{prop:characterize-rotund}(\ref{three}), the upper semicontinuity of the fiber dimension of a holomorphic map \cite[Proposition 49.14]{Kaup_Kaup_1983}, and the Open Mapping Theorem together imply that, for every irreducible component $W_{\tau,\mathrm{Irr}}$ of $W_\tau \setminus W_\tau^\circ$ of dimension $\dim W_\tau$, all complex analytic components of all fibers of the map
        \[ \res(L)(\mathbb{C}) \times W_{\tau,\mathrm{Irr}}(\mathbb{C}) \to (\ctimes)^n, \quad (\ell,w) \mapsto w/\exp(\ell)\]
        are positive-dimensional.
        It then follows from \autocite[Proposition on p. 41 in Section 3.8 of Chapter 1]{Chirka_1989} that the set 
        \[\frac{ \res(s(\alpha)z'^{-1}y) \cdot (W_\tau \setminus W_\tau^\circ)(\mathbb{C})\ }{\exp(\res(L)(\mathbb{C}))}\] is contained in a countable union of complex analytic sets in $(\ctimes)^n$ of dimension $<n$. No non-empty open subset of $\mathbb{S}_1^n(\mathbb{C})$ is contained in the closure of a complex analytic set of dimension $<n$ (see \autocite[Proposition 6.10 and Corollary 6.11]{Gal23} for a proof), so by the Baire category theorem no non-empty open subset of $\mathbb{S}_1^n(\mathbb{C})$ is contained in a countable union of such sets either.
        
        Therefore, there exists some $s_0$ that lies in
        \[ U \cap \frac{ \res(s(\alpha)z'^{-1}y) \cdot W_\tau^\circ(\mathbb{C})\ }{\exp(\res(L)(\mathbb{C}))}.\]
        Since $W_\tau^\circ$ satisfies Condition (\ref{four}) of Proposition \ref{prop:characterize-rotund} for $\res(L) \times W_\tau$, there exists $\epsilon_0 \in \mathbb{R}_{>0}$ such that
        \[B(s_0,\epsilon_0) \subseteq \frac{ \res(s(\alpha)z'^{-1}y) \cdot W_\tau^\circ(\mathbb{C})\ }{\exp(\res(L)(\mathbb{C}))}.\] To ease notation, we now replace $W$ by $s_0^{-1} \cdot W$. This replaces $W_\tau$ and $W_\tau^\circ$ with $s_0^{-1} \cdot W_\tau$ and $s_0^{-1} \cdot W_\tau^\circ$ respectively, and $\trop(s_0^{-1} \cdot W)=\trop(W)$, so everything we have established so far carries over, with the only difference being that now we may take $s_0=1$. So there is $w_1 \in \res(s(\alpha)z'^{-1}y) \cdot W_\tau^\circ(\mathbb{C})$ such that $1 \in \frac{w_1}{\exp(\res(L))(\mathbb{C})}$, that is, such that $w_1 =\exp(\ell_1)$ for some $\ell_1\in \res(L)(\mathbb{C})$.

        By choosing $\epsilon_1 \in \mathbb{R}_{>0}$ small enough, we can make sure that $|w_1^{-1}|_\infty < \epsilon_1^{-1}$ and that, if $(\ell,w) \in (\res(L) \times \res(s(\alpha)z'^{-1}y) \cdot W_\tau)(\mathbb{C}) \cap \overline{B((\ell_1,w_1),\epsilon_1)}$ with $w = \exp(\ell)$, then $(\ell,w) = (\ell_1,w_1)$. After maybe decreasing $\epsilon_0$, we can also assume that
        \begin{equation}\label{eq:min} \min_{(\ell,w) \in (\res(L) \times \res(s(\alpha)z'^{-1}y) \cdot W_\tau)(\mathbb{C}) \cap \partial B((\ell_1,w_1),\epsilon_1)}{|w/\exp(\ell)-1|} > \epsilon_0.
        \end{equation}

        Recall that $L(\mathcal{O}) = L(\mathfrak{C}) \cap \mathcal{O}^n$. By Lemma \ref{lem:equal-residue} and its strengthening discussed in Remark \ref{rem:equal-residue} as well as Lemma \ref{lem:reduction-linear-subspace} and the definition of $\res(L)$, there exist $\ell_0 \in L(\mathcal{O})$ and $w_0 \in \left(z'^{-1} \cdot y \cdot W(\mathfrak{C})\right) \cap (\mathcal{O}^\times)^n$ such that $\res(\ell_0) = \ell_1$ and $\res(w_0) = w_1$. We want to apply Lemma \ref{lem:open-mapping-in-frakC} to deduce that
        \[B(w_0/\exp(\ell_0),\epsilon_0)(\mathfrak{C}) \subseteq \frac{z'^{-1} \cdot y \cdot W(\mathfrak{C})}{\exp(L(\mathfrak{C}))}.\]
        We have that $\res(|w_0^{-1}|_\infty) = |w_1^{-1}|_\infty < \epsilon_1^{-1}$ and so $|w_0^{-1}|_\infty < \epsilon_1^{-1}$.

        Suppose that $(\ell,w) \in (L \times z'^{-1} \cdot y \cdot W)(\mathfrak{C}) \cap \partial B((\ell_0,w_0),\epsilon_1)(\mathfrak{C})$. 
        Since $\res(|w_0^{-1}|_\infty) < \epsilon_1^{-1}$, we have that $\ell \in L(\mathcal{O})$ and $w \in (\mathcal{O}^\times)^n$. By Lemma \ref{lem:equal-residue} and its strengthening discussed in Remark \ref{rem:equal-residue} as well as Lemma \ref{lem:reduction-linear-subspace} and the definition of $\res(L)$, we also have that
        \[ (\res(\ell),\res(w)) \in (\res(L) \times \res(s(\alpha)z'^{-1}y) \cdot W_\tau)(\mathbb{C}) \cap \partial B((\ell_1,w_1),\epsilon_1).\]
        It then follows from  \eqref{eq:min} that
        \[\varepsilon_0 < |\res(w)/\exp(\res(\ell))-1| = \res(|w/\exp(\ell)-w_0/\exp(\ell_0)|)\]
        and so
        \[\varepsilon_0 < |w/\exp(\ell)-w_0/\exp(\ell_0)|.\]
        Hence, the hypothesis of Lemma \ref{lem:open-mapping-in-frakC} is satisfied and we deduce that
        \[B(w_0/\exp(\ell_0),\epsilon_0)(\mathfrak{C}) \subseteq \frac{z'^{-1} \cdot y \cdot W(\mathfrak{C})}{\exp(L(\mathfrak{C}))}.\]
        Finally, $\res(|w_0/\exp(\ell_0)-1|) = 0$ and so we deduce that
        \[B(1,\epsilon_0/2)(\mathfrak{C}) \subseteq \frac{z'^{-1} \cdot y \cdot W(\mathfrak{C})}{\exp(L(\mathfrak{C}))} \stackrel{\eqref{eq:equal-translates}}{=} \frac{z^{-1} \cdot W(\mathfrak{C})}{\exp(L(\mathfrak{C}))},\] which proves the claim from the beginning of this proof.
        
        We have proved that in $\mathfrak{R}$ the following first-order sentence holds $$\exists \epsilon >0 \left(\forall z \in \mathbb{G}_m^n \forall L \in G(d,n) \left(\exists s_1 \in \mathbb{S}_1^n \left(B(s_1,\epsilon) \subseteq \frac{z^{-1} \cdot W}{\exp(L)}\right)\right)\right)$$ because any positive infinitesimal element is a witness for it. Hence, this sentence must also hold true in $\mathbb{R}_{\exp,\sin}$, completing the proof.
    \end{proof}

    \begin{rem}
        If $W \subseteq \mathbb{G}_m^n$ is irreducible and not geometrically non-degenerate, then there is $L \leq \mathbb{G}_a^n$ defined over $\mathbb{Q}$ and of dimension $n - \dim W $ such that $\frac{W(\mathbb{C})}{\exp(L(\mathbb{C}))}$ has empty interior, so there is no $\epsilon$ as in Theorem \ref{thm:main-tropical}. Therefore the converse of Theorem \ref{thm:main-tropical} holds for irreducible subvarieties of $\mathbb{G}_m^n$, giving a characterization of geometrical non-degeneracy of a similar nature as the characterization of rotundity in Proposition \ref{prop:characterize-rotund}.
    \end{rem}

    \section{Equidistribution and proofs of the main theorems}\label{sec:equi}

     In this section, we will apply a well-known equidistribution theorem for Galois orbits of torsion points in algebraic tori (Theorem \ref{thm:torsion_points_equidistribute}) to prove Theorems \ref{thm:main} and \ref{thm:main-two}. 

    We denote the set of roots of unity in $\mathbb{C}^{\times}$ by $\mu_{\infty}$, and the algebraic closure of $\mathbb{Q}$ inside $\mathbb{C}$ by $\overline{\mathbb{Q}}$. For an element $z = (z_1,\hdots,z_n) \in (\mathbb{C}^\times)^n$ and a set $S$ of field automorphisms of $\mathbb{C}$, we define
    \[ S \cdot z = \{\sigma(z) \mid \sigma \in S\},\]
    where $\sigma(z)$ is defined to be
    \[ (\sigma(z_1),\hdots,\sigma(z_n)) \]
    for a field automorphism $\sigma$ of $\mathbb{C}$.

\begin{thm}\label{thm:torsion_points_equidistribute}
    Let $K \subseteq \mathbb{C}$ be a subfield that is finitely generated over $\mathbb{Q}$, let $n \in \mathbb{Z}_{>0}$, and let $B \subseteq (\mathbb{C}^\times)^n$ be some open Euclidean ball such that $B \cap \mathbb{S}^n_1(\mathbb{C}) \neq \emptyset$. 
    
  Let $(\zeta_j)_{j \in \mathbb{N}}$ be a sequence in $\mu_\infty^n$ such that for every algebraic subgroup $G \subsetneq \mathbb{G}^n_{m}$, the set of $j \in \mathbb{N}$ such that $\zeta_j \in G(\mathbb{C})$ is finite.
	
    Then there exists $N \in \mathbb{N}$ such that for all $j \in \mathbb{N}$: 
    \[ j \geq N \implies (\mathrm{Aut}(\mathbb{C}/K) \cdot \zeta_j) \cap B \cap \mathbb{S}^n_1(\mathbb{C}) \neq \emptyset.\]
\end{thm}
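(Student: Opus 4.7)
The plan is to derive this directly from Bilu's equidistribution theorem for torsion points in algebraic tori, which states precisely that, for a strict sequence of torsion points, the uniform probability measures supported on their Galois orbits converge weakly to the normalized Haar measure $\mu$ on $\mathbb{S}_1^n(\mathbb{C})$. First I would reduce to the case where $K$ is a number field: since $K$ is finitely generated over $\mathbb{Q}$, the subfield $K_0 := K \cap \overline{\mathbb{Q}}$ is a number field, and since each $\zeta_j$ lies in $\mu_\infty^n \subseteq \overline{\mathbb{Q}}^n$, its $\mathrm{Aut}(\mathbb{C}/K)$-orbit coincides with its $\mathrm{Gal}(\overline{\mathbb{Q}}/K_0)$-orbit, which is finite. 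So I may replace $K$ by $K_0$.

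Next I would verify the hypotheses of Bilu's theorem over $K_0$. Torsion points have canonical height zero, and the assumption that every proper algebraic subgroup $G \subsetneq \mathbb{G}_m^n$ contains only finitely many $\zeta_j$ is precisely the strictness of the sequence. Hence the uniform probability measures $\mu_j$ supported on the finite Galois orbits $\mathrm{Aut}(\mathbb{C}/K) \cdot \zeta_j$ converge weakly to $\mu$. I would then pick a test function: since $U := B \cap \mathbb{S}_1^n(\mathbb{C})$ is non-empty and relatively open in $\mathbb{S}_1^n(\mathbb{C})$, choose a continuous non-negative function $f : (\mathbb{C}^\times)^n \to \mathbb{R}$ with compact support contained in $B$ and strictly positive at some point $s \in U$. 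By continuity $f$ is then strictly positive on a neighborhood of $s$ in $\mathbb{S}_1^n(\mathbb{C})$, so $\int f \, d\mu > 0$ because $\mathrm{supp}(\mu) = \mathbb{S}_1^n(\mathbb{C})$.

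By weak convergence, $\int f \, d\mu_j \to \int f \, d\mu > 0$, so for all $j$ sufficiently large there exists $\eta \in \mathrm{Aut}(\mathbb{C}/K) \cdot \zeta_j$ with $f(\eta) > 0$. This forces $\eta \in \mathrm{supp}(f) \subseteq B$, and since $\eta$ is a Galois conjugate of a torsion point, it is itself a torsion point, hence lies in $\mu_\infty^n \subseteq \mathbb{S}_1^n(\mathbb{C})$. This yields the required non-empty intersection with $B \cap \mathbb{S}_1^n(\mathbb{C})$. The only substantive ingredient is Bilu's theorem itself; beyond that, the reduction to a number field and the construction of a bump function $f$ supported in $B$ and positive at a point of $U$ are routine, so I do not anticipate real obstacles once the correct version of Bilu's theorem (valid for Galois orbits over an arbitrary number field) is invoked.
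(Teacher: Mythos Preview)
Your proposal is correct and follows essentially the same route as the paper: reduce to a number field via $K_0 = K \cap \overline{\mathbb{Q}}$, then invoke Bilu's equidistribution theorem. The paper is slightly more careful about one point you treat as routine, namely that the restriction map $\mathrm{Aut}(\mathbb{C}/K) \to \mathrm{Gal}(\overline{\mathbb{Q}}/K_0)$ is surjective (equivalently, that the two orbits of $\zeta_j$ coincide); the paper justifies this explicitly via \cite[Chapter VI, Theorem 1.12]{Lang_Algebra}, whereas you assert it without proof. Conversely, you spell out the extraction of the conclusion from weak convergence via a bump function supported in $B$, which the paper leaves implicit.
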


\begin{proof}
    Set $L = \overline{\mathbb{Q}} \cap K$. Then $L$ is a finite extension of $\mathbb{Q}$ by \cite[Theorem 24.9]{Isaacs_2009} and the restriction homomorphism $\mathrm{Aut}(\mathbb{C}/K) \to \mathrm{Gal}(\overline{\mathbb{Q}}/L)$ is surjective since $\mathrm{Gal}(\overline{\mathbb{Q}}K/K) \to \mathrm{Gal}(\overline{\mathbb{Q}}/L)$ is surjective by \cite[Chapter VI, Theorem 1.12]{Lang_Algebra} and $\mathrm{Aut}(\mathbb{C}/K) \to \mathrm{Gal}(\overline{\mathbb{Q}}K/K)$ is surjective as well. Hence, we can assume without loss of generality that $K$ is a finite extension of $\mathbb{Q}$.

    The theorem then follows from Bilu's equidistribution theorem; see \cite{Bilu_1997}, where the theorem is formulated over $\mathbb{Q}$, and see \cite{Kuehne_2022} for a proof of a related statement over an arbitrary number field.
	\end{proof}

\begin{lem}\label{lem:torsion_points_equidistribute}

    Let $K \subseteq \mathbb{C}$ be a subfield that is finitely generated over $\mathbb{Q}$, let $n \in \mathbb{Z}_{>0}$, and let $\epsilon > 0$. 
    
    Let $\mathscr{N}: \mu_{\infty}^n \to \mathbb{Z}_{> 0}$ be defined by
	\[ \mathscr{N}(x) = \min \{\lVert u \rVert \mid u \in \mathbb{Z}^n \backslash \{0\} \mbox{ such that } x^{u} = 1 \},\]
    where $\lVert u \rVert$ is the $L^1$-norm of $u$.
	
	 There exists $N = N(n,K,\epsilon) \in \mathbb{Z}_{>0}$ such that for every $\xi \in \mu_\infty^n$ with $\mathscr{N}(\xi) > N$, the set $\mathrm{Aut}(\mathbb{C}/K) \cdot \xi$ intersects every open Euclidean ball of radius $\epsilon$ centered at a point of $\mathbb{S}_1^n(\mathbb{C})$.
\end{lem}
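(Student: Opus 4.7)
The plan is to deduce this as a compactness-type consequence of Theorem \ref{thm:torsion_points_equidistribute}, reasoning by contradiction. Suppose no such $N$ exists. Then for every $j \in \mathbb{N}$ we can choose $\xi_j \in \mu_\infty^n$ with $\mathscr{N}(\xi_j) > j$ together with a point $s_j \in \mathbb{S}_1^n(\mathbb{C})$ such that $(\mathrm{Aut}(\mathbb{C}/K) \cdot \xi_j) \cap B(s_j,\epsilon) = \emptyset$. Since $\mathbb{S}_1^n(\mathbb{C})$ is compact, after passing to a subsequence we may assume that $s_j$ converges to some $s \in \mathbb{S}_1^n(\mathbb{C})$, and hence $B(s,\epsilon/2) \subseteq B(s_j,\epsilon)$ for all $j$ large enough.

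The main point is now to verify that the sequence $(\xi_j)_{j \in \mathbb{N}}$ satisfies the hypothesis of Theorem \ref{thm:torsion_points_equidistribute}. Let $G \subsetneq \mathbb{G}^n_m$ be a proper algebraic subgroup. The character lattice of $\mathbb{G}^n_m$ being $\mathbb{Z}^n$, there is some $u \in \mathbb{Z}^n \setminus \{0\}$ such that $x^u = 1$ for every $x \in G(\mathbb{C})$. If $\xi_j \in G(\mathbb{C})$, then $\xi_j^u = 1$, which forces $\mathscr{N}(\xi_j) \leq \lVert u \rVert$; since $\mathscr{N}(\xi_j) > j$, this can occur for only finitely many $j$. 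Hence the hypothesis of Theorem \ref{thm:torsion_points_equidistribute} holds.

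Applying that theorem to the sequence $(\xi_j)_{j \in \mathbb{N}}$ and the open Euclidean ball $B(s,\epsilon/2)$ (which indeed meets $\mathbb{S}_1^n(\mathbb{C})$, since it contains $s$), we obtain that $(\mathrm{Aut}(\mathbb{C}/K) \cdot \xi_j) \cap B(s,\epsilon/2) \cap \mathbb{S}_1^n(\mathbb{C}) \neq \emptyset$ for all sufficiently large $j$. But $B(s,\epsilon/2) \subseteq B(s_j,\epsilon)$ for such $j$, contradicting the choice of the $s_j$ and $\xi_j$. This contradiction proves the existence of $N$.

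There is no real obstacle here; the lemma is essentially a uniform reformulation of Theorem \ref{thm:torsion_points_equidistribute}. The only subtle point is the compactness step, which is needed because a priori the "bad" balls $B(s_j,\epsilon)$ could wander around $\mathbb{S}_1^n(\mathbb{C})$ without stabilizing; replacing them by a common smaller ball $B(s,\epsilon/2)$ around a cluster point $s$ puts us in a position to invoke the non-uniform theorem directly.
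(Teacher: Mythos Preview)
Your proof is correct and follows essentially the same approach as the paper's: both argue by contradiction, use the compactness of $\mathbb{S}_1^n(\mathbb{C})$ to replace the wandering balls by a single fixed ball of radius $\epsilon/2$, verify the hypothesis of Theorem \ref{thm:torsion_points_equidistribute} via $\mathscr{N}(\xi_j)\to\infty$, and derive a contradiction. The only cosmetic difference is that the paper uses a finite cover of $\mathbb{S}_1^n(\mathbb{C})$ by $\epsilon/2$-balls and pigeonhole, whereas you pass to a convergent subsequence of centers.
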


\begin{proof}
  Since $\mathbb{S}_1^n(\mathbb{C})$ is compact, we can find finitely many open Euclidean balls $\widetilde{B}_1, \hdots, \widetilde{B}_M$ in $(\mathbb{C}^\times)^n$ centered at points of $\mathbb{S}_1^n(\mathbb{C})$ and of radius $\epsilon/2$ such that
    \[\mathbb{S}_1^n(\mathbb{C}) \subseteq \widetilde{B}_1 \cup \cdots \cup \widetilde{B}_M.\]

	We now argue by contradiction and assume that the lemma is false. Hence, there is a sequence $(\xi_j)_{j \in \mathbb{N}}$ in $\mu_\infty^n$ and a sequence of open Euclidean balls $(B_j)_{j \in \mathbb{N}}$ of radius $\epsilon$, each centered at some point of $\mathbb{S}_1^n(\mathbb{C})$, with the property that $\lim_{j \to \infty}{\mathscr{N}(\xi_j)} = \infty$ and
    \[ (\mathrm{Aut}(\mathbb{C}/K) \cdot \xi_j) \cap B_j = \emptyset\] for all $j \in \mathbb{N}$.
 
    For each $j \in \mathbb{N}$, there exists an $i(j) \in \{1,\hdots,M\}$ such that $\widetilde{B}_{i(j)}$ contains the center of $B_j$. It follows that $\widetilde{B}_{i(j)} \subseteq B_j$.

    We obtain that \emph{a fortiori} 
     \[ (\mathrm{Aut}(\mathbb{C}/K) \cdot \xi_j) \cap \widetilde{B}_{i(j)} = \emptyset\] for all $j \in \mathbb{N}$. After passing to a subsequence of $(\xi_j)_{j \in \mathbb{N}}$, we can assume that $i(j) = i_0$ for some $i_0 \in \{1,\hdots,M\}$ and all $j \in \mathbb{N}$.

     It follows that
    \[ (\mathrm{Aut}(\mathbb{C}/K) \cdot \xi_j) \cap \widetilde{B}_{i_0} = \emptyset\]
    for all $j \in \mathbb{N}$. At the same time, we deduce from $\lim_{j \to \infty}{\mathscr{N}(\xi_j)} = \infty$ that for every algebraic subgroup $G \subsetneq \mathbb{G}^n_{m}$, the set of $j \in \mathbb{N}$ such that $\xi_j \in G(\mathbb{C})$ is finite. We have found a contradiction with Theorem \ref{thm:torsion_points_equidistribute}, which finishes the proof.
	\end{proof}

\begin{lem}\label{lem:subgroups_equidistribute}
	Let $n \in \mathbb{Z}_{>0}$, let $K \subseteq \mathbb{C}$ be a subfield that is finitely generated over $\mathbb{Q}$, and let $\epsilon > 0$. There exists a finite set $\mathcal{G} = \{G_1,\hdots,G_M\}$, depending only on $n$, $K$, and $\epsilon$, such that $G_i \subsetneq \mathbb{G}^n_{m}$ is an algebraic subgroup for all $i = 1,\hdots,M$ and such that for every subtorus $J \subseteq \mathbb{G}^n_{m}$ and every torsion point $\zeta \in \mathbb{G}^n_{m}(\mathbb{C})$, one of the following holds:
	\begin{enumerate}
		\item[(i)] for every open Euclidean ball of radius $\epsilon$ centered at a point of $\mathbb{S}_1^n(\mathbb{C})$, the intersection of $\mathbb{S}_1^n(\mathbb{C}) \cap \sigma(\zeta) \cdot J(\mathbb{C})$ and the ball is not empty for some $\sigma \in \mathrm{Aut}(\mathbb{C}/K)$ or
		\item[(ii)] $\zeta \cdot J \subseteq G_i$ for some $i \in \{1,\hdots,M\}$.
	\end{enumerate}
\end{lem}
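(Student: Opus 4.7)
The plan is to apply Lemma \ref{lem:torsion_points_equidistribute} not to $\zeta$ itself but to a carefully chosen modification $\zeta \cdot \eta$, where $\eta$ is a torsion point of $J(\mathbb{C})$. Since $J$ is defined over $\mathbb{Q}$, every $\mathrm{Aut}(\mathbb{C}/K)$-conjugate of $\zeta\eta$ automatically lies in some $\sigma(\zeta) \cdot J(\mathbb{C}) \cap \mathbb{S}^n_1(\mathbb{C})$, which is exactly the set appearing in condition (i).

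More concretely, let $N_0 := N(n, K, \epsilon)$ be the constant produced by Lemma \ref{lem:torsion_points_equidistribute}, and for each nonzero $u \in \mathbb{Z}^n$ let $G_u := \{x \in \mathbb{G}^n_m : x^u = 1\}$, a proper algebraic subgroup of $\mathbb{G}^n_m$ of codimension one. The plan is to take
\[ \mathcal{G} := \{G_u : u \in \mathbb{Z}^n \setminus \{0\},~ \|u\| \leq N_0\},\]
which is manifestly finite. The key observation is that $\zeta \cdot J \subseteq G_u$ holds if and only if both $\zeta^u = 1$ and $J \subseteq G_u$ (evaluating the character $u$ at the identity of $J$ and at a generic point); therefore, if (ii) fails for this $\mathcal{G}$, then for every nonzero $u \in \mathbb{Z}^n$ with $\|u\| \leq N_0$ we have $J \not\subseteq G_u$ or $\zeta^u \neq 1$.

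Assuming (ii) fails, I would then produce a torsion $\eta \in J(\mathbb{C})$ with $\mathscr{N}(\zeta\eta) > N_0$. Fix a nonzero $u \in \mathbb{Z}^n$ with $\|u\| \leq N_0$: if $J \subseteq G_u$, then $\eta^u = 1$ while $\zeta^u \neq 1$, so $(\zeta\eta)^u \neq 1$ holds for free; if $J \not\subseteq G_u$, then $u|_J$ is a nontrivial character of $J$, and the condition $(\zeta\eta)^u \neq 1$ amounts to $\eta$ avoiding the translate $\{y \in J : y^u = \zeta^{-u}\}$, a proper Zariski closed subset of $J$. When $\dim J > 0$, the union of these finitely many proper closed subsets cannot cover the irreducible variety $J$, and since torsion points are Zariski dense in $J$ we can select a torsion $\eta \in J(\mathbb{C})$ outside every one of them. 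The degenerate case $\dim J = 0$ (so $J = \{1\}$) is immediate: every $u$ then vanishes on $J$, no constraints on $\eta$ arise, and $\eta := 1$ works (the failure of (ii) becomes the direct hypothesis $\mathscr{N}(\zeta) > N_0$).

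With such an $\eta$ in hand, Lemma \ref{lem:torsion_points_equidistribute} applied to the torsion point $\zeta\eta$ yields, for every open Euclidean ball $B$ of radius $\epsilon$ centered at a point of $\mathbb{S}^n_1(\mathbb{C})$, some $\sigma \in \mathrm{Aut}(\mathbb{C}/K)$ with $\sigma(\zeta\eta) \in B$. Since $\sigma(\eta) \in J(\mathbb{C})$ (because $J$ is $\mathbb{Q}$-rational) and $\sigma(\zeta\eta)$ is torsion, hence in $\mathbb{S}^n_1(\mathbb{C})$, this $\sigma(\zeta\eta)$ lies in $\mathbb{S}^n_1(\mathbb{C}) \cap \sigma(\zeta) \cdot J(\mathbb{C}) \cap B$, which establishes (i). The main (mild) obstacle in this plan is simply ensuring the existence of the torsion $\eta$ when $J$ is positive-dimensional; this comes down to the Zariski density of torsion in a connected algebraic torus, together with the fact that a finite union of proper closed subvarieties of an irreducible variety is still a proper subset.
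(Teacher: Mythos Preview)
Your proposal is correct and is essentially the same argument as the paper's: both define $\mathcal{G}$ as the hypersurface subgroups $G_u$ with $\|u\| \leq N(n,K,\epsilon)$, produce a torsion point $\xi \in \zeta \cdot J(\mathbb{C})$ with $\mathscr{N}(\xi) > N$, and then apply Lemma~\ref{lem:torsion_points_equidistribute}. The only cosmetic difference is that the paper phrases the existence of $\xi$ contrapositively (Zariski density of torsion in $\zeta J$ forces $\zeta J \subseteq G_u$ for some $u$ if no such $\xi$ exists), whereas you construct $\eta$ explicitly by avoiding finitely many proper closed subsets of $J$.
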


\begin{proof}
    Let $N = N(n,K,\epsilon)$ be the positive integer provided by Lemma \ref{lem:torsion_points_equidistribute}.

	Let $J$ be a subtorus of $\mathbb{G}^n_{m}$ and let $\zeta \in (\ctimes)^n$ be a torsion point. Since ($\zeta \cdot J(\mathbb{C})) \cap \mu_{\infty}^n$ is Zariski dense in $\zeta J$, it follows that either $\zeta J$ itself is contained in an algebraic subgroup defined by an equation $x^{u} = 1$ with $u \in \mathbb{Z}^n$ such that $0 < \lVert u \rVert \leq N$ or there is a point $\xi \in (\zeta \cdot J(\mathbb{C})) \cap \mu_{\infty}^n$ with $\mathscr{N}(\xi) > N$. In the second case, it follows from Lemma \ref{lem:torsion_points_equidistribute} that the first alternative in the conclusion of Lemma \ref{lem:subgroups_equidistribute} holds (note that the set $J(\mathbb{C})$ is invariant under any field automorphism of $\mathbb{C}$). We are now done by setting
	\begin{align*} \mathcal{G} = \{G \mid G \subseteq \mathbb{G}_m^n \mbox{ algebraic subgroup defined by an equation } \\
    x^{u} = 1 \mbox{ with $u \in \mathbb{Z}^n$ such that } 0 < \lVert u \rVert \leq N\}.
    \end{align*}
\end{proof}

\begin{proof}[Proof of Theorem \ref{thm:main}]
    By Theorem \ref{thm:main-tropical}, there exists $\epsilon > 0$ such that for every $z \in (\mathbb{C}^{\times})^n$ and every subtorus $H \subseteq \mathbb{G}^n_{m}$ with $\dim H + \dim W \geq n$ (in Theorem \ref{thm:main-tropical}, the dimensions add up to $n$, but this is no restriction since one can apply Theorem \ref{thm:main-tropical} with $L = LH'$ for any subtorus $H'$ of $H$ of dimension $n -\dim W$), the image of the map
	\[ \psi_{W,H,z}: W(\mathbb{C}) \times H(\mathbb{C}) \to (\mathbb{C}^{\times})^n, \quad (w,y) \mapsto wyz\]
	contains an open Euclidean ball of radius $\epsilon$ centered at a point of $\mathbb{S}_1^n(\mathbb{C})$.
	
	Let $z \in (\mathbb{C}^{\times})^n$ be an arbitrary point  and let $H \subseteq \mathbb{G}_m^n$ be a subtorus of dimension at least $n-\dim W$. Note that
	\begin{align*} W(\mathbb{C}) \cap z \cdot H(\mathbb{C}) \neq \emptyset &\Leftrightarrow z \in \psi_{W,H,1}(W(\mathbb{C}) \times H(\mathbb{C}))\\
    &\Leftrightarrow 1 \in \psi_{W,H,z^{-1}}(W(\mathbb{C}) \times H(\mathbb{C})).
    \end{align*}
	We know that the image of $\psi_{W,H,z^{-1}}$ contains an open Euclidean ball of radius $\epsilon$ centered at a point of $\mathbb{S}_1^n(\mathbb{C})$. Since
	\[ H(\mathbb{C}) \cdot \psi_{W,H,z^{-1}}(W(\mathbb{C}) \times H(\mathbb{C})) = \psi_{W,H,z^{-1}}(W(\mathbb{C}) \times H(\mathbb{C})),\]
	the theorem follows from Lemma \ref{lem:subgroups_equidistribute} applied with $\zeta = (1,\hdots,1)$ and $K = \mathbb{Q}$.
\end{proof}

\begin{proof}[Proof of Theorem \ref{thm:main-two}]

    By Theorem \ref{thm:main-tropical}, there exists $\epsilon > 0$ such that for every $z \in (\mathbb{C}^{\times})^n$ and every subtorus $H \subseteq \mathbb{G}^n_m$ with $\dim H + \dim W \geq n$ (again, we can apply Theorem \ref{thm:main-tropical} with $L = LH'$ for any subtorus $H'$ of $H$ of dimension $n -\dim W$), the image of the map
	\[ \psi_{W,H,z}: W(\mathbb{C}) \times H(\mathbb{C}) \to (\mathbb{C}^{\times})^n, \quad (w,y) \mapsto wyz\]
	contains an open Euclidean ball of radius $\epsilon$ centered at a point of $\mathbb{S}_1^n(\mathbb{C})$.

    Let now $H$ be a fixed subtorus of $\mathbb{G}^n_{m}$ such that $\dim H + \dim W \geq n$ and let $\zeta \in (\ctimes)^n$ be a torsion point. By the above, the image of $\psi_{W,H,1}$, which is equal to $W(\mathbb{C}) \cdot H(\mathbb{C})$, contains an open Euclidean ball of radius $\epsilon$ centered at a point of $\mathbb{S}_1^n(\mathbb{C})$.
	
    The subvariety $W$ of $\mathbb{G}^n_m$ can be defined by equations with coefficients in some field $K \subseteq \mathbb{C}$ that is finitely generated over $\mathbb{Q}$. It follows from Lemma \ref{lem:subgroups_equidistribute} that either $\zeta \cdot H$ is contained in one of finitely many algebraic subgroups $G \subsetneq \mathbb{G}^n_{m}$, depending only on $n$, $K$, and $\epsilon$, or $(W(\mathbb{C}) \cdot H(\mathbb{C})) \cap (\sigma(\zeta) \cdot H(\mathbb{C})) \neq \emptyset$ for some $\sigma \in \mathrm{Aut}(\mathbb{C}/K)$. We can assume without loss of generality that the second alternative holds and hence $W(\mathbb{C}) \cap \sigma(\zeta) \cdot H(\mathbb{C}) \neq \emptyset$.

    But since $W$ is defined over $K$, the set $W(\mathbb{C})$ is invariant under $\sigma^{-1}$ (and the same holds for $H(\mathbb{C}$)). It follows that $W(\mathbb{C}) \cap \zeta \cdot H(\mathbb{C}) \neq \emptyset$ and we are done.
\end{proof}

\section{Applications}\label{sec:appli}
We start this section by proving a version of Theorem \ref{thm:main}  over arbitrary algebraically closed fields of characteristic 0. The second-named author thanks Vahagn Aslanyan and Vincenzo Mantova for pointing this out.

\begin{thm}\label{thm:main-acf}
    Let $K$ be an algebraically closed field of characteristic $0$ and $W \subseteq \mathbb{G}_{m,K}^n$ a geometrically non-degenerate algebraic subvariety. 

    Then there exists a finite set $\mathcal{H}=\{H_1,\dots,H_N\}$ such that $H_i \subsetneq \mathbb{G}^n_{m,K}$ is a subtorus for all $H_i \in \mathcal{H}$ and such that for every subtorus $H \subseteq \mathbb{G}_{m,K}^n$ with $\dim H + \dim W \geq n$, one of the following holds:
    \begin{itemize}
        \item[(i)] For every $z \in (K^\times)^n$, $W(K) \cap z \cdot H(K) \neq \emptyset$ or
        \item[(ii)] $H \subseteq H_i$ for some $H_i \in \mathcal{H}$.
    \end{itemize}
\end{thm}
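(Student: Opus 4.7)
The plan is to transfer Theorem \ref{thm:main} from $\mathbb{C}$ to an arbitrary algebraically closed field $K$ of characteristic $0$ by a standard model-theoretic descent argument. The key structural fact that makes this transfer work smoothly is that every subtorus of $\mathbb{G}_m^n$ is defined over $\mathbb{Q}$, since such subtori are in bijection with the saturated sublattices of the character lattice $\mathbb{Z}^n$. In particular, both the putative set $\mathcal{H}$ and any subtorus $H$ appearing in the statement are intrinsic combinatorial objects, independent of the field over which we work.

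First, I would pick a finitely generated subfield $K_0 \subseteq K$ containing the coefficients of a chosen finite set of defining equations of $W$, and let $K_0^{\mathrm{alg}}$ be the algebraic closure of $K_0$ inside $K$. By model-completeness of the theory of algebraically closed fields of characteristic $0$, $K_0^{\mathrm{alg}}$ is an elementary substructure of $K$. Since $K_0^{\mathrm{alg}}$ has countable transcendence degree over $\mathbb{Q}$, I can fix an embedding $\iota: K_0^{\mathrm{alg}} \hookrightarrow \mathbb{C}$, whose image is likewise an elementary substructure of $\mathbb{C}$. Under $\iota$ the variety $W$ pulls over to a subvariety $W_{\mathbb{C}} := \iota(W)$ of $\mathbb{G}_{m,\mathbb{C}}^n$, which remains geometrically non-degenerate: the quotient maps $\pi_J$ associated to algebraic subgroups $J$ of $\mathbb{G}_m^n$ are themselves defined over $\mathbb{Q}$, so applying $\iota$ intertwines them with their complex analogues, and dimensions of images are preserved under field isomorphisms.

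Next, I would apply Theorem \ref{thm:main} to $W_{\mathbb{C}}$ to obtain a finite collection of proper subtori $H_1^{\mathbb{C}}, \ldots, H_N^{\mathbb{C}}$ of $\mathbb{G}_{m,\mathbb{C}}^n$. Via the bijection between subtori and sublattices of $\mathbb{Z}^n$, each $H_i^{\mathbb{C}}$ corresponds canonically to a proper subtorus $H_i$ of $\mathbb{G}_{m,K}^n$, and I set $\mathcal{H} = \{H_1, \ldots, H_N\}$. Now fix any subtorus $H \subseteq \mathbb{G}_{m,K}^n$ with $\dim H + \dim W \geq n$ which is not contained in any $H_i$. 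Since $H$ is again defined over $\mathbb{Q}$ and the containment ``$H \subseteq H_i$'' is a purely lattice-theoretic condition, the $\mathbb{C}$-version $H^{\mathbb{C}}$ of $H$ is not contained in any $H_i^{\mathbb{C}}$ and has the same dimension, so by Theorem \ref{thm:main} applied to $W_{\mathbb{C}}$, every $\mathbb{C}$-translate of $H^{\mathbb{C}}$ meets $W_{\mathbb{C}}$.

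Finally, I would invoke elementarity to descend. For our fixed $H$, the statement ``for every $z \in (K^\times)^n$, $W(K) \cap z \cdot H(K) \neq \emptyset$'' is a first-order sentence in the language of rings with parameters from $K_0$ only, as $H$ itself needs no further parameters. It holds when interpreted in $\mathbb{C}$ (for $\iota(W)$ and $H^{\mathbb{C}}$) by the previous paragraph, hence holds in $\iota(K_0^{\mathrm{alg}}) \preceq \mathbb{C}$; transporting back via the field isomorphism $\iota^{-1}$ yields its truth in $K_0^{\mathrm{alg}}$, and then $K_0^{\mathrm{alg}} \preceq K$ delivers it in $K$. There is no serious obstacle in this plan; the only points demanding care are the $\mathbb{Q}$-definability of all subtori of $\mathbb{G}_m^n$ and the matching-up of ``the same'' subtorus across different algebraically closed fields, so that the finite set $\mathcal{H}$ produced over $\mathbb{C}$ is faithfully transported to the $K$-setting.
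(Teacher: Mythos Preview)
Your proposal is correct and follows essentially the same approach as the paper: descend to a countable algebraically closed field of definition, embed into $\mathbb{C}$, apply Theorem \ref{thm:main} there, and transfer the resulting finite set of subtori (all defined over $\mathbb{Q}$) back to $K$. The only cosmetic difference is that the paper handles each $z \in (K^\times)^n$ individually by extending the embedding to $K_0(z)$, whereas you package the transfer as a single first-order sentence via $K_0^{\mathrm{alg}} \preceq K$ and $\iota(K_0^{\mathrm{alg}}) \preceq \mathbb{C}$; both are standard manifestations of the completeness of ACF$_0$.
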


\begin{proof}
    Let $K_0$ be a countable algebraically closed subfield of $K$ such that $W$ is defined over $K_0$. Let $\iota:K_0 \hookrightarrow \mathbb{C}$ be an embedding, so that we may see $W$ as defined over $\mathbb{C}$. We may then apply Theorem \ref{thm:main} and obtain a finite set $\mathcal{H}=\{H_1,\dots,H_N\}$ of subtori. Let $H \subseteq \mathbb{G}_{m,K}^n$ be a subtorus with $\dim H + \dim W \geq n$, let $z \in (K^\times)^n$, and assume that $H \nsubseteq H_i$ for every $H_i \in \mathcal{H}$. Note that $H$ and all of the $H_i$ are defined over $\mathbb{Q}$. Extend the embedding $\iota$ to an embedding $\iota'$ of $K_0(z)$ into $\mathbb{C}$. By Theorem \ref{thm:main}, $W(\mathbb{C}) \cap \iota'(z) \cdot H(\mathbb{C}) \neq \emptyset$. We may then conclude using the fact that $K$ is algebraically closed and that $\iota'$ is an embedding.
\end{proof}

A similar argument allows us to generalize also Theorem \ref{thm:main-two} to arbitrary algebraically closed fields of characteristic $0$.

We now obtain some applications of Theorems \ref{thm:main} and \ref{thm:main-two}, which in view of what we just showed are valid over arbitrary algebraically closed fields of characteristic $0$. First we show that results on likely intersections allow us to give a new proof of the Manin--Mumford conjecture for algebraic tori.

\begin{prop}\label{prop:proof-manin-mumford}
Let $K$ be an algebraically closed field of characteristic $0$ and let $V$ be a subvariety of $\mathbb{G}^n_{m,K}$. Then the Zariski closure of the set of torsion points that lie on $V$ is a finite union of torsion cosets.
\end{prop}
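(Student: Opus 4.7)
The plan is to prove the Manin--Mumford conjecture via Theorem \ref{thm:main-two}, following the hint that dense torsion on $V$ (an ``unlikely intersection happening'' of $V$ with the union of the $0$-dimensional torsion cosets) should be converted into a ``likely intersection not happening'' (the emptiness of $W \cap \zeta H$ for some geometrically non-degenerate $W$, some subtorus $H$ with $\dim H + \dim W \geq n$, and some torsion $\zeta$) that contradicts Theorem \ref{thm:main-two}. The set-up is a joint induction: Noetherian induction on $V$ lets us pass to irreducible components and so reduce to the case that $V$ is irreducible and equals the Zariski closure of its torsion points (so torsion is dense in $V$); we then induct on the ambient dimension $n$, with the goal of showing $V$ is a torsion coset.

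I begin with two standard reductions that invoke the inductive hypothesis on $n$. If $V$ is contained in a proper torsion coset $\zeta T$ with $T \subsetneq \mathbb{G}_{m,K}^n$ a subtorus, then $\zeta^{-1} V \subseteq T \cong \mathbb{G}_{m,K}^m$ for $m < n$, the translate $\zeta^{-1}V$ inherits dense torsion, and by induction it is a torsion coset, hence so is $V$. If the connected stabilizer $H_V$ of $V$ is positive-dimensional, I push forward to the quotient $\mathbb{G}_{m,K}^n / H_V$, whose ambient dimension is smaller; dense torsion is preserved, induction applies, and $V$ is again a torsion coset. After these reductions I may assume $V$ is not contained in any proper torsion coset, has trivial connected stabilizer, and, for contradiction, $V \neq \mathbb{G}_{m,K}^n$.

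The next step is to show that $V$ is geometrically non-degenerate. Suppose $\dim \pi_J(V) < n - \dim J$ for some positive-dimensional subgroup $J$; then the irreducible subvariety $\overline{\pi_J(V)} \subseteq \mathbb{G}_{m,K}^n/J$ has dense torsion in a smaller ambient torus, so by induction it is a torsion coset, and pulling back gives a proper torsion coset of $\mathbb{G}_{m,K}^n$ containing $V$, a contradiction. Combined with $\dim \pi_J(V) \leq \dim V$ and a direct dimension count, this forces $\dim V = n-1$ (for any $J$ with $0 < \dim J < n - \dim V$, the two inequalities $\dim \pi_J(V) \leq \dim V$ and $\dim \pi_J(V) \geq n - \dim J$ are incompatible unless such $J$ does not exist, i.e.\ $\dim V \geq n-1$). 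Thus we are reduced to the case where $V$ is a geometrically non-degenerate hypersurface.

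Finally, apply Theorem \ref{thm:main-two} to $V$ to obtain a finite family $\mathcal{G} = \{G_1, \dots, G_N\}$ of proper algebraic subgroups such that any $1$-dimensional torsion coset $\eta H$ missing $V$ must lie in some $G_i$. The main obstacle---and the content of the ``transformation''---is to produce, from the density of torsion on $V$, a torsion coset $\eta H$ with $V \cap \eta H = \emptyset$ and $\eta H \not\subseteq G_i$ for any $i$, contradicting Theorem \ref{thm:main-two}. Because $V \not\subseteq G_i$ (any inclusion would place $V$ in a torsion coset, contrary to our reductions), the dense torsion in $V$ provides torsion points on $V$ outside $\bigcup G_i$; the task is to leverage these to build, via the Laurent-polynomial defining $V$ and a careful choice of direction $u \in \mathbb{Z}^n$, an escaping torsion line in the complement of $V$. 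I expect this step to be the delicate one, since the $G_i$'s can themselves contain infinitely many $1$-dimensional torsion cosets, so one must exhibit an escaping $\eta H$ by genuinely exploiting both the atypical abundance of torsion on $V$ and the proper hypersurface nature of $V$ rather than a crude cardinality count.
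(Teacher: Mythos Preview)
Your reductions are sound: the Noetherian/irreducibility reduction, the quotient by the connected stabilizer, and the argument that $V$ must be a geometrically non-degenerate hypersurface (via induction on the ambient dimension applied to $\overline{\pi_J(V)}$) are all correct and in fact slightly slicker than the paper's route to the same endpoint.

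The gap is in the final step, and you have identified it yourself without resolving it. Applying Theorem~\ref{thm:main-two} to $V$ itself points the wrong way: it tells you that almost every $1$-dimensional torsion coset \emph{meets} $V$, whereas you need a supply of torsion cosets \emph{missing} $V$ and escaping the finite family $\mathcal{G}$. There is no evident mechanism for producing such an $\eta H$ from the density of torsion on $V$; a hypersurface can perfectly well meet every $1$-dimensional torsion coset outside a fixed finite family of proper subgroups, so the hoped-for contradiction does not materialise in $\mathbb{G}_{m,K}^n$.

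The paper's missing idea is to change the ambient torus. Write $V=\{f=0\}$ and pass to $V'=\{f(x_1,\dots,x_n)=x_{n+1}\}\subseteq\mathbb{G}_{m,K}^{n+1}$; one checks $V'$ has finite stabilizer and is therefore a geometrically non-degenerate hypersurface. Now each torsion point $\zeta\in V(K)$ yields the $1$-dimensional torsion coset $\{\zeta\}\times\mathbb{G}_{m,K}$, and this coset \emph{misses} $V'$ because $f(\zeta)=0$ is not a unit. Theorem~\ref{thm:main-two} applied to $V'$ then forces every such vertical line into one of finitely many proper algebraic subgroups of $\mathbb{G}_{m,K}^{n+1}$, hence every torsion point of $V$ lies in one of finitely many proper torsion cosets of $\mathbb{G}_{m,K}^n$; induction on $n$ handles each of these. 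This graph construction is exactly the ``transformation of an unlikely intersection into a likely non-intersection'' you were searching for.
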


\begin{proof}
Any subvariety of $\mathbb{G}^n_{m,K}$ is a finite union of irreducible subvarieties of $\mathbb{G}^n_{m,K}$, so we can assume that $V$ is irreducible.

Any irreducible subvariety $V$ of $\mathbb{G}^n_{m,K}$ is an intersection of finitely many irreducible hypersurfaces $V_1, \hdots, V_k$ in $\mathbb{G}^n_{m,K}$. Suppose that the statement of the proposition holds for every $V_i$. The set of torsion points that lie on $V$ is then contained in a finite union $T_i$ of torsion cosets that are all contained in $V_i$ ($i = 1,\hdots, k$). It follows that the set of torsion points that lie on $V$ is contained in $T := \bigcap_{i=1}^{k}{T_i}$. Since the intersection of two torsion cosets is a finite union of torsion cosets, $T$ is again a finite union of torsion cosets and $T$ is contained in $V$. Since every torsion coset contains a Zariski dense set of torsion points, the Zariski closure of the set of torsion points that lie on $V$ is equal to $T$ and we are done. We can and will therefore assume that $V$ is an irreducible hypersurface. It suffices to show that there is a finite list of torsion cosets contained in $V$ such that each torsion point that lies on $V$ belongs to one of the torsion cosets in the list.

We then induct on $n$, the case $n = 1$ being trivial.

If $n \geq 2$ and the stabilizer of $V$ is positive-dimensional, then we can find a surjective homomorphism $\varphi: \mathbb{G}^n_{m,K} \to \mathbb{G}^k_{m,K}$ for some $k \in \{1,\hdots,n-1\}$ such that $V' = \varphi(V)$ is closed in $\mathbb{G}^k_{m,K}$ and $V = \varphi^{-1}(V')$. We can then apply the inductive hypothesis for $V'$ and are done. Hence, we can assume that the stabilizer of $V$ is finite.

The hypersurface $V$ is defined by some polynomial equation
\[ f(x_1,\hdots,x_n) = 0.\]

We now consider the hypersurface $V'$ in $\mathbb{G}^{n+1}_{m,K}$ that is defined by the equation
\[ f(x_1,\hdots,x_n) = x_{n+1}.\]
Clearly, $V'$ is irreducible. We denote by $\pi: \mathbb{G}^{n+1}_{m,K} \to \mathbb{G}^{n}_{m,K}$ the projection to the first $n$ coordinates, then $\pi(V')$ is the complement of $V$. It follows that $\pi$ maps the stabilizer of $V'$ into the stabilizer of $V$, which is finite. Furthermore, if $(z_1,\hdots,z_{n+1})$ belongs to the stabilizer of $V'$, then
\[ z_{n+1} = f(z_1x_1,\hdots,z_nx_n)f(x_1,\hdots,x_n)^{-1},\]
where $(x_1,\hdots,x_n) \in (K^\times)^n$ is an arbitrary point at which $f$ does not vanish. Hence, the restriction of $\pi$ to the stabilizer of $V'$ is injective. We deduce that the stabilizer of $V'$ is finite and so $V'$ is geometrically non-degenerate.

Suppose now that $(\zeta_1,\hdots,\zeta_n) \in V(K)$ is a torsion point. It follows that the intersection of $V'$ with the torsion coset
\[ \{(\zeta_1,\hdots,\zeta_n)\} \times \mathbb{G}_{m,K}\]
is empty. Theorem \ref{thm:main-two} (or more precisely its version over $K$) now yields a finite set of proper torsion cosets in $\mathbb{G}^{n+1}_{m,K}$ such that $\{(\zeta_1,\hdots,\zeta_n)\} \times \mathbb{G}_{m,K}$ is contained in one of them or, equivalently, a finite set of proper torsion cosets in $\mathbb{G}^n_{m,K}$ such that $(\zeta_1,\hdots,\zeta_n)$ belongs to one of them.

Let $\zeta H$ be such a torsion coset, where $\zeta$ is a torsion point and $H$ is a proper subtorus of $\mathbb{G}^n_{m,K}$. If $\zeta H \subseteq V$, we add $\zeta H$ to our (finite) list of torsion cosets contained in $V$ and containing all torsion points that lie on $V$ and we are done. Otherwise, $\zeta H \cap V$ is a non-empty union of irreducible hypersurfaces inside $\zeta H$. Recalling that $H$ is isomorphic as an algebraic group to $\mathbb{G}^k_{m,K}$ for some $k \in \{1,\hdots,n-1\}$ and using that translation by $\zeta^{\pm 1}$ maps torsion cosets onto torsion cosets, we are then again done by induction.
\end{proof}

We next prove Corollary \ref{cor:curvecase}.

\begin{proof}[Proof of Corollary \ref{cor:curvecase}]
Suppose that $W(K) \cap z \cdot H(K) = \emptyset$ for some subtorus $H \subseteq \mathbb{G}^{n}_{m,K}$ of dimension $n-1$ and some $z \in (K^\times)^n$. By Theorem \ref{thm:main-acf}, we then have that $H$ is contained in and therefore equal to one of finitely many proper subtori. It remains to prove that there are only finitely many possibilities for the coset $z \cdot H$. But the restriction of the quotient homomorphism $\pi_H: \mathbb{G}^{n}_{m,K} \to \mathbb{G}^{n}_{m,K}/H$ to $W$ is not constant and hence it is dominant since its domain and its codomain are both curves. It follows that $\pi_H(W)$ is constructible and dense in $\mathbb{G}^n_{m,K}/H$ and therefore $(\mathbb{G}^n_{m,K}/H) \setminus \pi_H(W)$ is finite. It follows that $z \cdot H = \pi_H^{-1}(y)$ for one of finitely many $y \in (\mathbb{G}^n_{m,K}/H)(K)$ and we are done.
\end{proof}

We conclude with the proof of Theorem \ref{thm:probabilistic}.

\begin{proof}[Proof of Theorem \ref{thm:probabilistic}]
For a vector $a = (a_{i,j})_{i = 1,\hdots,d;~j = 1,\hdots,n} \in \mathbb{Z}^{nd}$, we define $H_a \subseteq \mathbb{G}^n_{m,K}$ by the system of equations
\[ \prod_{j=1}^{n}{x_j^{a_{i,j}}} = 1~(i = 1, \hdots, d).\]
Then, $H_a$ is an algebraic subgroup of $\mathbb{G}^n_{m,K}$. We denote by $H_a^0$ the connected component of $H_a$ that contains the neutral element.

If the vectors $a_i = (a_{i,j})_j \in \mathbb{Z}^n$ ($i = 1, \hdots, d$) are linearly independent, then for each $y = (y_1,\hdots,y_d) \in (K^\times)^d$, there exists $z = (z_1,\hdots,z_n) \in (K^\times)^n$ such that
\[ \prod_{j=1}^{n}{z_j^{a_{i,j}}} = y_i \quad (i = 1, \hdots, d).\]
We can then apply Theorem \ref{thm:main-acf} to the intersection $W(K) \cap z \cdot H_a^0(K)$. We find that this intersection is non-empty unless $H_a^0$ is contained in one of finitely many proper subtori (depending only on $W$) or, equivalently, one of finitely many non-zero vectors $b_1, \hdots, b_k \in \mathbb{Z}^n$ is contained in the vector subspace of $\mathbb{Q}^n$ that is generated by the $a_i$ ($i = 1,\hdots, d$).

Therefore, it is enough to show that, for a given $b \in \mathbb{Z}^n \backslash \{0\}$, we have that
\[ \lim_{N \to \infty}{\frac{\#\mathcal{T}(n,d,b,N)}{(2N+1)^{nd}}} = 0,\]
where $\mathcal{T}(n,d,b,N)$ is the set of tuples of vectors $(a_1,\hdots,a_d) \in \mathbb{Z}^{nd}$ such that their entries are bounded in absolute value by $N$ and the tuple $(a_1, \hdots, a_d, b)$ is linearly dependent.

Since $b = (b_1, \hdots, b_n) \neq 0$, there exists some $j_0$ such that $b_{j_0} \neq 0$. Given $(a_1,\hdots,a_d) \in \mathcal{T}(n,d,b,N)$, where $a_i = (a_{i,1},\hdots,a_{i,n})$ ($i = 1,\hdots,d$), we can then find $k \in \{1, \hdots, d\}$ and pairs of indices $(i_1,j_1), \hdots, (i_k,j_k)$ such that
\[
\#\{j_0, \hdots,j_k\} = \#\{i_1,\hdots,i_k\} + 1 = k + 1
\]
and
\[ \det \begin{pmatrix}
a_{i_1,j_0} & a_{i_1,j_1} & \hdots & a_{i_1,j_k}\\
\vdots &  &  & \vdots\\
a_{i_k,j_0} & a_{i_k,j_1} & \hdots & a_{i_k,j_k}\\
b_{j_0} & b_{j_1} & \hdots & b_{j_k}\\
\end{pmatrix} = 0,\]
but
\[ \det \begin{pmatrix}
a_{i_1,j_0} & a_{i_1,j_1} & \hdots & a_{i_1,j_{k-1}}\\
\vdots &  &  & \vdots\\
a_{i_{k-1},j_0} & a_{i_{k-1},j_1} & \hdots & a_{i_{k-1},j_{k-1}}\\
b_{j_0} & b_{j_1} & \hdots & b_{j_{k-1}}\\
\end{pmatrix} \neq 0.\]
By expanding the first determinant along the last column, we deduce from this that $a_{i_k,j_k}$ is uniquely determined by the other $a_{i,j}$ and so the number of possibilities for $(a_1,\hdots,a_d)$ is at most $(2N+1)^{nd-1}$. This suffices to conclude.
\end{proof}

\section*{Acknowledgements}
The first-named author thanks Francesco Campagna, who contributed to this project in its early stages and, together with the first-named author, proved Theorems \ref{thm:main} and \ref{thm:main-two} in the special case where $\dim W  = 1$, using a different method relying on the Mason-Stothers theorem, Martin Orr for pointing him towards results on likely intersections in $Y(1)^2$, and Thomas Scanlon for helpful discussions. The second-named author thanks Vahagn Aslanyan and Vincenzo Mantova for their suggestions on Theorem \ref{thm:main-acf}, and the Department of Mathematics of the University of Padova for their hospitality in Summer 2024 as this paper was being written. Both authors thank Harry Schmidt for suggesting this collaboration.

\vspace{\baselineskip}
\noindent
\framebox[\textwidth]{
	\begin{tabular*}{0.96\textwidth}{@{\extracolsep{\fill} }cp{0.84\textwidth}}
		\raisebox{-0.7\height}{
			\begin{tikzpicture}[y=0.80pt, x=0.8pt, yscale=-1, inner sep=0pt, outer sep=0pt, 
			scale=0.12]
			\definecolor{c003399}{RGB}{0,51,153}
			\definecolor{cffcc00}{RGB}{255,204,0}
			\begin{scope}[shift={(0,-872.36218)}]
			\path[shift={(0,872.36218)},fill=c003399,nonzero rule] (0.0000,0.0000) rectangle (270.0000,180.0000);
			\foreach \myshift in 
			{(0,812.36218), (0,932.36218), 
				(60.0,872.36218), (-60.0,872.36218), 
				(30.0,820.36218), (-30.0,820.36218),
				(30.0,924.36218), (-30.0,924.36218),
				(-52.0,842.36218), (52.0,842.36218), 
				(52.0,902.36218), (-52.0,902.36218)}
			\path[shift=\myshift,fill=cffcc00,nonzero rule] (135.0000,80.0000) -- (137.2453,86.9096) -- (144.5106,86.9098) -- (138.6330,91.1804) -- (140.8778,98.0902) -- (135.0000,93.8200) -- (129.1222,98.0902) -- (131.3670,91.1804) -- (125.4894,86.9098) -- (132.7547,86.9096) -- cycle;
			\end{scope}
			\end{tikzpicture}
		}
		&
		Gabriel Dill has received funding from the European Research Council (ERC) under the European Union's Horizon 2020 research and innovation programme (grant agreement n$^\circ$ 945714).
	\end{tabular*}
}
\smallskip

This material is based upon work supported by the National Science Foundation under Grant No. DMS--1928930 while the first-named author was in residence at the Mathematical Sciences Research Institute in Berkeley, California, during the Spring 2023 semester. The first-named author also thanks the DFG for its support (grant no. EXC-2047/1 - 390685813). The second-named author was partially supported by the program GeoMod ANR-19-CE40-0022-01 (ANR-DFG) while in Freiburg and by the PRIN 2022-Modelli, Insiemi e Classificazioni, prot. 2022TECZJA while in Pisa.

\printbibliography

\end{document}